\title{Notes on Harmonic maps}
\author{Georgios Daskalopoulos and Chikako Mese}
\date{June 2022}
\thanks{
GD supported in part by NSF DMS-2105226, CM supported in part by NSF DMS-2005406. 
We would like to thank Yitong Sun for carefully reading  this document and making useful suggestions to improve the exposition of this paper.}
\newcommand{\mres}{\mathbin{\vrule height 1.6ex depth 0pt width
0.13ex\vrule height 0.13ex depth 0pt width 1.3ex}}
\newcommand{\RR}{\mathbb{R}}
\newcommand{\CC}{\mathbb{C}}
\newcommand{\Sph}{\mathbf S}
\newcommand{\calM}{\mathcal M}
\newcommand{\mfld}{\mathcal{H}}
\newcommand{\Ric}{\mathrm{Ric}}
\newcommand{\dfn}[1]{\emph{#1}\index{#1}}
\newcommand{\loc}{\mathrm{loc}}
\newcommand{\parl}{\left(}
\newcommand{\parr}{\right)}
\newcommand{\bral}{\left\langle}
\newcommand{\brar}{\right\rangle}
\newtheorem{theorem}{Theorem}[section]
\newtheorem{lemma}[theorem]{Lemma}
\newtheorem{proposition}[theorem]{Proposition}
\newtheorem{corollary}[theorem]{Corollary}
\theoremstyle{definition}
\newtheorem{definition}[theorem]{Definition}
\newtheorem{remark}[theorem]{Remark}
\newtheorem{example}[theorem]{Example}
\newtheorem{examples}[theorem]{Examples}
\numberwithin{equation}{section}
\def\XXint#1#2#3{{\setbox0=\hbox{$#1{#2#3}{\int}$ }
\vcenter{\hbox{$#2#3$ }}\kern-.6\wd0}}
\begin{document}

\maketitle



This is  set of notes prepared for the Summer School on non-Abelian Hodge theory in Abbaye de Saint-Jacut de la Mer 
June, 6-19,  2022.

\vspace*{1in}

\begin{center}
{\sc Table of Contents}
\end{center}

\begin{itemize}
\item[] Lecture~\ref{sec:smooth harmonic maps}:  {\sc Harmonic Maps Between Riemannian Manifolds}   \hfill p.~\pageref{sec:smooth harmonic maps} 
\item[] Lecture~\ref{sec:NPC}: {\sc {Existence and regularity}}
 \hfill p.~\pageref{sec:Existence Results}
 \item[] Lecture~\ref{sec:siu}: {\sc Pluriharmonic Maps and the Siu-Sampson Formula} \hfill  p.~\pageref{sec:siu}
\item[] Lecture~\ref{sec:Donaldson}:  {\sc Donaldson Corlette Theorem} \hfill p.~\pageref{sec:Donaldson}
\end{itemize}

\newpage

\section{Harmonic Maps Between Riemannian manifolds}
\label{sec:smooth harmonic maps}

\subsection{Introduction: Basics}
In this section, we define energy of maps between Riemannian manifolds, harmonic maps, and the first and second variation formulas after the pioneering work of 
Eells-Sampson \cite{eells-sampson}.  A good reference is also  \cite{jost}.
 
\subsection{The energy of maps}
Let  $(M,g)$, $(N,h)$ be Riemannian manifolds.
Let  $f: M \to N$ be a smooth map which induces a map $df:TM \rightarrow TN$
$$df \parl \frac{\partial}{\partial x^\alpha} \parr \bigg|_p = \frac{\partial f^i}{\partial x^\alpha} \frac{\partial}{\partial y^i} \bigg|_{f(p)}
$$
where $(x^\alpha)$ (resp.~$(y^i)$) are the local coordinates of $M$ (resp.~$N$).

The map $f$ also induces a vector bundle $f^*TN$ over $M$. Let $\nabla$ be a connection on $f^*TN$ inherited from the Levi-Civita connection on $TN$.
Then $\nabla$ induces an exterior derivative
\[
d_\nabla:  C^{\infty}((\Lambda^p T^*M) \otimes  f^*TN) \rightarrow C^{\infty}((\Lambda^{p+1} T^*M) \otimes  f^*TN).
\]

We view $d f$ as a section 
$$
d f \in 
C^\infty(T^* M \otimes f^* TN)= \Omega^1(f^* TN).
$$
Using the notation
$$
\frac{\partial}{\partial f^i} := \frac{\partial}{\partial y^i} \circ f \in C^\infty_\loc(M, f^* TN),
$$
we have
$$d f = \frac{\partial f^i}{\partial x^\alpha} d x^\alpha \otimes \frac{\partial}{\partial f^i}.$$

Let  $(g_{\alpha\beta})$  (resp.~$(h_{ij})$) be the expression of the Riemannian metric $g$  of $M$ (resp.~ $h$ of $N$) with respect to local coordinates $(x^\alpha)$  (resp.~$(y^i)$).
\begin{definition}
Set 
$$e(f) := \frac{1}{2} |d f|^2 = \frac{1}{2} \frac{\partial f^i}{\partial x^\alpha} \frac{\partial f^j}{\partial x^\beta} g^{\alpha \beta} h_{ij} \circ f.$$
The \dfn{energy} of $f$ is 
$$E(f) := \int_M e(f) \star 1 = \frac{1}{2} \int_M g^{\alpha \beta}(x) h_{ij}(f(x)) \frac{\partial f^i}{\partial x^\alpha} \frac{\partial f^j}{\partial x^\beta} \sqrt{g(x)} \ d x^1 \wedge \cdots \wedge d x^n.$$
Here, recall that the {\it Hodge star operator} $\star:\Lambda^kTM\to \Lambda^{n-k}TM$, is the unique linear operator such that for all $\alpha,\beta\in \Lambda^{k}V$,
\[\alpha\wedge\star\beta = g(\alpha,\beta)\star1.\] 
\end{definition}

\begin{lemma} \label{lem:varvf}
Let $f=(f_t)$ be a  smooth one-parameter family of $C^\infty$ maps 
\[
f:M \times (-\epsilon,\epsilon) \rightarrow N, \ \ \  f(x,t)=f_t(x).
\]
Then
$$\nabla \frac{\partial f}{\partial t} = \nabla_{\partial/\partial t} d f$$
where $f = f_t$ and 
$$\frac{\partial f}{\partial t} = \frac{\partial f^i}{\partial t} \frac{\partial}{\partial f^i} \in C^\infty(f^*TN).$$
\end{lemma}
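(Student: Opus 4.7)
The plan is to verify the identity in local coordinates by computing both sides explicitly, relying only on two facts: the symmetry of mixed partial derivatives of $f^i(x,t)$, and the torsion-free (symmetry) property of the Levi-Civita connection, i.e. $\Gamma^k_{ij} = \Gamma^k_{ji}$.

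First, I fix local coordinates $(x^\alpha)$ on $M$ and $(y^i)$ on $N$, with Christoffel symbols $\Gamma^k_{ij}$ for $h$. The pullback connection $\nabla$ on $f^*TN$ then acts by
\[
\nabla_{\partial/\partial x^\alpha} \frac{\partial}{\partial f^i} = \frac{\partial f^j}{\partial x^\alpha}\, \Gamma^k_{ji}(f)\, \frac{\partial}{\partial f^k}, \qquad \nabla_{\partial/\partial t} \frac{\partial}{\partial f^i} = \frac{\partial f^j}{\partial t}\, \Gamma^k_{ji}(f)\, \frac{\partial}{\partial f^k}.
\]
Next, I expand the left-hand side by applying the Leibniz rule to $\frac{\partial f}{\partial t} = \frac{\partial f^i}{\partial t} \frac{\partial}{\partial f^i}$, obtaining
\[
\nabla_{\partial/\partial x^\alpha} \frac{\partial f}{\partial t} = \frac{\partial^2 f^i}{\partial x^\alpha \partial t} \frac{\partial}{\partial f^i} + \frac{\partial f^i}{\partial t} \frac{\partial f^j}{\partial x^\alpha} \Gamma^k_{ji}(f)\, \frac{\partial}{\partial f^k}.
\]
Then I expand the right-hand side, contracting $\nabla_{\partial/\partial t} df$ with $\partial/\partial x^\alpha$, which gives
\[
\frac{\partial^2 f^i}{\partial t \partial x^\alpha} \frac{\partial}{\partial f^i} + \frac{\partial f^i}{\partial x^\alpha} \frac{\partial f^j}{\partial t} \Gamma^k_{ji}(f)\, \frac{\partial}{\partial f^k}.
\]
Comparing the two expressions, the first terms agree by equality of mixed partials $\partial_t \partial_\alpha f^i = \partial_\alpha \partial_t f^i$, while the second terms agree after relabeling $i \leftrightarrow j$ and invoking $\Gamma^k_{ij} = \Gamma^k_{ji}$.

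I do not anticipate any genuine obstacle here; the content of the lemma is precisely the torsion-free property of the Levi-Civita connection, pulled back via $f$ viewed as a map out of the product $M \times (-\epsilon,\epsilon)$. If one preferred a coordinate-free argument, the cleanest framing would be to note that $[\partial/\partial x^\alpha, \partial/\partial t] = 0$ on $M \times (-\epsilon,\epsilon)$ and then apply the identity $\nabla_X df(Y) - \nabla_Y df(X) = df([X,Y])$ (symmetry of the pulled-back connection) with $X = \partial/\partial t$ and $Y = \partial/\partial x^\alpha$; but the coordinate computation above is the most direct route and requires no additional machinery.
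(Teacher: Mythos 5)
Your proof is correct: the coordinate expansions of both sides are right, and the comparison via equality of mixed partials together with $\Gamma^k_{ij}=\Gamma^k_{ji}$ closes the argument. The paper's own proof is exactly the coordinate-free variant you sketch in your last sentence: it regards $f$ as a map on $M\times(-\epsilon,\epsilon)$, notes that $\left[f_*(\partial/\partial t),\,f_*(\partial/\partial x^\alpha)\right]=f_*\left[\partial/\partial t,\,\partial/\partial x^\alpha\right]=0$, and invokes the torsion-freeness of the Levi-Civita connection on $N$. The two arguments use identical ingredients — symmetry of mixed partials playing the role of the vanishing bracket, and symmetry of the Christoffel symbols playing the role of torsion-freeness — so the only difference is presentation: your computation is more elementary and self-contained, while the invariant formulation makes the conceptual content (this is nothing but torsion-freeness pulled back through $f$) immediate and avoids checking that the comparison is coordinate-independent.
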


\begin{proof}
Both $\nabla \frac{\partial f}{\partial t}=\nabla_{{\partial}/{\partial x^\alpha}} \frac{\partial f}{\partial t} dx^\alpha$ and $\nabla_{\partial/\partial t} d f=\nabla_{\partial/\partial t} \frac{\partial f}{\partial x^\alpha} dx^\alpha $ are 1-forms with values in $f^*TN$.  Here, 
\[
\frac{\partial f}{\partial x^\alpha} = \frac{\partial f^j}{\partial x^\alpha} \frac{\partial}{\partial f^j} \in C^{\infty}(f^*TN).
\]
 Consider $f$ as a map $f:M \times (-\epsilon,\epsilon) \rightarrow N$.  Since
\[
 \left[f_*\left(\frac{\partial}{\partial t}\right), f_*\left( \frac{\partial}{\partial x^\alpha} \right) \right]  =f_*\left[\frac{\partial}{\partial t}, \frac{\partial}{\partial x^\alpha} \right] =0
\]
and $\nabla$ is torsion-free, 
\[
\nabla_{{\partial}/{\partial x^\alpha}} \frac{\partial f}{\partial t}= \left( \nabla_{f_*\left(\frac{\partial}{\partial x^\alpha}\right)} f_*\left(\frac{\partial}{\partial t} \right) \right)\circ f
=\left(\nabla_{f_*(\frac{\partial f}{\partial t})} f_*\left(\frac{\partial }{\partial x^\alpha} \right)  \right)\circ f 
=\nabla_{\partial/\partial t} d f(\frac{\partial}{\partial x^\alpha} )
\]
which proves the equality.

\end{proof}

\begin{corollary}[First Variation Formula]
For $(f_t)$ as above, 
$$\frac{d}{d t} E(f_t) = \int_M \bral \nabla \frac{\partial f}{\partial t}, d f\brar \star 1. $$
\end{corollary}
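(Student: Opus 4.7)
The plan is to differentiate under the integral sign and then apply Lemma~\ref{lem:varvf} to turn $\nabla_{\partial/\partial t}\,df$ into $\nabla\frac{\partial f}{\partial t}$. Since $M$ (and its volume form $\star 1$) does not depend on $t$, the derivative $\frac{d}{dt}$ commutes with $\int_M$, so it suffices to compute the pointwise derivative of the energy density $e(f_t) = \frac{1}{2}\,h\!\left(df_t,df_t\right)$, viewed as the squared norm of a section of $T^*M\otimes f_t^*TN$.

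First I would use that the connection $\nabla$ on $f^*TN$ (pulled back from Levi-Civita on $TN$) is compatible with $h$. Thus for any section $s$ of $T^*M\otimes f^*TN$ varying smoothly in $t$,
\[
\frac{\partial}{\partial t}\,|s|^2 = 2\,\langle \nabla_{\partial/\partial t}\,s,\,s\rangle.
\]
Applied to $s = df$ this gives
\[
\frac{\partial}{\partial t}\, e(f_t) = \langle \nabla_{\partial/\partial t}\,df,\,df\rangle.
\]

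Next I would invoke Lemma~\ref{lem:varvf}, which identifies $\nabla_{\partial/\partial t}\,df = \nabla\frac{\partial f}{\partial t}$ as 1-forms on $M$ with values in $f^*TN$. Substituting this into the previous display and integrating over $M$ against $\star 1$ yields
\[
\frac{d}{dt} E(f_t) = \int_M \bral \nabla\frac{\partial f}{\partial t},\, df \brar \star 1,
\]
as claimed.

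There is essentially no hard step: the only subtlety is justifying the differentiation under the integral (which is standard because $f$ is smooth in $(x,t)$ and, after restricting to a compactly supported variation or working locally, all integrands are uniformly bounded on compact sets) and noting that metric-compatibility of $\nabla$ really is what allows one to differentiate the norm squared in the way used above. Both are immediate from the setup.
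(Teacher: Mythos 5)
Your proposal is correct and follows essentially the same route as the paper: differentiate under the integral, use metric-compatibility of the pullback connection to obtain $\langle \nabla_{\partial/\partial t}\,df,\,df\rangle$, and then apply Lemma~\ref{lem:varvf} to replace $\nabla_{\partial/\partial t}\,df$ by $\nabla\frac{\partial f}{\partial t}$. The extra remarks on justifying differentiation under the integral are fine but not needed beyond what the paper already assumes.
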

\begin{proof}
We compute 
\begin{align*}
\frac{d}{d t} E(f_t) &= \frac{1}{2} \int_M \frac{d}{d t} \bral d f, d f \brar \star 1\\
&= \int_M \bral \nabla_{\partial/\partial t} d f, d f \brar \star 1\\
&= \int_M \bral \nabla \frac{\partial f}{\partial t}, d f \brar \star 1. \qedhere
\end{align*}
\end{proof}

\begin{corollary}
The critical points $f$ of the functional  $E$ satisfy
\begin{equation}\label{eqn:firstvargen}
\int_M\bral  \nabla \psi, df  \brar \star1=0, \ \ \ \forall \psi \in C^\infty(f^* TN).
\end{equation}
By taking $\psi$  compactly supported away from $\partial M$, we obtain the Euler Lagrange equation of $E$, 
\begin{equation}\label{eqn:firstvar3}
\tau(f):=-d_\nabla^{\star} df= \star d_\nabla \star df =0.
\end{equation}
Here, $\nabla$ is the pullback of the Levi-Civita connection on $f^*(TN)$. 
\end{corollary}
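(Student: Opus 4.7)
The plan is to deduce both assertions from the First Variation Formula just proved. My first task is to show that every $\psi\in C^\infty(f^*TN)$ actually arises as the initial velocity of a smooth one-parameter family $(f_t)$. For this I would use the exponential map of $(N,h)$ and set
\[
f_t(x)=\exp_{f(x)}^N\bigl(t\,\psi(x)\bigr),
\]
which is smooth for $|t|$ small, satisfies $f_0=f$, and has $\frac{\partial f}{\partial t}\big|_{t=0}=\psi$ as a section of $f^*TN$. (When $\psi$ has compact support in the interior, $f_t=f$ outside that support, so questions of boundary behavior do not arise.)

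With such a variation in hand, the critical-point hypothesis gives $\frac{d}{dt}\big|_{t=0}E(f_t)=0$, and the First Variation Formula immediately yields
\[
\int_M \bral \nabla\psi,\,df\brar\star 1=0,\qquad \forall\psi\in C^\infty(f^*TN),
\]
which is \eqref{eqn:firstvargen}.

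For the Euler--Lagrange equation I would restrict to $\psi\in C^\infty_{\mathrm{cpt}}(f^*TN)$ supported away from $\partial M$ and integrate by parts. By definition, $d_\nabla^\star$ is the formal $L^2$-adjoint of $d_\nabla$ on $f^*TN$-valued forms, so
\[
\int_M \bral \nabla\psi,\,df\brar\star 1 \;=\; \int_M \bral \psi,\, d_\nabla^\star\, df\brar\star 1,
\]
the boundary terms vanishing thanks to the support condition on $\psi$. Since this vanishes for every such $\psi$, the fundamental lemma of the calculus of variations forces $d_\nabla^\star df=0$ pointwise on the interior, and then by continuity on all of $M$. Finally, the Hodge-theoretic identity $d_\nabla^\star=-\star d_\nabla\star$ on $f^*TN$-valued $1$-forms (derived in the same way as for ordinary $p$-forms, the bundle coefficients playing a passive role because $\nabla$ is metric-compatible) gives the equivalent form $\tau(f)=\star d_\nabla\star df=0$.

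The routine points are the construction of $f_t$ and the sign check in the Hodge identity; the only place requiring care is the integration by parts, where one must verify that the $f^*TN$-valued analog of Stokes' theorem applies, i.e.\ that $d\bral \psi,\star\, df\brar = \bral \nabla\psi,df\brar\star 1 - \bral \psi,d_\nabla^\star df\brar\star 1$. This follows from the metric compatibility of $\nabla$ on $f^*TN$ (inherited from the Levi-Civita connection on $TN$) together with the ordinary Stokes' theorem, so no genuine obstacle is expected.
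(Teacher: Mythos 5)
Your proposal is correct and follows essentially the same route as the paper: take a variation $f_t$ with $\frac{\partial f}{\partial t}\big|_{t=0}=\psi$, apply the First Variation Formula to get \eqref{eqn:firstvargen}, then integrate by parts against compactly supported $\psi$ and invoke the fundamental lemma to obtain $\tau(f)=0$. The only difference is that you spell out details the paper leaves implicit (the exponential-map construction of $f_t$, the bundle-valued Stokes identity and the sign in $d_\nabla^\star=-\star d_\nabla\star$ on $1$-forms), all of which are handled correctly.
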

\begin{proof}
Let $f_t$ be a family of maps with 
$$\frac{d}{d t}\bigg|_{t = 0} f_t = \psi \in C^\infty(f^* TN).$$
Taking $\psi$ compactly supported and integrating by parts,
$$\frac{d}{d t}\bigg|_{t = 0} E(f_t) = \int_M \bral \psi, -\tau f \brar \star 1 = 0$$
which holds for every $\psi \in C_c^\infty(f^* TM)$ iff $\tau f = 0$. 
\end{proof}

\begin{definition}
A smooth map $f:M \rightarrow N$ satisfying $d_\nabla^\star df=0$ is  called a \emph{harmonic map}.
\end{definition}

\subsection{Harmonic map equations in local coordinates}
First define
$$\omega\left(\frac{\partial}{\partial y^i}\right) := \Gamma_{jk}^i d y^k \otimes \frac{\partial}{\partial y^j},$$
where $\Gamma_{jk}^i$ are Christoffel symbols on $N$ and set 
$$\tilde \omega := \omega \circ f = \left( \Gamma_{ij}^k d y^k \otimes  \frac{\partial}{\partial y^j} \right)\circ f
=( \Gamma_{ik}^j \circ f ) \frac{\partial f^k}{\partial x^\beta} d x^\beta \otimes \frac{\partial}{\partial f^j} 
.$$
Then  $d_\nabla=d+\tilde \omega$ and 
\begin{align*}
d_\nabla^\star d f &= -\star d_\nabla \star  d f \\
&= -\star  (d + \tilde \omega)\parl\star  d f^i \otimes \frac{\partial}{\partial f^i}\parr \\
&= -( \star  d \star  d f^i ) \frac{\partial}{\partial f^i} - (-1)^{m - 1} \star \parl\star  d f^i \otimes \tilde \omega\parl\frac{\partial}{\partial f^i}\parr\parr\\
&= -\Delta f^i  \ \frac{\partial}{\partial f^i} - (-1)^{m - 1} \star  \parl \frac{\partial f^i}{\partial x^\alpha} \star  d x^\alpha \wedge ( \Gamma_{ik}^j \circ f ) \frac{\partial f^k}{\partial x^\beta} d x^\beta \otimes \frac{\partial}{\partial f^j} \parr
\\
&= -\Delta f^i  \ \frac{\partial}{\partial f^i} - (-1)^{m - 1} \parl \frac{\partial f^i}{\partial x^\alpha} \frac{\partial f^k}{\partial x^\beta} \Gamma_{ik}^j \circ f  \star  (\star  d x^\alpha \wedge d x^\beta)  \otimes \frac{\partial}{\partial f^j} \parr \\
&= -\left( \Delta f^k   + g^{\alpha \beta} \frac{\partial f^i}{\partial x^\alpha} \frac{\partial f^j}{\partial x^\beta} \Gamma_{ij}^k \circ f  \right)\frac{\partial}{\partial f^k}.
\end{align*}
Thus the harmonic map equation is 
\begin{equation} \label{hme}
\Delta f^k + g^{\alpha \beta} \frac{\partial f^i}{\partial x^\alpha} \frac{\partial f^j}{\partial x^\beta} \Gamma_{ij}^k \circ f = 0.
\end{equation}

\begin{examples}
\hspace*{0.05in}

\noindent (1) Suppose $N=\RR$, then the harmonic map equation reduces to $\Delta f=0$; i.e.~$f$ is a harmonic function on $M$.
 
\noindent (2)
Suppose $M = \Sph^1$.
Then 
$$E(f) = \frac{1}{2} \int_0^{2\pi} |\dot f(t)| d t$$
and the critical points of $E(f)$ are geodesics.
We can also see this from the harmonic maps equation.
Since $\Sph^1$ is $1$-dimensional we can take $g^{\alpha \beta} = \delta^{\alpha \beta}$.
Then 
$$\frac{\partial^2 f^k}{\partial t^2} + \Gamma_{ij}^k \frac{\partial f^i}{\partial x^\alpha} \frac{\partial f^j}{\partial x^\beta} = 0.$$
This is the \emph{geodesic equation}.

\noindent (3) We'll show later that holomorphic maps between K\"ahler manifolds are harmonic (cf.~Remark~\ref{holimpleishar'}).
\end{examples}

\subsection{The Dirichlet and Neumann problems} If $M$ has non-empty boundary ($N$ is without boundary) there are two different boundary value problems to consider: 
\begin{itemize}
\item {\bf Dirichlet problem:} Minimize $E$ in a fixed homotopy class of maps from $M$ to $N$ relative to the boundary of $M$. This is equivalent to considering compactly supported variations $\psi$.
\item {\bf Neumann problem:} Minimize $E$ in a fixed free homotopy class of maps from $M$ to $N$, in other words no restriction on the type of variations.
\end{itemize}

\subsection{The second variation}
The Riemannian tensor 
$$R^N: TN \times TN \times TN \to TN$$
induces and operator
$$R^N: f^* TN \times f^* TN \times f^* TN \to f^* TN$$
in the natural way.

\begin{lemma}
Let $f_t: M \to N$, and let $V$ be a vector field along $f_t$. Then 
$$\nabla_{\partial/\partial t} \nabla V = \nabla \nabla_{\partial/\partial t} V - R^N \parl d f, \frac{\partial f}{\partial t} \parr V.$$
\end{lemma}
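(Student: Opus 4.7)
The plan is to view both sides as 1-forms on $M$ (with values in $f^*TN$) and verify the identity pointwise after contracting with $\partial/\partial x^\alpha$. Writing $\nabla V = \nabla_{\partial/\partial x^\alpha} V \, dx^\alpha$ and similarly for the other terms, the claim reduces to
$$\nabla_{\partial/\partial t} \nabla_{\partial/\partial x^\alpha} V - \nabla_{\partial/\partial x^\alpha} \nabla_{\partial/\partial t} V = -R^N\!\left( df\!\left(\tfrac{\partial}{\partial x^\alpha}\right),\tfrac{\partial f}{\partial t}\right)V,$$
which I recognize as the pullback of the defining identity
$$R^N(X,Y)Z = \nabla^N_X \nabla^N_Y Z - \nabla^N_Y \nabla^N_X Z - \nabla^N_{[X,Y]}Z$$
applied with $X = f_*(\partial/\partial t) = \partial f/\partial t$ and $Y = f_*(\partial/\partial x^\alpha) = df(\partial/\partial x^\alpha)$ (after antisymmetry of $R^N$). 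Since $[\partial/\partial t, \partial/\partial x^\alpha] = 0$ on $M\times(-\epsilon,\epsilon)$, the pushforward bracket vanishes and the third term disappears, yielding exactly the formula claimed.

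To make this rigorous without assuming $f$ is an immersion (so that $f_*$ of coordinate fields may fail to define vector fields on $N$), I would verify the identity in local coordinates. Expand $V = V^i(x,t)\,(\partial/\partial y^i)\circ f$ and apply the standard formula
$$\nabla_{\partial/\partial x^\alpha} V = \left(\tfrac{\partial V^i}{\partial x^\alpha} + (\Gamma^i_{jk}\circ f)\tfrac{\partial f^j}{\partial x^\alpha} V^k\right)\tfrac{\partial}{\partial f^i},$$
together with the analogous formula for $\nabla_{\partial/\partial t}V$. Iterating these expressions and subtracting, the pure second derivatives $\partial^2 V^i/\partial t\,\partial x^\alpha$ cancel, as do the terms of the form $\Gamma^i_{jk}\,(\partial^2 f^j/\partial t\,\partial x^\alpha)V^k$ by equality of mixed partials, and the terms linear in a single first derivative of $V$ also pair off. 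What survives is purely algebraic in the $V^k$ and collects into
$$\left(\partial_\ell \Gamma^i_{jk} - \partial_j \Gamma^i_{\ell k} + \Gamma^i_{\ell m}\Gamma^m_{jk} - \Gamma^i_{jm}\Gamma^m_{\ell k}\right)\circ f \cdot \tfrac{\partial f^\ell}{\partial t}\tfrac{\partial f^j}{\partial x^\alpha} V^k,$$
which is precisely the local expression for $R^N(\partial f/\partial t, df(\partial/\partial x^\alpha))V$.

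The main obstacle is book\-keeping: correctly matching all mixed Christoffel terms and signs, and ensuring that the antisymmetry $R^N(X,Y) = -R^N(Y,X)$ is applied so that the output matches the ordering $R^N(df, \partial f/\partial t)$ stated in the lemma. Once the local computation is carried out, the coordinate-free form follows from tensoriality in $V$, $df$, and $\partial f/\partial t$, so the identity descends from coordinate patches to a global statement on $f^*TN$.
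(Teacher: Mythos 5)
Your proposal is correct and takes essentially the same route as the paper: both sides are viewed as $f^*TN$-valued 1-forms, contracted with $\partial/\partial x^\alpha$, and the identity is reduced to the defining property of $R^N$ together with the vanishing of the bracket $[\partial/\partial t,\partial/\partial x^\alpha]$ on $M\times(-\epsilon,\epsilon)$. Your supplementary local-coordinate computation merely makes rigorous the paper's slightly informal use of $f_*$ applied to coordinate fields, and the surviving Christoffel combination you display is indeed the coordinate expression of $R^N(\partial f/\partial t, df(\partial/\partial x^\alpha))V$, matching the stated sign after antisymmetry.
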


\begin{proof}
Both 
\begin{eqnarray*}
\nabla_{\partial/\partial t} \nabla V - \nabla \nabla_{\partial/\partial t} V 
& = &
\left(  \nabla_{\partial/\partial t} \nabla_{\partial/\partial x^\alpha} V - \nabla_{\partial/\partial x^\alpha} \nabla_{\partial/\partial t}  V \right) dx^\alpha
\\
& = & \left( \left( \nabla_{f_*(\partial/\partial t)} \nabla_{f_*(\partial/\partial x^\alpha)} V - \nabla_{f_*(\partial/\partial x^\alpha)} \nabla_{f_*(\partial/\partial t)}\right) f_*  V\right) \circ f  \ dx^\alpha
\end{eqnarray*}
 and 
\[
R^N \parl d f, \frac{\partial f}{\partial t} \parr V=\left(R^N \parl f_*\left(\frac{\partial}{\partial x^\alpha}\right), f_*\left( \frac{\partial }{\partial t} \right) \parr f_*(V) \right) \circ f
\ dx^\alpha
\]
 are $1-$forms on $M$ with values in $f^*TN$.  Thus, assertion follows from the definition of the Riemannian tensor $R^N$.

\end{proof}

\begin{theorem}[Second Variation Formula] \label{secvar}
One has 
$$\frac{d^2}{d t^2} E(f_t) = ||\nabla \frac{\partial f}{\partial t}||^2 - \int_M \bral R^N \parl d f, \frac{\partial f}{\partial t} \parr \frac{\partial f}{\partial t}, d f \brar \star 1 + \int_M \bral \nabla_{\partial/\partial t} \frac{\partial f}{\partial t}, \tau f \brar \star 1.$$
\end{theorem}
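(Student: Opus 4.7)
The plan is to differentiate the First Variation Formula once more in $t$ and then rearrange using the two preceding lemmas together with one integration by parts. Differentiating under the integral sign and exploiting the metric compatibility of the pulled-back connection $\nabla$, one obtains
\begin{equation*}
\frac{d^2}{dt^2} E(f_t) = \int_M \bral \nabla_{\partial/\partial t} \nabla \frac{\partial f}{\partial t}, df \brar \star 1 + \int_M \bral \nabla \frac{\partial f}{\partial t}, \nabla_{\partial/\partial t} df \brar \star 1.
\end{equation*}

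The second integral is handled directly by Lemma~\ref{lem:varvf}, which allows one to replace $\nabla_{\partial/\partial t} df$ by $\nabla \frac{\partial f}{\partial t}$; this produces the term $\|\nabla \frac{\partial f}{\partial t}\|^2$. For the first integral, I would apply the commutator lemma just proved (with $V=\frac{\partial f}{\partial t}$) to write
\begin{equation*}
\nabla_{\partial/\partial t} \nabla \frac{\partial f}{\partial t} = \nabla \nabla_{\partial/\partial t} \frac{\partial f}{\partial t} - R^N\!\left( df, \frac{\partial f}{\partial t} \right)\frac{\partial f}{\partial t}.
\end{equation*}
Pairing the curvature piece with $df$ and integrating yields exactly the curvature term appearing in the formula. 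The remaining contribution $\int_M \bral \nabla \nabla_{\partial/\partial t} \frac{\partial f}{\partial t}, df \brar \star 1$ is treated by the same adjoint pairing that underlies the First Variation Formula: taking $\psi = \nabla_{\partial/\partial t} \frac{\partial f}{\partial t}$ and integrating by parts, one obtains an integrand involving $\bral \psi, \tau f \brar$, which is the last term in the formula.

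The main point requiring care here is not the mechanics of the calculation but the bookkeeping of signs: one must check that the curvature sign is consistent with the convention fixed in the commutator lemma, and then track the sign coming from the definition $\tau f = -d_\nabla^\star df$ so that the $\tau f$ integral appears with precisely the stated sign. Implicit in the integration-by-parts step is the assumption that the variation is compactly supported away from $\partial M$ (or that $\partial M = \emptyset$), so that no boundary term arises; otherwise, a boundary integral would have to be appended to the formula. Beyond these bookkeeping issues the argument is a straightforward chain of substitutions requiring no new geometric input.
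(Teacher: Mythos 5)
Your proposal is correct and follows essentially the same route as the paper: differentiate the first variation formula, use metric compatibility of $\nabla$, replace $\nabla_{\partial/\partial t}\,df$ by $\nabla\frac{\partial f}{\partial t}$ via Lemma~\ref{lem:varvf}, and commute derivatives with the curvature lemma, the only difference being that you make explicit the final adjoint/integration-by-parts step producing the $\tau f$ term (with its boundary caveat), which the paper leaves implicit.
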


\begin{proof}
We compute 
\begin{align*}
\frac{d^2}{d t^2} E(f_t) &= \int_M \frac{d}{d t} \bral \nabla \frac{\partial f}{\partial t}, d f \brar \star 1 \\
&= \int_M \parl \bral \nabla_{\partial/\partial t} \nabla \frac{\partial f}{\partial t}, d f \brar + \bral \nabla \frac{\partial f}{\partial t}, \nabla_{\partial/\partial t} d f \brar \parr \star 1 \\
&= \int_M \parl \bral \nabla \nabla_{\partial/\partial t} \frac{\partial f}{\partial t}, d f \brar - \bral R^N \parl d f, \frac{\partial f}{\partial t} \parr \frac{\partial f}{\partial t}, d f \brar + ||\nabla \frac{\partial f}{\partial t}||^2 \parr \star 1. \qedhere
\end{align*}
\end{proof}

\section{Existence and regularity}
\label{sec:NPC}

\subsection{Introduction:  Non-positive curvature}
In this section, we examine the role of non-positive curvature  of the target metric on harmonic maps.  We show  uniqueness and discuss regularity.  We also study the equivariant problem and prove existence of equivariant harmonic maps into non-positively curved metric spaces.  Some references are \cite{schoen}, \cite{korevaar-schoen1}, \cite{korevaar-schoen2} and \cite{gromov-schoen}.

\subsection{Second variation formula and non-positive curvature}

The following are corollaries of Theorem~\ref{secvar}.

\begin{corollary}
If $N$ has $\leq 0$ sectional curvature and $f_t$ is a geodesic interpolation, then $E(f_t)$ is convex.
\end{corollary}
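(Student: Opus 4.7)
The plan is to apply the Second Variation Formula (Theorem~\ref{secvar}) and verify that each of its three terms has the right sign. The formula reads
$$\frac{d^2}{dt^2} E(f_t) = \bigl\|\nabla \tfrac{\partial f}{\partial t}\bigr\|^2 - \int_M \bral R^N\bigl(df, \tfrac{\partial f}{\partial t}\bigr) \tfrac{\partial f}{\partial t}, df \brar \star 1 + \int_M \bral \nabla_{\partial/\partial t} \tfrac{\partial f}{\partial t}, \tau f \brar \star 1.$$

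First, I would unpack the phrase \emph{geodesic interpolation} to mean that for each fixed $x \in M$, the curve $t \mapsto f_t(x)$ is a geodesic of $N$. This gives $\nabla_{\partial/\partial t} \tfrac{\partial f}{\partial t} = 0$ pointwise, so the third integral vanishes identically; note that this does not use harmonicity of $f$. The first term is manifestly $\geq 0$.

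The substance of the argument is therefore concentrated in the curvature term, and this is where I expect sign conventions to be the main obstacle. Set $V := \tfrac{\partial f}{\partial t}$ and expand the inner product in local coordinates on $M$ as $g^{\alpha\beta} \bral R^N(\partial_\alpha f, V) V, \partial_\beta f \brar$. Diagonalizing $g^{\alpha\beta}$ at a point reduces the question to the sign of $\bral R^N(X, V) V, X \brar$ for individual tangent vectors $X \in T_{f(x)} N$. With the curvature sign convention underlying the formula above (traceable through the lemma preceding Theorem~\ref{secvar}), this quantity equals the sectional curvature of the $2$-plane spanned by $X$ and $V$ multiplied by the non-negative factor $|X|^2 |V|^2 - \bral X, V \brar^2$, and is therefore $\leq 0$ under the hypothesis $K^N \leq 0$. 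The minus sign in front of the curvature integral then makes its contribution $\geq 0$.

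Summing the three observations yields $\frac{d^2}{dt^2} E(f_t) \geq 0$ for all $t$, which is precisely convexity of $t \mapsto E(f_t)$.
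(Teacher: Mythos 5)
Your argument is correct and coincides with the paper's own proof: the geodesic interpolation kills the last term of the second variation formula, and the remaining two terms are non-negative (the curvature term because $K^N \leq 0$). The paper states this in one line; you have merely spelled out the sign check on $\bral R^N(X,V)V, X\brar$ explicitly.
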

\begin{proof}
In the second variation  formula the last term vanishes and the others are $\geq 0$.
\end{proof}

\begin{corollary}
Let $f, \phi: M \to N$ be homotopic with $f|\partial M = \phi|\partial M$.
If $N$ has $\leq 0$ sectional curvature and $f$ is harmonic, then
$$E(f) \leq E(\phi).$$
\end{corollary}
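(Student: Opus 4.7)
The plan is to exploit the previous corollary by constructing a geodesic interpolation between $f$ and $\phi$, then using harmonicity of $f$ to kill the first variation.

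First I would pass to the universal cover $\tilde N$ of $N$. Since $N$ has nonpositive sectional curvature, $\tilde N$ is a Cartan--Hadamard manifold, so any two points are joined by a unique geodesic depending smoothly on the endpoints. The homotopy from $f$ to $\phi$ relative to $\partial M$ lifts to a homotopy of maps $\tilde f, \tilde \phi: \tilde M \to \tilde N$ that are equivariant with respect to the relevant deck transformations and agree on $\partial \tilde M$. I would then define the geodesic interpolation $\tilde f_t(x)$ to be the point at parameter $t$ along the unique geodesic in $\tilde N$ from $\tilde f(x)$ to $\tilde \phi(x)$. By equivariance this descends to a smooth family $f_t: M \to N$ with $f_0 = f$ and $f_1 = \phi$, and because $\tilde f = \tilde \phi$ on $\partial \tilde M$ the geodesics there are constant, so $f_t|_{\partial M} = f|_{\partial M}$ for all $t$.

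The previous corollary applies to $f_t$, giving that $t \mapsto E(f_t)$ is convex on $[0,1]$. Next I would invoke the first variation formula: since $\partial f_t/\partial t$ vanishes on $\partial M$, integration by parts as in the derivation of \eqref{eqn:firstvar3} yields
\begin{equation*}
\frac{d}{dt}\bigg|_{t=0} E(f_t) = -\int_M \bigl\langle \tfrac{\partial f}{\partial t}\big|_{t=0},\, \tau f \bigr\rangle \star 1,
\end{equation*}
with no boundary contribution. Harmonicity of $f$ makes $\tau f = 0$, so this derivative vanishes. Combining vanishing of the derivative at $t=0$ with convexity of $E(f_t)$, we conclude $E(f_t) \geq E(f_0) = E(f)$ for every $t \in [0,1]$; evaluating at $t = 1$ gives $E(\phi) \geq E(f)$.

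The only genuine subtlety is the lift-and-descend step: one must check that the geodesic interpolation in $\tilde N$ is invariant under the covering group action, which uses that the chosen homotopy between $f$ and $\phi$ provides a compatible lift (so $\tilde f$ and $\tilde \phi$ are equivariant with respect to the same representation $\rho:\pi_1 M \to \pi_1 N$), together with the uniqueness of geodesics in $\tilde N$. The remainder is essentially the mechanical combination of the first and second variation formulas already established.
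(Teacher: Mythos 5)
Your proposal is correct and follows essentially the same route as the paper: take the geodesic homotopy $f_t$ from $f$ to $\phi$, note $E(f_t)$ is convex by the previous corollary, and use harmonicity plus the vanishing of $\partial f_t/\partial t$ on $\partial M$ to get $E'(0)=0$, whence $E(\phi)=E(1)\ge E(0)=E(f)$. The extra care you take in constructing the geodesic interpolation on the universal cover (equivariance and uniqueness of geodesics in the Cartan--Hadamard manifold $\tilde N$) is a detail the paper leaves implicit, and it is handled correctly.
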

\begin{proof}
Let $f_t$ be a geodesic homotopy between $f, \phi$, thus $f_0 = f$, $f_1 = \phi$. Then $E(t) = E(f_t)$ is convex, and $E'(0) = 0$. So $E(1) \geq E(0)$, hence $E(\phi) \geq E(f)$.
\end{proof}

\begin{corollary}
If $f_0, f_1:M \rightarrow N$ are homotopic harmonic maps with $f_0|\partial M = f_1|\partial M$ and $N$ has   $\leq 0$ sectional curvature, then:
\begin{enumerate}
\item If $\partial M$ is nonempty, then $f_0 = f_1$.
\item If $\partial M$ is empty, $F$ is a geodesic homotopy  between $f_0, f_1$ and $N$ has  sectional curvature $< 0$ at one point $p$ in the image of $F$, then either $f_0 = f_1$ or the rank of $f_0$ is $\leq 1$.
\end{enumerate}
\end{corollary}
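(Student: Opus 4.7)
The plan is to interpolate $f_0$ and $f_1$ by the geodesic homotopy $F\colon M\times[0,1]\to N$, $F(x,t)=\gamma_x(t)$, where $\gamma_x$ is the geodesic from $f_0(x)$ to $f_1(x)$, and to extract rigidity from the requirement that the energy be constant along this homotopy. Setting $V:=\partial f/\partial t\in C^\infty(F^*TN)$, the previous corollary gives convexity of $E(t):=E(f_t)$. Harmonicity of $f_0$ and $f_1$ combined with the first variation formula (the boundary term is well-defined because $V\equiv 0$ on $\partial M\times[0,1]$ in case (1) and there is no boundary in case (2)) yields $E'(0)=E'(1)=0$; convexity then forces $E'(t)\equiv 0$, hence $E''(t)\equiv 0$ on $[0,1]$. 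In the second variation formula of Theorem~\ref{secvar}, the term $\int_M \bral \nabla_{\partial/\partial t}V,\tau f\brar\star 1$ vanishes since $\nabla_{\partial/\partial t}V=0$ along geodesics, the first term $\|\nabla V\|^2$ is $\geq 0$, and $-\int_M \bral R^N(df,V)V,df\brar\star 1\geq 0$ because NPC makes the integrand $\leq 0$ pointwise. The vanishing of $E''$ thus forces both nonnegative quantities to vanish separately: $\nabla V\equiv 0$ on $M$ for every $t$, and the integrand $\bral R^N(df(e_\alpha),V)V,df(e_\alpha)\brar$ vanishes pointwise on $M\times[0,1]$ for every $\alpha$.

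For case (1), the boundary condition gives $V\equiv 0$ on $\partial M\times[0,1]$, and then the parallel section $V(\cdot,t)$ must vanish on all of the connected manifold $M$. Hence $\dot\gamma_x(t)\equiv 0$, each $\gamma_x$ is constant, and $f_0\equiv f_1$.

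For case (2), assume $f_0\neq f_1$, so $V\not\equiv 0$; being parallel, $V$ is nowhere zero. Combined with $\nabla_{\partial/\partial t}V=0$, this makes $V$ a parallel, nowhere vanishing, section of $F^*TN$ over the full product $M\times[0,1]$. Pick $(x_0,t_0)$ with $F(x_0,t_0)=p$. Strict negativity of sectional curvature at $p$ together with the pointwise curvature identity forces $df_{t_0}(e_\alpha)(x_0)\in\spn(V(x_0,t_0))$ for every $\alpha$. By Lemma~\ref{lem:varvf}, $\nabla_{\partial/\partial t}df_t(e_\alpha)=\nabla_{e_\alpha}V=0$, so $df_t(e_\alpha)(x)$ is the parallel transport of $df_0(e_\alpha)(x)$ along $\gamma_x$; the same holds for $V$. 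Since parallel transport preserves the linear dependence relation, $df_0(e_\alpha)(x_0)\in\spn(V(x_0,0))$, whence $\rank(df_0)_{x_0}\leq 1$. Because sectional curvature stays strictly negative on a neighborhood of $p$ in $N$, the same argument gives $\rank(df_0)_x\leq 1$ on $\pi_M(F^{-1}(U))$, an open neighborhood of $x_0$ in $M$. Real analyticity of the harmonic map $f_0$ (valid when $N$ is real analytic) then propagates the vanishing of the $2\times 2$ minors of $df_0$ to all of $M$, giving $\rank f_0\leq 1$ globally.

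The main technical point is the last propagation step: the pointwise curvature identity only directly constrains $df_0$ near the point $p$ of strict negativity, and passing from an open subset to all of $M$ relies on unique continuation/real analyticity for the elliptic system $\tau(f)=0$. Everything else is a direct consequence of the convexity of the energy along geodesic homotopies combined with dissection of the three terms in Theorem~\ref{secvar}.
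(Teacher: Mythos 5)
Up to the last step your proof is the paper's: geodesic homotopy, convexity of $E(t)$, $E'\equiv E''\equiv 0$, the second variation forcing $\nabla(\partial F/\partial t)=0$ together with the pointwise vanishing of the curvature term, case (1) from the boundary condition, and, in case (2), the rank bound at the point $x_0$ with $F(x_0,t_0)=p$ obtained by transporting along the $t$-geodesics via Lemma~\ref{lem:varvf}. The one divergence is the passage ``from $x_0$ to all of $M$'': the paper dismisses it with the unproved phrase that $df$ is parallel to $\partial F/\partial t$ ``at $p$ and therefore everywhere'', and you rightly single this out as the real content of (2); however, your fix via real analyticity of $f_0$ silently assumes the metrics of $M$ and $N$ are real analytic, which is not among the hypotheses, so as written your final step proves a weaker statement than the one claimed. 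The propagation can be done in the smooth category: with $V=\partial F/\partial t$ of constant nonzero norm, $W:=V/|V|$ is a parallel section of $F^*TN$, and the $f_0^*TN$-valued $1$-form $\sigma:=df_0-\langle df_0,W\rangle\otimes W$ satisfies $d_\nabla\sigma=0$ and $d_\nabla^\star\sigma=0$ (because $d_\nabla df_0=0$, $d_\nabla^\star df_0=0$ and $\nabla W=0$); the Weitzenb\"ock formula then expresses $\nabla^\star\nabla\sigma$ as a zeroth-order curvature term in $\sigma$, and Aronszajn's unique continuation theorem for such elliptic systems upgrades the vanishing of $\sigma$ on the neighborhood of $x_0$ you constructed (your argument in fact shows $df_0$ lies in $\spn(W)$ there, not merely $\rank df_0\le 1$) to $\sigma\equiv 0$ on the connected manifold $M$, i.e.\ $\rank df_0\le 1$ everywhere. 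With that replacement (or an appeal to unique continuation results for harmonic maps in the smooth category) your proof is complete, and it has the merit of making explicit a step the paper only asserts.
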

\begin{proof}
(1) Let $f_t$ be a geodesic homotopy between $f_0, f_1$, $E(t) = E(f_t)$. Then $E$ is convex.  Since $E'(0) = E'(1) = 0$, we conclude  $E' (t)= 0=E''(t)$. By Theorem~\ref{secvar},
$$\nabla \frac{\partial F}{\partial t} = 0$$
and 
$$\bral R^N \parl d f, \frac{\partial f}{\partial t} \parr \frac{\partial f}{\partial t}, d f \brar = 0.$$
Thus, 
$$\frac{\partial}{\partial x^\alpha} ||\frac{\partial F}{\partial t}||^2 = 2 \bral \nabla_{\partial/\partial x^\alpha} \frac{\partial F}{\partial t}, \frac{\partial F}{\partial t} \brar = 0$$
which implies that $||\partial F/\partial t||$ is constant.
But $\partial F/\partial t = 0$ on $\partial M$, so $\partial F/\partial t = 0$ everywhere if $\partial M$ is nonempty and hence $f_0 = f_1$.

(2) If $||\partial F/\partial t|| = 0$, then $f_0 = f_1$.
Otherwise, $\partial F/\partial t \neq 0$ for every $x, t$. The negative sectional curvature at $p$ implies  $d f$ is parallel to $\partial F/\partial t$ at $p$ and therefore everywhere.  Thus,  the image of $d f$ has dimension $\leq 1$.
\end{proof}

\subsection{The Weitzenb\"ock formula}
\begin{theorem}
Let $f: M \to N$ be a harmonic map and $(e_\alpha)$ an orthonormal frame for $TM$. Then 
\begin{align*}
\Delta e(f) &= |\nabla d f|^2 + \frac{1}{2} \bral d f (\Ric^M(e_\alpha)), d f (e_\alpha) \brar \\
&\qquad - \frac{1}{2} \bral R^N(df (e_\alpha), d f (e_\beta)) d f (e_\beta), d f (e_\alpha) \brar.
\end{align*}
\end{theorem}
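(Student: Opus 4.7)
The plan is to execute a Bochner--Weitzenb\"ock computation on the energy density. Fix a point $p \in M$ and choose a local orthonormal frame $(e_\alpha)$ adapted at $p$ in the sense that $\nabla_{e_\beta}e_\alpha|_p = 0$ for all $\alpha,\beta$; one obtains this by parallel transporting an orthonormal basis of $T_pM$ along the radial geodesics emanating from $p$. Then near $p$,
$$e(f) = \tfrac{1}{2}\bral df(e_\alpha), df(e_\alpha) \brar_{f^*TN}$$
with repeated indices summed, and the Laplacian at $p$ reduces to the sum of pure second derivatives along the $e_\beta$'s. Using metric compatibility of $\nabla$ on $f^*TN$ and differentiating twice, at $p$ one obtains the standard Bochner identity for sections of a Riemannian bundle,
$$\Delta e(f)\big|_p = |\nabla df|^2\big|_p + \bral \nabla_{e_\beta}\nabla_{e_\beta} df(e_\alpha), df(e_\alpha) \brar \big|_p.$$

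The heart of the proof is identifying the trace $\sum_\beta \nabla_{e_\beta}\nabla_{e_\beta} df(e_\alpha)$. The first ingredient is the symmetry $\nabla_X df(Y) = \nabla_Y df(X)$, which follows from Lemma~\ref{lem:varvf} applied to a two-parameter family (equivalently, from the torsion-free property of $\nabla$ pulled back along $f$ together with $[e_\alpha,e_\beta]=0$ for coordinate vector fields). Using this symmetry, $\nabla_{e_\beta}\nabla_{e_\beta} df(e_\alpha) = \nabla_{e_\beta}\nabla_{e_\alpha} df(e_\beta)$ at $p$. The second ingredient is the Ricci identity for sections of $T^*M \otimes f^*TN$: the curvature of the tensor product connection acts as a derivation, combining the pullback curvature $R^{f^*TN}(X,Y) = R^N(df(X), df(Y))$ on the $f^*TN$ factor with the dual of $R^M$ on the $T^*M$ factor. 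Commuting covariant derivatives then gives
$$\nabla_{e_\beta}\nabla_{e_\alpha} df(e_\beta) = \nabla_{e_\alpha}\nabla_{e_\beta}df(e_\beta) - df\bigl( R^M(e_\beta,e_\alpha)e_\beta \bigr) + R^N\bigl(df(e_\beta), df(e_\alpha)\bigr) df(e_\beta).$$

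The harmonic map equation now does its job: summed over $\beta$, the first term on the right equals $\nabla_{e_\alpha}\tau(f)$, which vanishes identically. The trace of the $R^M$ contribution produces (up to a sign absorbed by the dualization) the Ricci endomorphism $\Ric^M(e_\alpha)$ paired into $df$, and the trace of the last contribution produces the quartic curvature term. Pairing with $df(e_\alpha)$, summing over $\alpha$, and reindexing the $R^N$ term by means of the algebraic symmetries $\langle R^N(X,Y)Z,W\rangle = -\langle R^N(Y,X)Z,W\rangle$ and $\langle R^N(X,Y)Z,W\rangle = \langle R^N(Z,W)X,Y\rangle$ rewrites the curvature term in the form $\bral R^N(df(e_\alpha), df(e_\beta)) df(e_\beta), df(e_\alpha) \brar$ appearing on the right-hand side of the theorem.

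The main bookkeeping obstacle is keeping the signs and normalizations straight: the dual action of $R^M$ on $T^*M$ carries a sign, the choice of convention for the Ricci endomorphism introduces another, and the algebraic symmetries of $R^N$ must be applied carefully to match the stated form of the curvature term. None of these points is conceptually subtle; the content of the proof is the single exchange of two covariant derivatives via the Ricci identity, with harmonicity eliminating the otherwise present $\nabla\tau(f)$ trace.
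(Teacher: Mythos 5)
Your argument is correct in structure, and it is a genuinely different route from the one taken in the paper. You run the invariant Bochner computation: frame parallel at a point, metric compatibility of the pullback connection, symmetry of $\nabla df$, the Ricci commutation identity for the tensor-product connection on $T^*M \otimes f^*TN$ (with $R^{f^*TN}(X,Y)=R^N(df(X),df(Y))$), and harmonicity killing the $\nabla_{e_\alpha}\tau(f)$ trace. The paper instead works entirely in local coordinates: it writes the harmonic map equation in coordinates, differentiates it once in normal coordinates centered at $x$ and $f(x)$, and computes $\Delta\bigl(\tfrac12 g^{\alpha\beta}h_{ij}\circ f\, f^i_{/\alpha}f^j_{/\beta}\bigr)$ directly, reading off $\Ric^M$ and $R^N$ from second derivatives of the metric tensors. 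Your approach buys conceptual transparency (the curvature terms appear exactly where derivatives are exchanged, and no Taylor expansion of the metrics is needed); the paper's buys self-containedness at this point of the notes, since it only uses the coordinate form \eqref{hme} of the harmonic map equation and no machinery about curvature of induced connections. One caveat: track your constants to the end. Your commutation identity, summed and paired as you describe, yields the curvature terms with coefficient $1$, i.e.\ $\Delta e(f)=|\nabla df|^2+\bral df(\Ric^M(e_\alpha)),df(e_\alpha)\brar-\bral R^N(df(e_\alpha),df(e_\beta))df(e_\beta),df(e_\alpha)\brar$, which is the classical Eells--Sampson normalization (and is confirmed by the scalar test case $N=\RR$, where the statement must reduce to the usual Bochner formula $\tfrac12\Delta|\nabla f|^2=|\Hess f|^2+\Ric(\nabla f,\nabla f)$ for harmonic functions). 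So do not try to manufacture the factors $\tfrac12$ appearing in the displayed statement from your computation; with the paper's stated conventions for $\Ric^M$ and $e(f)$ they do not arise, and your derivation as written gives the correct identity.
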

\begin{proof}
Expanding out the Laplacian with respect to local coordinates, the harmonic map equation (\ref{hme}) is
$$g^{\alpha \beta} f^i_{/\alpha\beta} - g^{\alpha\beta} \, {}^M \Gamma_{\alpha \beta}^\eta f^i_{/\eta} + g^{\alpha\beta} \, {}^N \Gamma_{k\ell}^i \circ f f^k_{/\alpha} f^\ell_{/\beta}=0.$$
We use normal coordinates at  $x \in M$ and $f(x) = y$.   Thus, the metric tensors $(g_{\alpha \beta})$ and $(h_{ij})$  are Euclidean up to first order at $x$ and $y$ respectively.  Differentiating,
\begin{align*}
f^i_{/\alpha \alpha \varepsilon} &= {}^M \Gamma_{\alpha\alpha/\varepsilon}^\eta f^i_{/\eta} - {}^N \Gamma_{k\ell/m}^i f^m_{/\varepsilon} f^k_{/\alpha} f^\ell_{/\alpha} \\
&= \frac{1}{2} (g_{\alpha \eta/\alpha \varepsilon} + g_{\alpha \eta/\alpha \varepsilon} - g_{\alpha \alpha/\eta \varepsilon}) f^i_{/\eta} \\
&\qquad - \frac{1}{2} (h_{ki/\ell m} + h_{\ell i/km} - h_{k\ell/im}) f^m_{/\varepsilon} f^k_{/\alpha} f^\ell_{/\alpha}.
\end{align*}
Furthermore,
\[
g^{\alpha \beta}_{/\epsilon \epsilon}=-g_{\alpha \beta/\epsilon \epsilon}
\]
and
\[
\triangle h_{ij}(f(x))=h_{ij/kl} f^k_{/\epsilon} f^k_{/\epsilon}.
\]
Thus,
\begin{align*}
\Delta \parl \frac{1}{2} g^{\alpha \beta} h_{ij} \circ f f^i_{/\alpha} f^j_{/\beta} \parr &= \frac{1}{\sqrt g} \frac{\partial}{\partial x^\sigma} \parl \sqrt g g^{\sigma \tau} \frac{\partial}{\partial x^\tau} \parl \frac{1}{2} g^{\alpha \beta} h_{ij} \circ f f^i_{/\alpha} f^j_{/\beta} \parr \parr \\
&= f^i_{/\alpha \sigma} f^i_{/\alpha \sigma} - \frac{1}{2} (g_{\alpha \beta/\sigma \sigma} + g_{\sigma\sigma/\alpha \beta} - g_{\sigma\alpha/\sigma \beta} - g_{\sigma \alpha/\sigma \beta}) f^i_{/\alpha} f^i_{/\beta} \\
&\qquad + \frac{1}{2} (h_{ij/k\ell} + h_{k\ell/ji} - h_{ik/j\ell} - h_{j\ell/ik}) f^i_{/\alpha} f^j_{/\alpha} f^k_{/\sigma} f^\ell_{/\sigma} \\
&= f^i_{/\alpha \sigma} f^i_{/\alpha \sigma} + \frac{1}{2} \Ric^M_{\alpha \beta} f^i_{/\alpha} f^j_{/\beta} - \frac{1}{2} R^N_{ikj\ell} f^i_{/\alpha} f^j_{/\alpha} f^k_{/\sigma} f^\ell_{/\sigma}. \qedhere
\end{align*}
Here, 
\[
\Ric^M_{\alpha \beta}=g^{\delta \epsilon} R^M_{\alpha \delta \beta \epsilon}
\]
 is the Ricci tensor.
\end{proof}


\subsection{Regularity}

Assume $N$ has $\leq 0$ sectional curvature. Then 
from the Weitzenb\"ock formula, 
\begin{equation} \label{schauder estimate prep}
\Delta e(f)  \geq  -Ce(f)
\end{equation} 
where $C$ depends only on the geometry of $M$.
\begin{theorem}
If $f: M \to N$ is harmonic, and $N$ has $\leq 0$ sectional curvature, then 
$$|f|_{C^{2 + \alpha}_{loc}} \leq c$$
where $c > 0$ depends on $E(f)$ and the geometries of $M, N$.
\end{theorem}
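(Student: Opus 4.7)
The plan is to combine the differential inequality \eqref{schauder estimate prep} with the harmonic map equation \eqref{hme} and bootstrap via standard elliptic theory. The argument has two stages: first, deduce an $L^\infty_{loc}$ bound on the energy density $e(f)$ from the total energy $E(f)$; second, feed this bound into \eqref{hme} and iterate Calder\'on--Zygmund and Schauder estimates to gain two derivatives of H\"older regularity.

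For the first stage, I would apply Moser's iteration (equivalently, the mean value inequality for subsolutions of linear elliptic equations) to the scalar function $u := e(f)$. By \eqref{schauder estimate prep}, $u$ satisfies a linear elliptic differential inequality with a coefficient $C$ depending only on the local geometry of $M$. For any relatively compact open $K$ in $M$ and any ball $B_{2r}(x_0)$ with $\overline{B_{2r}(x_0)} \subset K$, Moser's iteration yields
\[
\sup_{B_r(x_0)} e(f) \;\leq\; C(r,K) \int_{B_{2r}(x_0)} e(f)\, \star 1 \;\leq\; C(r,K)\, E(f),
\]
which gives a local $L^\infty$ bound on $|df|$ in terms of the total energy and the geometry of $M$.

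For the second stage, rewrite \eqref{hme} in local coordinates as
\[
\Delta f^k \;=\; -\, g^{\alpha\beta}(x)\, \Gamma^k_{ij}(f(x))\, \frac{\partial f^i}{\partial x^\alpha}\, \frac{\partial f^j}{\partial x^\beta}.
\]
Since $|df|$ is now locally bounded and the Christoffel symbols $\Gamma^k_{ij}$ are smooth on $N$, the right-hand side lies in $L^\infty_{loc}$. Interior $L^p$-theory (Calder\'on--Zygmund) then gives $f \in W^{2,p}_{loc}$ for every $p < \infty$, hence $f \in C^{1,\alpha}_{loc}$ for every $\alpha \in (0,1)$ by Morrey's embedding. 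The right-hand side of the displayed equation is then itself in $C^{0,\alpha}_{loc}$, and Schauder estimates give $f \in C^{2,\alpha}_{loc}$ with the claimed quantitative dependence on $E(f)$ and the geometries of $M$ and $N$. One may iterate further to obtain $f \in C^\infty$, but this is not needed for the stated bound.

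The main obstacle is the first stage: extracting a pointwise bound on $e(f)$ from the integral bound $E(f)$ via \eqref{schauder estimate prep}. The non-positive curvature of $N$ enters decisively here, as it is what allows the $R^N$-term in the Weitzenb\"ock formula to be discarded as a nonnegative contribution, leaving only the $|\nabla df|^2$ and Ricci terms. The Ricci term is bounded below by $-C\, e(f)$ whenever $\Ric^M$ is bounded below on $K$, which is built into ``the geometry of $M$.'' Once this mean value estimate is in hand, the elliptic bootstrap is entirely routine.
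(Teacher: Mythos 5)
Your proposal is correct and follows essentially the same route as the paper: Moser iteration applied to the subsolution inequality $\Delta e(f) \geq -Ce(f)$ (valid because the nonpositive curvature of $N$ kills the bad term in the Weitzenb\"ock formula) gives the local sup bound on $e(f)$ in terms of $E(f)$, and then one bootstraps through the harmonic map equation \eqref{hme} via $L^p$ and Schauder estimates to reach $C^{2+\alpha}_{loc}$. Your explicit $W^{2,p}$--Morrey intermediate step is just a careful spelling-out of the paper's terse ``by elliptic regularity, $f$ is $C^{1+\alpha}$-bounded.''
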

\begin{proof}
By (\ref{schauder estimate prep}) and  Moser iteration, 
\begin{equation}\label{moser}
\sup_{B_r(p)} e(f) \leq c \int_{B_{2r}(p)} e(f) \star1= E(f)
\end{equation}
where $c$ only depends on the geometry and $r$.
Now the right-hand side of (\ref{schauder estimate prep}) is $C^0$-bounded. So by elliptic regularity, $f_i$ is $C^{1 + \alpha}$-bounded.
But then the right-hand side of (\ref{schauder estimate prep}) is $C^\alpha$-bounded, so $f_i$ is $C^{2 + \alpha}$-bounded.
\end{proof}

\begin{corollary}
If $f: M \to N$ is harmonic, and $N$ has $\leq 0$ sectional curvature, then $f \in C^\infty(M, N)$.
\end{corollary}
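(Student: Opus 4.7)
The plan is to bootstrap from the $C^{2+\alpha}$ regularity established in the preceding theorem by repeatedly applying interior Schauder estimates to the harmonic map equation in local coordinates. Since both $M$ and $N$ are smooth, I may work in small coordinate charts on $M$ and on $N$ (choosing the charts so that the image of the chart on $M$ lies in a single chart on $N$, which is possible by the $C^0$ continuity of $f$), and treat the harmonic map equation (\ref{hme}), namely
\[
\Delta f^k + g^{\alpha\beta} \frac{\partial f^i}{\partial x^\alpha} \frac{\partial f^j}{\partial x^\beta} \Gamma_{ij}^k \circ f = 0,
\]
as a semilinear elliptic system for the component functions $f^k$. Here $g^{\alpha\beta}$ and the components of the Laplace--Beltrami operator are smooth functions of $x$, and $\Gamma_{ij}^k$ are smooth functions of $y \in N$.

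The iteration proceeds as follows. Suppose inductively that $f \in C^{k+\alpha}_{\loc}$ for some $k \geq 2$. Then $\partial f^i/\partial x^\alpha \in C^{k-1+\alpha}_{\loc}$, while $\Gamma_{ij}^k \circ f \in C^{k+\alpha}_{\loc}$ by the chain rule (since $\Gamma_{ij}^k$ is smooth on $N$ and $f \in C^{k+\alpha}_{\loc}$). Consequently the nonlinear term
\[
F^k := -g^{\alpha\beta} \frac{\partial f^i}{\partial x^\alpha} \frac{\partial f^j}{\partial x^\beta} \Gamma_{ij}^k \circ f
\]
lies in $C^{k-1+\alpha}_{\loc}$. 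Interior Schauder estimates applied to $\Delta f^k = F^k$, regarded as a linear elliptic equation for $f^k$ with a Hölder continuous right-hand side and smooth principal part, then give $f^k \in C^{k+1+\alpha}_{\loc}$. Since the base case $k=2$ is furnished by the preceding theorem, this improves the regularity by one derivative at each step.

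Iterating this procedure indefinitely shows that $f \in C^{k+\alpha}_{\loc}(M, N)$ for every $k \geq 2$, which is exactly the statement that $f \in C^\infty(M,N)$. There is no real obstacle here: the nonpositive curvature hypothesis was only needed to obtain the initial Weitzenböck-type bound and the $C^{2+\alpha}$ starting point, and once that is in hand the argument is a standard elliptic bootstrap that works for any semilinear equation whose nonlinearity is smooth in $f$ and polynomial in $\nabla f$. The mildly delicate point, which deserves a sentence of explanation, is simply checking at each stage that composition with the smooth Christoffel symbols $\Gamma_{ij}^k$ does not lose Hölder regularity — a direct consequence of the composition properties of $C^{k+\alpha}$ functions.
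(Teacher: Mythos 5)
Your proposal is correct and is essentially the paper's argument: the paper's one-line proof (``keep bootstrapping'') refers to exactly this Schauder bootstrap on the harmonic map equation (\ref{hme}), starting from the $C^{2+\alpha}_{\loc}$ bound of the preceding theorem, and you have simply written out the induction, including the (correct) observation that composition with the smooth Christoffel symbols preserves $C^{k+\alpha}$ regularity.
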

\begin{proof}
Keep bootstrapping with (\ref{schauder estimate prep}).
\end{proof}

\begin{theorem}
If $f:M \rightarrow N$ is a harmonic map, $M$ is compact with   Ricci curvature $\geq 0$  and $N$ has sectional curvature $\leq 0$, then $f$ is totally geodesic.
\end{theorem}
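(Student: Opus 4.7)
The plan is to apply the Weitzenb\"ock formula from the previous subsection and exploit the two curvature sign hypotheses to conclude that the energy density is subharmonic; compactness of $M$ then forces $\nabla df \equiv 0$, which is the infinitesimal characterization of $f$ being totally geodesic.

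More precisely, I would start by recalling the Weitzenb\"ock identity for a harmonic map $f:M\to N$:
\[
\Delta e(f) = |\nabla df|^2 + \frac{1}{2}\bral df(\Ric^M(e_\alpha)), df(e_\alpha)\brar - \frac{1}{2}\bral R^N(df(e_\alpha),df(e_\beta))df(e_\beta), df(e_\alpha)\brar.
\]
Next I would estimate the two curvature terms using the hypotheses. Since $\Ric^M \ge 0$, the quadratic form $v\mapsto \bral df(\Ric^M(v)), df(v)\brar$ is nonnegative, so the middle term on the right is $\ge 0$. Since $N$ has sectional curvature $\le 0$, each summand $\bral R^N(df(e_\alpha),df(e_\beta))df(e_\beta), df(e_\alpha)\brar$ is $\le 0$ (it is essentially the sectional curvature in $N$ of the plane spanned by $df(e_\alpha)$ and $df(e_\beta)$, times the square of the area, summed), so the minus sign makes the last term $\ge 0$ as well. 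Combining these gives
\[
\Delta e(f) \ge |\nabla df|^2 \ge 0.
\]

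Now I would use compactness of $M$: integrating this inequality over $M$, the left-hand side vanishes by the divergence theorem (the Laplacian of a function on a compact manifold without boundary integrates to zero), forcing $|\nabla df|^2 \equiv 0$ pointwise. Equivalently, by the strong maximum principle, $e(f)$ is subharmonic on a compact manifold and hence constant, so again both sides of the Weitzenb\"ock identity vanish. Either way I get $\nabla df = 0$, and by definition a smooth map with vanishing second fundamental form $\nabla df$ is totally geodesic, which is the desired conclusion.

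I do not expect a serious obstacle here: the proof is a direct application of the Weitzenb\"ock formula already established, with the only bookkeeping being to verify the sign conventions so that both curvature contributions land on the right side of the inequality. The mildly delicate point, worth stating carefully, is that the tensor $\nabla df \in C^\infty(T^*M\otimes T^*M\otimes f^*TN)$ is exactly the second fundamental form of the map $f$, so its vanishing is equivalent to $f$ being totally geodesic in the usual sense (geodesics of $M$ are mapped to geodesics of $N$, up to reparametrization, and in fact with constant speed since $|df|^2 = 2e(f)$ is constant).
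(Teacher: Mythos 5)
Your proposal is correct and follows essentially the same route as the paper: integrate the Weitzenb\"ock formula over the closed manifold $M$, observe that $\int_M \Delta e(f)\star 1 = 0$ while the hypotheses $\Ric^M \geq 0$ and $K^N \leq 0$ make every term on the right-hand side non-negative, and conclude $\nabla df = 0$. Your additional remarks on the pointwise subharmonicity of $e(f)$ and on $\nabla df$ being the second fundamental form are consistent with, and slightly more detailed than, the paper's argument.
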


\begin{proof}
Since
\begin{align*}
0 = \int_M \triangle e(f) \star1
& = \int_M \left( |\nabla df|^2 +  \frac{1}{2} \bral d f (\Ric^M(e_\alpha)), d f (e_\alpha) \brar  \right. \\
&\qquad  \left. - \frac{1}{2} \bral R^N(df (e_\alpha), d f (e_\beta)) d f (e_\beta), d f (e_\alpha) \brar \right) \star 1,
\end{align*}
and each of the terms on the right hand side is non-negative, we have
$$ \nabla df=0.$$
\end{proof}

\label{sec:Existence Results}
\subsection{Non-positive curvature in a metric space}
A complete metric space $(X,d)$ is called an NPC space
  if the following conditions are satisfied:\\
\\
(i)  The space $(X,d)$ is a length (or geodesic) space.  That is, for any two
points $P$ and $Q$ in $X$, there exists a rectifiable curve $c$ so
that the length of $c$ is equal to $d(P,Q)$ (which we will
sometimes denote by $d_{PQ}$ for simplicity).  We call such
distance realizing curves geodesics.\\
\\
(ii)   For any geodesic triangle with vertices $P,R,Q \in X$, let $c:[0,l]
\rightarrow X$ be the arclength parameterized geodesic from $Q$ to
$R$ and let $Q_{t}=c(tl)$.  Then
\begin{equation} \label{tricomp}
d_{PQ_t}^2 \leq (1-t) d^2_{PQ}+td^2_{PR}-t(1-t)d_{QR}^2.
\end{equation}
\\
(iii) Condition (ii) implies the quadralateral comparison inequalities (cf. \cite[Corollary 2.1.3]{korevaar-schoen1})
\begin{align}
d^2_{P_tQ_t} & \leq  (1-t)d^2_{PQ}+td^2_{RS}-t(1-t)(d_{SP}-d_{QR})^2 \label{menelaus}
\\
 \label{agamemnon}
d_{Q_tP}^2+d^2_{Q_{1-t}S} &\leq d^2_{PQ}+d^2_{RS}-td^2_{QR}-2td_{SP}d_{QR}+2t^2d^2_{QR}
\end{align}

\begin{example}The main examples we will be considering are Riemannian manifolds of non-positive curvature and (locally compact) Euclidean  buildings.
\end{example}

\begin{example} \label{l2maps}
Let $(X,d)$ be an NPC space, $P \in X$ and $M$ be a compact Riemannian manifold.  Let  $Y=L^2(M,X)$ be a set of maps $f:M \rightarrow X$ such that 
\[
\int_M d^2(f,P) \star 1 <\infty.
\]
Define a distance function $d_Y$ on $Y$ by setting
\[
d_Y^2(f_0,f_1)=\int_M d^2(f_0(x), f_1(x)) \star 1.
\]
Then $(Y,d_Y)$ is an NPC space (cf.~\cite[ Lemma 2.1.2]{korevaar-schoen1}) where the geodesic between $f_0$ and $f_1$ is the geodesic interpolation map $f_t(x)=(1-t)f_0(x)+tf_1(x)$.
\end{example}
\subsection{Local existence} We solve the Dirichlet problem for a smooth Riemannian domain  $B \subset M$. We will  motivate the construction by  first considering  the case $X=\RR$ (cf. \cite [Section 2.2]{korevaar-schoen1}).
Fix $\phi \in H^1(B, X)$ and consider the space
\[
H^1_\phi(B, X)=\{f \in  H^1(B, X): f-\phi \in H^1_0(B, X)     \}
\]
Let 
\[
E_0=\inf \{ E(f): f \in  H^1_\phi(B, X) \}.
\]
By the parallelogram identity
\[
2\int_B |d\frac{f+v}{2}|^2\star1+2\int_B |d\frac{f-v}{2}|^2\star1=\int_B |df|^2\star1+\int_B |dv|^2\star1
\]
Take a minimizing sequence $f_i$ and apply the previous equality for $f=f_i$, $v=f_j$.
This implies that 
\begin{eqnarray*}
2\int_B |d\frac{f_i-f_j}{2}|^2\star1
&=& \int_B |df_i|^2\star1+\int_B |df_j|^2\star1-2\int_B |d\frac{f_i+f_j}{2}|^2\star1 \\
&\leq& 2E_0+2\epsilon_i-2E_0=2\epsilon_i.
\end{eqnarray*}
Hence
\begin{eqnarray}\label{ineqconvuoi}
\lim \int_B |d\frac{f_i-f_j}{2}|^2\star1=0.
\end{eqnarray}
By the Poincare inequality 
\begin{eqnarray}
\lim \int_B |\frac{f_i-f_j}{2}|^2\star1=0
\end{eqnarray}
hence
\[
\lim_{i \rightarrow \infty} f_i=f \ \mbox{in} \  H^1_\phi(B, X) \ \mbox{and} \ E(f)=E_0.
\]

Now assume $X$ is an NPC space.  Korevaar-Schoen \cite{korevaar-schoen1} showed that the energy density  makes sense by taking difference quotients.  For the purpose of these lectures, if $X$ is a locally finite Euclidean building, then we can locally isometricaly embed it in a Euclidean space of  high dimension. Then, we can define the energy density of the map to the building equal to the energy density of the map considered as a map to the Euclidean space. In fact, this was the original point of view taken in \cite{gromov-schoen}. The more general theory developed later  in \cite{korevaar-schoen1} and \cite{korevaar-schoen2}.

With this, 
we argue as above replacing  the parallelogram identity by the quadrilateral inequality.  Indeed, for $f,v \in H^1_\phi(B,X)$,  define $w(x)=(1-t)f(x)+tv(x)$.  Then (\ref{menelaus}) with $t=\frac{1}{2}$ implies 
\begin{align*}
2 d^2(w(x), w(y)) &\leq d^2(f(x),f(y))+ d^2(v(x),v(y))-\frac{1}{2} (d(f(y),v(y))-d(f(x),v(x))^2
\end{align*}
which then implies 
\[
2E^w \leq E^f+E^v- \frac{1}{2} \int_B |\nabla d(f,v)|^2\star1
\]
Take a minimizing sequence $f_i$ and apply the previous inequality with $f=f_i$ and $v=v_i$ to conclude (cf.~(\ref{ineqconvuoi}))
\[
\lim_{i,j \rightarrow \infty} \int_B |\nabla d(f_i,f_j)|^2  \star1\rightarrow 0.
\]
By the Poincare inequality, $f_i$ is a Cauchy sequence in $(Y,d_Y)$  and converges to a map  which is minimizing by the lower semicontinuity of energy \cite[Theorem 1.6.1]{korevaar-schoen1}.

\subsection{Basic Regularity result of Gromov-Schoen and Korevaar-Schoen}

This is the analogue of (\ref{moser}) without using the PDE.
\begin{theorem}\label{lipschitz} If $f \in H^1(B, X)$ is a harmonic map, then $f$ is locally Lipschitz.  More precisely,  for any  $B' \subset \subset B$, there exists a constant $C$ only depending on the metric on $B'$ and the distance of $B'$ to $\partial B$  such that
\[
\sup_{B'}|df|^2 \leq c \int_B |df|^2\star1.
\]
\end{theorem}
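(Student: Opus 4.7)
The goal is to establish the analogue of (\ref{moser}) when the target is only an NPC space, so we cannot use a Weitzenb\"ock-type identity. The plan is to recover a weak subharmonic inequality for the energy density directly from the NPC comparison inequalities and the variational characterization of harmonicity, and then feed it into Moser iteration.

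\textbf{Step 1 (approximate density).} Work with the Korevaar--Schoen difference quotients
\[
e_\epsilon(f)(x) \;=\; \int_{\partial B_\epsilon(x)} \frac{d^2(f(y),f(x))}{\epsilon^2}\, d\sigma(y),
\]
which by \cite{korevaar-schoen1} converge weakly as Radon measures to $|df|^2 \star 1$, so that it suffices to produce bounds that pass to the limit $\epsilon\to 0$.

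\textbf{Step 2 (weak subharmonicity).} Fix a smooth nonnegative cutoff $\eta$ supported in $B$, and compare $f$ with its geodesic interpolation to comparison maps obtained by replacing $f$ on small balls by constants or by the values at a neighboring point. Applying the quadrilateral inequality (\ref{menelaus}) with $t=1/2$ to such comparisons and invoking energy minimality of $f$, one controls the second differences of $d^2(f(x),f(y))$ and deduces, after letting $\epsilon\to 0$, the distributional inequality
\[
\int_B |df|^2\,\Delta \eta\, \star 1 \;\geq\; -C \int_B |df|^2\, \eta \,\star 1
\]
for every nonnegative $\eta\in C^\infty_c(B)$, where $C$ depends only on the geometry of $B$. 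This is the exact analogue of (\ref{schauder estimate prep}) obtained without reference to the smooth structure of the target.

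\textbf{Step 3 (Moser iteration).} With the subharmonic inequality in hand, set $u=|df|^2$, and for $p\geq 1$ test against $\eta^2 u^{p-1}$. The Caccioppoli-type inequality that results, combined with the Sobolev inequality on $M$, produces the reverse-H\"older iteration $\|u\|_{L^{\kappa p}(B')}\leq (C/(r-r'))^{a/p}\|u\|_{L^p(B'')}$ with $\kappa>1$. Iterating over a nested sequence of balls between $B'$ and $B$ yields
\[
\sup_{B'} |df|^2 \;\leq\; c\int_B |df|^2 \star 1,
\]
and then the Lipschitz bound for $f$ itself follows by integrating $|df|$ along geodesics in $B'$ and using $d(f(x),f(y))\leq \int_\gamma |df|$, which is justified in the NPC setting by the Korevaar--Schoen characterization of the energy density.

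\textbf{Main obstacle.} The crux is Step 2: producing a clean subharmonic inequality for $|df|^2$ without a Levi-Civita connection on $X$. All curvature-type control must come from the NPC triangle/quadrilateral comparisons (\ref{tricomp})--(\ref{agamemnon}), applied along geodesic homotopies between $f$ and suitably chosen competitors; the negative curvature contribution in the usual Weitzenb\"ock term is replaced here by the nonpositivity encoded in (\ref{menelaus}). Once this inequality is established at the level of difference quotients and passed to the limit via the weak convergence of $e_\epsilon(f)$, the Moser iteration in Step 3 is standard.
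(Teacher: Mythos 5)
The paper itself does not prove Theorem~\ref{lipschitz}; it cites Gromov--Schoen and Korevaar--Schoen. Measured against those proofs (and on its own terms), your proposal has a genuine gap exactly where you flag the ``main obstacle'': Step 2 is asserted, not proved, and it is the entire content of the theorem. You claim that applying (\ref{menelaus}) with $t=1/2$ to competitors obtained by ``replacing $f$ on small balls by constants or by the values at a neighboring point'' controls second differences of $d^2(f(x),f(y))$ and yields the distributional inequality $\int_B |df|^2\,\Delta\eta\,\star 1 \geq -C\int_B |df|^2\,\eta\,\star 1$. No mechanism is given for how minimality plus the quadrilateral inequality produces a \emph{second-order} inequality for the density: the quadrilateral comparisons give first-variation/convexity information about distances between pairs of maps, not a Bochner-type estimate for $|df|^2$. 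Indeed, a weak Eells--Sampson/Bochner inequality for the energy density of harmonic maps into NPC spaces is not something one can extract at this level of generality by the sketched comparisons; it was established only much later (e.g.\ by Zhang--Zhu and Freidin) by substantially harder arguments, and neither Gromov--Schoen nor Korevaar--Schoen proves Lipschitz regularity this way. So as written, Steps 1 and 3 are standard scaffolding hanging on an unproved core.

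The route actually used in the cited sources avoids any subharmonicity of $|df|^2$: from (\ref{menelaus}) and energy minimality (insert the geodesic interpolation of two harmonic maps as a competitor) one shows that $x\mapsto d(u_1(x),u_2(x))$ is weakly subharmonic for two harmonic maps $u_1,u_2$; applying this with $u_2=u\circ\tau_h$ for small translations (or almost-isometries) $\tau_h$ of the domain, the function $x\mapsto d(u(x),u(\tau_h x))$ is (almost) subharmonic, so the mean value inequality bounds its sup on $B'$ by its $L^2$ norm on $B$, which is in turn controlled by $|h|^2\int_B|df|^2\star 1$ via the directional energies; dividing by $|h|^2$ and letting $h\to 0$ gives $\sup_{B'}|df|^2\leq c\int_B|df|^2\star 1$. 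If you want to salvage your outline, replace Step 2 by this pairwise-distance subharmonicity argument (curvature of the domain metric entering only as lower-order errors in the almost-subharmonicity), rather than attempting a Bochner inequality for the density, which is neither needed nor available by these means.
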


\subsection{Equivariant maps}
Let $\rho: \pi_1(M) \rightarrow \mathsf{Isom}(X)$ be a homomorphism.
A map
\[
v: \tilde M \rightarrow X
\]
is called a $\rho$-equivariant map, if
\[
v(\gamma x)=\rho(\gamma)v(x).
\]
 Since $|dv|^2$ is $ \pi_1(M)$-invariant,  it descends to a function on $M$. Define:
\[
E(v)=\int_M |d  v|^2\star1.
\]
If $v$ descends to a map to $M/\rho(\pi_1(M))$ this agrees with our previous definition.

\subsection{Existence of $\rho$-equivariant locally Lipschitz maps}
Let $(\mathcal M, \nu)$ be a probability space, $X$ an NPC-space and $f \in L^2(\mathcal M, X)$.

\begin{lemma} \label{com}
There exists a unique point $Q_{f,\nu}$ that   minimizes the integral 
\[
I_{f, \nu}(Q):=\int_{\mathcal M}d^2 (f(m), Q) d\nu(m)\ \ \forall Q \in X.
\]
We call $Q_{f,\nu}$ the center of mass.
\end{lemma}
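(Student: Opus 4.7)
The plan is to follow the standard argument for NPC centers of mass, using the triangle comparison inequality (\ref{tricomp}) at $t=\tfrac12$ applied pointwise and then integrated.

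First I would verify that $I_{f,\nu}$ is finite at some point and hence has a finite infimum. For any fixed $P \in X$, the condition $f \in L^2(\mathcal M, X)$ gives $I_{f,\nu}(P) < \infty$, and for any other $Q$ the triangle inequality $d(f(m),Q) \le d(f(m),P) + d(P,Q)$ together with $\nu(\mathcal M)=1$ shows $I_{f,\nu}(Q) < \infty$; so $I_0 := \inf_{Q \in X} I_{f,\nu}(Q) \in [0,\infty)$.

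Next, existence via a minimizing-sequence argument. Take $Q_n \in X$ with $I_{f,\nu}(Q_n) \to I_0$, and for each pair $n,m$ let $M_{nm}$ denote the midpoint of the geodesic from $Q_n$ to $Q_m$. Applying (\ref{tricomp}) to the triangle with vertices $f(m), Q_n, Q_m$ at $t=\tfrac12$ gives, pointwise in $m \in \mathcal M$,
\[
d^2(f(m), M_{nm}) \le \tfrac12 d^2(f(m), Q_n) + \tfrac12 d^2(f(m), Q_m) - \tfrac14 d^2(Q_n, Q_m).
\]
Integrating against $\nu$ and using $I_{f,\nu}(M_{nm}) \ge I_0$ yields
\[
\tfrac14 d^2(Q_n,Q_m) \le \tfrac12 I_{f,\nu}(Q_n) + \tfrac12 I_{f,\nu}(Q_m) - I_0 \longrightarrow 0,
\]
so $\{Q_n\}$ is Cauchy in $X$. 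By completeness it converges to some $Q_{f,\nu} \in X$, and since $|d(f(m),Q_n) - d(f(m), Q_{f,\nu})| \le d(Q_n, Q_{f,\nu})$, dominated convergence (dominating by $2d^2(f(m),Q_1) + C$) gives $I_{f,\nu}(Q_n) \to I_{f,\nu}(Q_{f,\nu})$, so $I_{f,\nu}(Q_{f,\nu}) = I_0$.

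For uniqueness, suppose $Q, Q' \in X$ both achieve $I_0$ and let $M$ be their midpoint. The same pointwise application of (\ref{tricomp}), integrated, gives
\[
I_0 \le I_{f,\nu}(M) \le \tfrac12 I_0 + \tfrac12 I_0 - \tfrac14 d^2(Q,Q') = I_0 - \tfrac14 d^2(Q,Q'),
\]
forcing $d(Q,Q')=0$. The main (mild) obstacle is the interchange of infimum and integration in the existence step: the argument works because NPC convexity is a pointwise inequality and we apply it before integrating, so nothing more than the probability normalization $\nu(\mathcal M)=1$ and completeness of $X$ is needed; the rest is a clean adaptation of the parallelogram-law argument already used in the $L^2$-minimizing sequence construction above.
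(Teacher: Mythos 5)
Your proposal is correct and follows essentially the same argument as the paper: take a minimizing sequence, apply the NPC triangle comparison (\ref{tricomp}) at $t=\tfrac12$ to midpoints pointwise, integrate, and conclude the sequence is Cauchy, with uniqueness by the same midpoint inequality. You simply spell out the finiteness of the infimum, the passage to the limit, and the uniqueness step more explicitly than the paper does.
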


\begin{proof}
Let $\{Q_i\}$ be a minimizing sequence and let $Q_{ij}=\frac{1}{2}Q_i+\frac{1}{2}Q_j$.
By (\ref{tricomp}) with $t=\frac{1}{2}$, 
$$d^2(f(x),Q_{ij}) \leq \frac{1}{2} d^2(f(x),Q_i)+\frac{1}{2} d^2(f(x),Q_j)-\frac{1}{4} d^2(Q_i,Q_j).
$$
Integrating, we obtain
$$
I_{f,\nu}(Q_{ij}) \leq \frac{1}{2} I_{f,\nu}(Q_i) + \frac{1}{2} I_{f,\nu}(Q_j) -\frac{1}{2}d^2(Q_i,Q_j).
$$
Thus, $d^2(Q_i,Q_j)$ is a Cauchy sequence.  We conclude that any minimizing sequence is a Cauchy sequence and converges to a minimizing element.
\end{proof}
\begin{lemma}There exists a locally Lipschitz $\rho$-equivariant map $\tilde f: \tilde M \rightarrow X$. If $X$ is smooth, then $\tilde f$ can be chosen to be smooth.
\end{lemma}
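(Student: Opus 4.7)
The plan is to define $\tilde f(x)$ as the center of mass (Lemma~\ref{com}) of a smoothly varying, $\rho$-equivariant family of probability measures on $X$ whose atoms are the $\rho$-images of finitely many reference points. Concretely, I would choose a locally finite open cover $\{U_\alpha\}$ of $M$ by evenly covered sets with a smooth subordinate partition of unity $\{\eta_\alpha\}$. For each $\alpha$, pick one component $\tilde U_\alpha \subset \tilde M$ of $\pi^{-1}(U_\alpha)$ and an arbitrary point $Q_\alpha \in X$. For $x \in \tilde M$ with $\pi(x) \in U_\alpha$, let $\gamma_\alpha(x) \in \pi_1(M)$ be the unique deck transformation with $x \in \gamma_\alpha(x)\tilde U_\alpha$, and set $P_\alpha(x) = \rho(\gamma_\alpha(x))Q_\alpha$. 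Then form the probability measure
\[
\mu_x = \sum_\alpha \eta_\alpha(\pi(x))\,\delta_{P_\alpha(x)}
\]
on $X$ and let $\tilde f(x)$ be its unique center of mass.

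Equivariance is automatic: for $\gamma \in \pi_1(M)$ one has $\gamma_\alpha(\gamma x) = \gamma\gamma_\alpha(x)$, so $P_\alpha(\gamma x) = \rho(\gamma)P_\alpha(x)$, while $\eta_\alpha \circ \pi$ is deck-invariant; hence $\mu_{\gamma x}$ is the pushforward of $\mu_x$ under the isometry $\rho(\gamma)$, and therefore $\tilde f(\gamma x) = \rho(\gamma)\tilde f(x)$.

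The main obstacle is the local Lipschitz (and smoothness) property. Fix $x_0 \in \tilde M$ and a neighborhood $V$ on which $\pi$ is injective and only finitely many $\eta_\alpha \circ \pi$ are nonzero. On $V$, each $\gamma_\alpha$ is locally constant, so the atoms $P_\alpha$ are fixed while the weights $t_\alpha(x) := \eta_\alpha(\pi(x))$ vary smoothly. It thus suffices to show that, given fixed points $P_1,\dots,P_N \in X$, the minimizer $Q(t)$ of $F_t(Q) := \sum_\alpha t_\alpha d^2(P_\alpha,Q)$ depends Lipschitz-continuously on $t$ in the simplex. Applying \eqref{tricomp} with parameter $\tfrac{1}{2}$ to $F_t$ yields the uniform convexity bound $d^2(Q(t),Q(s)) \leq 4\bigl(F_t(Q(s)) - F_t(Q(t))\bigr)$. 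Symmetrizing in $s$ and $t$ and using $|d^2(P,A) - d^2(P,B)| \leq (d(P,A)+d(P,B))\,d(A,B)$ together with the a priori bound $d(P_\alpha, Q(t)) \leq \max_{\beta,\gamma} d(P_\beta, P_\gamma)$ (the minimum is attained near the convex hull of the $P_\alpha$), we obtain
\[
2\,d^2(Q(t),Q(s)) \leq 4\sum_\alpha (t_\alpha - s_\alpha)\bigl(d^2(P_\alpha,Q(s)) - d^2(P_\alpha,Q(t))\bigr) \leq C|t-s|\,d(Q(t),Q(s)),
\]
yielding $d(Q(t),Q(s)) \leq C|t-s|$, which transfers to the Lipschitz bound for $\tilde f$ on $V$.

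For smoothness when $X$ is a smooth Riemannian manifold of non-positive sectional curvature, each $Q \mapsto d^2(P_\alpha,Q)$ is smooth with uniformly positive-definite Hessian (by Hessian comparison in non-positive curvature). The Euler--Lagrange equation $\sum_\alpha t_\alpha \exp_Q^{-1}(P_\alpha) = 0$ then determines $Q(t)$ implicitly as a smooth function of $t$ via the implicit function theorem, promoting $\tilde f$ to a smooth $\rho$-equivariant map.
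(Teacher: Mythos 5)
Your construction is correct, but it follows a genuinely different route from the paper's. The paper also builds $\tilde f(x)$ as a center of mass via Lemma~\ref{com}, but it applies that lemma to the continuous measures $d\mu_x=\chi_{B_1(x)}\,d\mathsf{vol}/V(x)$ (normalized volume of the unit ball about $x$) and to the map $f$ sending a fundamental domain to a single point $P$, extended equivariantly; the regularity of $\tilde f$ then comes from the variational estimate (\ref{se}) combined with the total-variation bound $\int|d\mu_{x_0}-d\mu_{x_1}|\le C\rho(x_0,x_1)$. You instead use finitely many orbit points weighted by a smooth partition of unity and prove Lipschitz dependence of the weighted barycenter on the weights by symmetrizing the strong-convexity inequality coming from (\ref{tricomp}). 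This buys you two things: your symmetrized estimate $2d^2(Q(t),Q(s))\le C|t-s|\,d(Q(t),Q(s))$ genuinely yields a Lipschitz bound, whereas the paper's one-sided estimate as displayed gives $d^2(\tilde f(x_0),\tilde f(x_1))\le C\rho(x_0,x_1)$, i.e.\ only H\"older exponent $1/2$ (it can be upgraded by exactly your symmetrization, or by bounding $d^2(f,Q_1)-d^2(f,Q_{1/2})$ by $d(Q_1,Q_{1/2})$ times a bounded factor); and your atoms-with-smooth-weights construction addresses the smoothness assertion for smooth $X$ via the implicit function theorem applied to $\sum_\alpha t_\alpha\exp_Q^{-1}(P_\alpha)=0$, which the paper's ball-averaging proof does not treat at all. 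Two small points you should spell out: the a priori bound $d(P_\alpha,Q(t))\le C\max_{\beta,\gamma}d(P_\beta,P_\gamma)$ holds because closed balls in an NPC space are convex and nearest-point projection onto a closed convex set containing the atoms does not increase $F_t$ (or, alternatively, it follows from the variance inequality for barycenters); and the statement that the $\gamma_\alpha$ are locally constant is cleanest if you index atoms by pairs (chart, sheet), observing that the closure of one sheet of $\pi^{-1}(U_\alpha)$ cannot meet a different sheet, so each weight $x\mapsto\eta_\alpha(\pi(x))\mathbf{1}_{\gamma\tilde U_\alpha}(x)$ is smooth and only finitely many are nonzero near any point.
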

%
%
%
%
%

\begin{proof}

Let  $Q_0:=Q_{f,\mu_0}$ (resp.~$Q_1:=Q_{f,\mu_1}$) be the center of mass for  the  function $f \in L^2(\mathcal M, X)$ and the probability space $(\calM,\mu_0)$  (resp.~$(\calM, \mu_1))$. Let $Q_t=(1-t)Q_0+tQ_1$.
By the minimizing property of $Q_0$ and $Q_1$, 
\begin{eqnarray*}
\lefteqn{\int d^2(f,Q_0) + d^2(f,Q_1) \ d\mu_0}
\\
& = & \int d^2(f,Q_0) d\mu_0 + \int d^2(f,Q_1) d\mu_1+\int d^2(f,Q_1) (d\mu_0-d\mu_1) \label{abc}
\nonumber \\
& \leq &   2\int d^2(f,Q_{1/2})\, d\mu_0   +\int \left( d^2(f,Q_1) -  d^2(f,Q_{1/2}) \right)(d\mu_0-d\mu_1)
\nonumber
\\
& \leq & \int d^2(f,Q_0)+d^2(f,Q_1) -\frac{1}{4} d^2(Q_0,Q_1)  \, d\mu_0
\\
& & \ \  +\int \left( d^2(f,Q_1) -  d^2(f,Q_{1/2}) \right)(d\mu_0-d\mu_1).
\end{eqnarray*}
The last inequality is by  triangle comparison.
Consequently,
\begin{equation} \label{se}
d^2(Q_0,Q_1) \leq 4\int \left(d^2(f,Q_1)-d^2(f,Q_\frac{1}{2}) \right)(d\mu_0-d\mu_1).
\end{equation}

For each $x \in \tilde M$, 
let 
$$
d\mu_x=\frac{d\mathsf{vol} \mres B_1(x)}{V(x)}, \ \ V(x)=\mathsf{vol}(B_1(x)).
$$
where $\mathsf{vol}=\star1$ is the volume form of $\tilde M$.  
Since $\mu_x$ is only dependent on the metric of $M$, $x \mapsto \mu_x$ is invariant under the isometric action of $\pi_1(M)$.  Furthermore, 
\begin{align*}
\int |d\mu_{x_0}-d\mu_{x_1}| 
& = \int \left| \frac{1}{V(x_0)} \chi_{B_1(x_0)} - \frac{1}{V(x_0)} \chi_{B_1(x_0)} \right| \star1
\\
& \leq C\rho(x_0,x_1)
\end{align*}
where $\rho$ denotes the distance function on $M$.

Let $M_0$ be a fundamental domain.  Let $f(M_0)=P$ and extend equivariantly to 
$
f:\tilde M \rightarrow X.
$    
For simplicity, assume $M_0$ is compact.  Then there exists a constant $L$ such that 
\[
d(f(x_0), f(x_1)) \leq L  \ \mbox{whenever} \  \rho(x_0,x_1)<2.
\]
Thus, for  $\rho(x_0,x_1)<1$ and  in the support of $|d\mu_{x_0}-d\mu_{x_1}|$, 
\[
d^2(f,Q_1) - d^2(f,Q_{1/2}) \leq d^2(f,Q_1) + d^2(f,Q_t) \leq 2L^2.
\]

Define
\[
\tilde f: \tilde M \rightarrow X, \ \ \tilde f(x)=Q_{f,\mu_x}
\]
The $\pi_1(M)$-invariance of $\mu_x$ and the $\rho$-equivariance of $f$ imply the  $\rho$-equivariance of $\tilde f$.  
Apply (\ref{se}) with $\calM=M$, $\mu_0=\mu_{x_0}$ and $\mu_1=\mu_{x_1}$ to obtain
\begin{align*}
d^2(\tilde f(x_0), \tilde f(x_1)) & \leq   2L^2 \int |d\mu_{x_0}-d\mu_{x_1} |\leq  2L^2 C \rho(x_0,x_1).  \qedhere
\end{align*}
\end{proof}

\subsection{The boundary at infinity} A good reference is \cite{bridson-haefliger}.
Suppose  $X$ is an NPC-space.  Two geodesic rays $c, c' : [0, \infty) \rightarrow X$ are said to be asymptotic if there exists a constant $K$ such that $d(c(t),c'(t))< K$ for all $t > 0$. The set $\partial X$ of boundary points of $X$ (which we shall also call the points at infinity) is the set of equivalence classes of geodesic rays, two geodesic rays being equivalent if and only if they are asymptotic. We denote $\bar X = X \cup \partial X$. 
Notice that the images of two asymptotic geodesic rays under any isometry  of $X$ are again asymptotic geodesic rays, and hence any isometry extends to give a bijection of $\bar X$. The next proposition is \cite[Proposition 8.2]{bridson-haefliger}.

\begin{proposition}If $X$ is an NPC-space and $c : [0, \infty) \rightarrow X$ is a geodesic ray starting from $P$, then for every point $P_1 \in X$ there is a unique geodesic ray  which starts from $P_1$ and is asymptotic to $c$.
\end{proposition}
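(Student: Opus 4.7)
The argument splits into existence and uniqueness. For existence, set $r := d(P, P_1)$, and for each positive integer $n$ let $c_n : [0, L_n] \to X$ be the unit-speed geodesic from $P_1$ to $c(n)$, where $L_n = d(P_1, c(n))$. The triangle inequality gives $|L_n - n| \leq r$, so $L_n \to \infty$. I would construct $c'$ as the pointwise limit $c'(s) := \lim_n c_n(s)$; the heart of the matter is to show that $\{c_n(s)\}$ is Cauchy for each fixed $s \geq 0$.

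The strategy for the Cauchy estimate is a double application of (\ref{tricomp}) to the geodesic triangle with vertices $P_1, c(m), c(n)$ for $m \leq n$ both large enough that $L_m \geq s$. First, applied along the side from $P_1$ to $c(m)$ at parameter $s/L_m$, it produces an upper bound for $d^2(c(n), c_m(s))$. Substituting this bound into a second application along the side from $P_1$ to $c(n)$ at parameter $s/L_n$ and simplifying yields
\[
d^2(c_m(s), c_n(s)) \leq s^2 \cdot \frac{(n-m)^2 - (L_n - L_m)^2}{L_m L_n}.
\]
Factoring the numerator as $[(n-m) - (L_n - L_m)][(n-m) + (L_n - L_m)]$, and using $0 \leq (n-m) - (L_n - L_m) \leq 2r$, $(n-m) + (L_n - L_m) \leq 2(n-m)$, and $L_m L_n \geq (m-r)(n-r)$, the right-hand side is bounded by $4 r s^2 /(m-r)$, which tends to $0$ uniformly in $n \geq m$. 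Hence $c'(s)$ is well-defined; passing to the limit in $d(c_n(s_1), c_n(s_2)) = |s_2 - s_1|$ shows that $c'$ is a unit-speed geodesic ray from $P_1$.

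To verify that $c'$ is asymptotic to $c$, I would apply the quadrilateral inequality (\ref{menelaus}) to the two geodesics $c_n$ (from $P_1$ to $c(n)$) and the restriction of $c$ to $[0,n]$ (from $P$ to $c(n)$). Since these share the common endpoint $c(n)$, the inequality yields $d^2(c_n(tL_n), c(tn)) \leq (1-t) r^2$ for $t \in [0,1]$. Setting $t = s/L_n$ and combining with the triangle-inequality estimate $d(c(sn/L_n), c(s)) \leq s|L_n - n|/L_n \leq sr/L_n$ gives $d(c_n(s), c(s)) \leq r + sr/L_n$; letting $n \to \infty$ produces $d(c'(s), c(s)) \leq r$, so $c'$ is indeed asymptotic to $c$.

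For uniqueness, suppose $c', c''$ are two geodesic rays from $P_1$ both asymptotic to $c$; by the triangle inequality, $d(c'(t), c''(t))$ is bounded by some constant $K$. Applying (\ref{menelaus}) to the pair $c'|_{[0,T]}, c''|_{[0,T]}$, which share the starting point $P_1$, gives $d^2(c'(tT), c''(tT)) \leq t \, d^2(c'(T), c''(T))$, so $s \mapsto d^2(c'(s), c''(s))/s$ is nondecreasing. Combined with the bound $\leq K^2/s \to 0$, this ratio must vanish identically, forcing $c' \equiv c''$. The main obstacle is the Cauchy estimate: geometrically it expresses the standard CAT(0) fact that the comparison angle at $P_1$ in the thin triangle $P_1 c(m) c(n)$ tends to zero, but I expect the bookkeeping in the double application of (\ref{tricomp}) to be the trickiest step, whereas the asymptoticity and uniqueness arguments then follow as routine manipulations with (\ref{menelaus}).
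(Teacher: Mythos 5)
Your argument is correct. I checked the central estimate: the double application of (\ref{tricomp}) (first in the triangle $P_1,c(m),c(n)$ along $c_m$ at parameter $s/L_m$, then along $c_n$ at parameter $s/L_n$) does simplify exactly to $d^2(c_m(s),c_n(s))\le \frac{s^2}{L_mL_n}\bigl((n-m)^2-(L_n-L_m)^2\bigr)$, and your factoring with $|L_n-n|\le r$, $|L_m-m|\le r$ and $L_n-L_m\le n-m$ gives the uniform bound $4rs^2/(m-r)$, so $\{c_n(s)\}$ is Cauchy (completeness of $X$, which is part of the NPC definition, gives the limit); the asymptoticity estimate via (\ref{menelaus}) with the degenerate quadrilateral $R=S=c(n)$ and the uniqueness argument via the monotonicity of $s\mapsto d^2(c'(s),c''(s))/s$ are both fine. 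Note that the paper itself does not prove this proposition at all; it simply quotes \cite[Proposition 8.2]{bridson-haefliger}. Your proof is essentially the standard Bridson--Haefliger construction (asymptotic ray as a limit of the segments $[P_1,c(n)]$), but it has the virtue of being entirely self-contained within the tools the paper has already introduced, namely the triangle comparison (\ref{tricomp}) and the quadrilateral inequality (\ref{menelaus}), rather than the angle/convexity machinery used in the cited reference. Two cosmetic points: for fixed $s$ you need both $L_m\ge s$ and $L_n\ge s$ (automatic for $m,n$ large), and the Cauchy bound is uniform on bounded $s$-intervals, which is what lets you pass to the limit in $d(c_n(s_1),c_n(s_2))=|s_1-s_2|$ to see that $c'$ is a unit-speed ray.
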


The topology of $\bar X$ is defined as follows:
A sequence of points $P_i$ converges to a point $P^* \in \partial X$ if and only if the geodesics joining $P_0$ to $P_i$ converge (uniformly on compact subsets) to the geodesic ray that starts from $P_0$ and belongs to the class of $P^*$.

\begin{example} If $X$ is a complete $n$-dimensional Riemannian manifold of non-positive sectional curvature, then $\partial X$ is homeomorphic to $S^{n-1}$.  Indeed, given
a base point $P_0$, we can obtain a homeomorphism by considering the map  which associates to each unit vector $V$ tangent to $X$ at $P_0$ the class of the geodesic ray $c$ starting at $P_0$ with velocity vector $V.$ In particular, if $X$  is the   $n$-dimensional hyperbolic space, then $\bar X$ is homeomorphic to the $n$-dimensional ball in $\RR^n$. If $X$ is a locally compact Euclidean building, then $\partial X$ is a compact spherical building (cf. \cite[Proposition 4.2.1]{kleiner}).
\end{example}
%

\begin{lemma}\label{equlaconv}
If 
$P_i$ is a sequence in $X$ with 
$\lim P_i=P^* \in \partial X$ and if $Q_i$ is another sequence in $X$ with $d(P_i, Q_i) \leq C$ independently of $i$, then  $\lim Q_i=P^*$. 
\end{lemma}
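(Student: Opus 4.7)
The plan is to show that the geodesic segments from a fixed base point to $Q_i$ stay close to those going to $P_i$, so that they share the same asymptotic limit. The main tool will be the NPC quadrilateral inequality (\ref{menelaus}) applied with a degenerate side.

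Fix a base point $P_0 \in X$. Let $c : [0, \infty) \to X$ be the unit-speed geodesic ray from $P_0$ in the class of $P^*$, and let $c_i : [0, L_i] \to X$ and $c_i' : [0, L_i'] \to X$ be the unit-speed geodesics from $P_0$ to $P_i$ and to $Q_i$, where $L_i = d(P_0, P_i)$ and $L_i' = d(P_0, Q_i)$. The hypothesis $P_i \to P^*$ means $c_i \to c$ uniformly on every compact subinterval of $[0, \infty)$; in particular $L_i \to \infty$, and since $|L_i - L_i'| \leq d(P_i, Q_i) \leq C$ also $L_i' \to \infty$. It will suffice to prove that $c_i' \to c$ uniformly on every compact subinterval of $[0, \infty)$.

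For the key estimate I would apply (\ref{menelaus}) to the ``quadrilateral'' with $P = Q = P_0$, $S = P_i$, $R = Q_i$. With parameter $s$ one obtains $P_s = c_i(s L_i)$ and $Q_s = c_i'(s L_i')$, and since $d_{PQ} = 0$ the inequality reduces to
\[
d^2\bigl(c_i(s L_i),\, c_i'(s L_i')\bigr) \;\leq\; s\, d(P_i, Q_i)^2 - s(1-s)(L_i - L_i')^2 \;\leq\; s\, C^2.
\]
Taking $s = T / L_i$ for any fixed $T > 0$ (and $i$ large enough that $s \leq 1$) gives $d\bigl(c_i(T),\, c_i'(T L_i' / L_i)\bigr) \leq C \sqrt{T / L_i}$. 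Since $c_i'$ is $1$-Lipschitz and $|T L_i' / L_i - T| \leq TC/L_i$, the triangle inequality then yields
\[
d\bigl(c_i(T),\, c_i'(T)\bigr) \;\leq\; C \sqrt{T / L_i} + TC/L_i \;\longrightarrow\; 0.
\]

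Combining this with $c_i(T) \to c(T)$ gives $c_i'(T) \to c(T)$ for every $T$; the bound above is uniform in $T \in [0, T_0]$ for any $T_0 > 0$, and $c_i \to c$ uniformly on $[0, T_0]$ by hypothesis, so $c_i' \to c$ uniformly on $[0, T_0]$. This is precisely the definition of $Q_i \to P^*$. The only piece of genuine bookkeeping is the parameter mismatch $T L_i'/L_i$ versus $T$ caused by $L_i \neq L_i'$, which will be absorbed cleanly by the $1$-Lipschitz property of $c_i'$; I do not foresee any serious obstacle beyond that.
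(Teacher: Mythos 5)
Your proposal is correct and follows essentially the same route as the paper: fix a base point, use an NPC comparison together with $d(P_i,Q_i)\leq C$ to show the geodesics from $P_0$ to $Q_i$ converge uniformly on compact sets to the same ray, handling the length mismatch $|L_i-L_i'|\leq C$ along the way. The only difference is cosmetic: you invoke the quadrilateral inequality (\ref{menelaus}) with a degenerate side (yielding the bound $C\sqrt{s}$ at equal fractional parameters), whereas the paper uses convexity of the distance between the two geodesics issuing from $P_0$ at equal arclength parameters (yielding $2Ct/t_i$); both give the required decay.
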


\begin{proof}
Fix $P_0 \in X$.  Let $\gamma:[0,\infty) \rightarrow \infty$ be an arclength parameterized geodesic ray  in the equivalence class $P^*$ with $\gamma(0)=P_0$.  Let $t_i=d(P_0,P_i)$ (resp.~$\tau_i=d(P_0,Q_i)$) and let $\gamma_i:[0,t_i] \rightarrow X$ (resp.~$\hat \gamma_i:[0,\tau_i] \rightarrow X$) be the arclength parameterized geodesic segment connecting $P_0$ and $P_i$ (resp.~$Q_i)$.   By the triangle inequality, 
\[
d(\hat \gamma_i(t_i), \hat \gamma_i(\tau_i))=|t_i-\tau_i|=|d(P_0,P_i)-d(P_0,Q_i)| \leq d(P_i,Q_i) \leq C.
\]  
Thus, assuming $t \leq t_i \leq \tau_i$, the NPC condition implies 
\[
d(\hat \gamma_i(t), \gamma_i(t)) \leq  \frac{t}{t_i} d(\hat \gamma_i(t_i), \gamma_i(t_i)) \leq \frac{t}{t_i}\Big(d(\hat \gamma_i(t_i), \hat \gamma_i(\tau_i))+ d(\hat \gamma_i(\tau_i), \gamma_i(t_i)) \Big) \leq 
 \frac{2Ct}{t_i}.
\]
Similarly, assuming $t \leq \tau_i < t_i$,
\[
d(\hat \gamma_i(t), \gamma_i(t)) \leq  \frac{2Ct}{\tau_i}.
\]
Thus, for $t \leq \min\{t_i, \tau_i\}$, 
\[
d(\hat \gamma_i(t), \gamma(t)) \leq  d(\hat \gamma_i(t), \gamma_i(t)) + d(\gamma_i(t), \gamma(t))
\leq  \frac{2Ct}{\max\{t_i, \tau_i\}}+d(\gamma_i(t), \gamma(t)).
\]
Fix $T_0>\infty$.  The assumption that  $\lim P_i =P^*$ implies that $t_i, \tau_i \rightarrow \infty$ and the geodesics $\gamma_i$ converge uniformly to $\gamma$ in $[0,T_0]$. Thus, $\hat \gamma_i$ also converge uniformly to $\gamma$ in $[0,T_0]$.
\end{proof}

\begin{lemma}The stabilizer of a point at infinity is contained in a parabolic subgroup. So if the image of $\rho$ is Zariski dense it cannot fix a point at infinity.
\end{lemma}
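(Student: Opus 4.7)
The plan is to reduce the lemma to a classical structural fact: for $X$ a symmetric space of non-compact type (or a locally compact Euclidean building), the stabilizer in $G=\mathsf{Isom}(X)$ of any point at infinity $P^* \in \partial X$ is a proper parabolic subgroup of $G$. Once this is in hand, the second sentence of the lemma is a one-line consequence of Zariski density.

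For the structural statement I would fix $P^* \in \partial X$ together with a geodesic ray $\gamma$ in its equivalence class. Any $g \in G_{P^*} := \mathrm{Stab}_G(P^*)$ carries every ray asymptotic to $\gamma$ to another asymptotic ray; by Lemma \ref{equlaconv} this forces $g$ to permute horoballs centered at $P^*$ and to shift the Busemann function $b_{P^*}$ by an additive constant. Next I would invoke the horospherical (Iwasawa-type) decomposition: picking a maximal flat $F$ with $P^* \in \partial F$, the stabilizer of $F$ in $G$ decomposes as $MAN$, where $N$ is the unipotent radical fixing pointwise the face of the spherical chamber at infinity containing $P^*$. A direct verification, using that $G_{P^*}$ normalizes $N$ and intersects $MA$ in the subgroup fixing $P^*$, identifies $G_{P^*}$ with the standard parabolic subgroup attached to this face. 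The locally compact Euclidean building case is the exact parallel via Bruhat-Tits theory and the spherical building at infinity.

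The second assertion is then immediate. A parabolic subgroup of a semisimple algebraic group $G$ is by definition a proper Zariski closed subgroup. If $\rho(\pi_1(M)) \subseteq G_{P^*}$ for some $P^* \in \partial X$, then passing to Zariski closures gives $\overline{\rho(\pi_1(M))}^{\mathrm{Zar}} \subseteq G_{P^*} \subsetneq G$, contradicting the Zariski density of $\rho(\pi_1(M))$.

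The main obstacle is the algebraic structure theorem identifying $G_{P^*}$ with a parabolic subgroup; the Zariski-density step is essentially free once that is granted. In these notes I would cite this directly from standard references (Borel's \emph{Linear Algebraic Groups} or Eberlein's book on symmetric spaces for the Riemannian case, and Bruhat-Tits together with the theory of spherical buildings at infinity for the building case), since the content of the lemma is classical group theory rather than something specific to the harmonic-maps framework developed here.
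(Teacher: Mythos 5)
The paper offers no proof of this lemma at all -- it is stated as a classical fact -- so your plan of reducing it to the standard structure theory (stabilizers of points at infinity are parabolic, via the horospherical decomposition in the symmetric-space case and the spherical building at infinity in the Bruhat--Tits case) and then invoking Zariski density is exactly the argument the authors leave implicit, and it is correct. One small correction: a parabolic subgroup is not \emph{by definition} a proper subgroup ($G$ is parabolic in itself), so you should not derive the contradiction from the definition; properness comes from the geometry, namely that a point of $\partial X$ lies in the interior of a nonempty simplex of the spherical (Tits) building at infinity, whose stabilizer is a \emph{proper} parabolic $K$-subgroup, and it is this properness together with Zariski-closedness that makes the density argument go through.
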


\subsection{Global existence result}
We prove  existence of  equivariant harmonic maps \cite[Theorem 7.1]{gromov-schoen}.

\begin{theorem}\label{existence}
Let $X$ be a locally compact NPC space. Assume that the image of $\rho$ doesn't fix a point in $\partial X$ and that  there exists a Lipschitz equivariant map 
 $v: \tilde M \rightarrow X$ with finite energy. Then there is a Lipschitz equivariant map $f$ of least energy and the restriction of $f$ to a small ball about any point is minimizing.
\end{theorem}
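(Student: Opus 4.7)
The plan is to extract a subsequential limit of a minimizing sequence of $\rho$-equivariant finite-energy maps and then invoke lower semi-continuity of energy; the no-fixed-point-at-infinity hypothesis is what prevents drift to $\partial X$. Set
\[
E_0 = \inf\{E(g) : g \text{ is } \rho\text{-equivariant with } E(g) < \infty\},
\]
which is finite since $E(v) < \infty$, and pick a minimizing sequence $f_i$ with $E(f_i) \to E_0$. To obtain uniform regularity along the sequence, I would first replace each $f_i$ by an equivariant map that locally minimizes energy on every ball of a $\pi_1(M)$-invariant refinement of a fixed finite cover of $M$ by balls small enough that distinct translates are disjoint. This can be arranged by iteratively applying the local Dirichlet existence result of the previous subsection, using boundary data inherited from $f_i$ and lifted equivariantly from $M$; each replacement preserves equivariance and does not increase energy. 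After this modification, Theorem~\ref{lipschitz} yields a uniform Lipschitz bound $L(K)$ for $\{f_i\}$ on every compact $K \subset \tilde M$, depending only on $K$, $E_0+1$, and the geometry of $M$.

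The main obstacle is to show that $\{f_i(x_0)\}$ remains in a bounded subset of $X$ for some (hence any) fixed $x_0 \in \tilde M$. Suppose, for contradiction, that along a subsequence $f_i(x_0) \to P^* \in \partial X$. For every $\gamma \in \pi_1(M)$, the uniform Lipschitz bound gives
\[
d(f_i(x_0), f_i(\gamma x_0)) \leq L \cdot d_{\tilde M}(x_0, \gamma x_0) =: C_\gamma < \infty,
\]
uniformly in $i$. Lemma~\ref{equlaconv} then forces $f_i(\gamma x_0) \to P^*$ as well. On the other hand, $f_i(\gamma x_0) = \rho(\gamma) f_i(x_0)$ by equivariance, and any isometry of an NPC space extends continuously to $\partial X$ (it takes asymptotic rays to asymptotic rays and preserves the convergence of geodesics defining the topology on $\bar X$), so $f_i(\gamma x_0) \to \rho(\gamma) P^*$. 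Hence $\rho(\gamma) P^* = P^*$ for every $\gamma \in \pi_1(M)$, contradicting the hypothesis that $\rho(\pi_1(M))$ fixes no point at infinity. This step is the heart of the argument and the most delicate point, since it is precisely where the topological assumption on $\rho$ enters.

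Once $\{f_i(x_0)\}$ is bounded, the uniform Lipschitz bound together with local compactness of $X$ yields pointwise precompactness of $\{f_i\}$ on every compact subset of $\tilde M$. Arzela-Ascoli with a diagonal argument produces a subsequence converging locally uniformly to a Lipschitz $\rho$-equivariant map $f$, and lower semi-continuity of energy under such convergence \cite[Theorem 1.6.1]{korevaar-schoen1} gives $E(f) \leq \liminf E(f_i) = E_0$, so $f$ achieves the minimum. Finally, for the local minimizing property, if some competitor $g$ agreeing with $f$ outside a small ball $B \subset \tilde M$ (of radius less than half the injectivity radius of $M$, so that its $\pi_1(M)$-translates are disjoint) satisfied $\int_B |dg|^2 \star 1 < \int_B |df|^2 \star 1$, then defining a new map equal to $\rho(\gamma) g \circ \rho(\gamma)^{-1}$ on each $\gamma B$ and to $f$ elsewhere yields a $\rho$-equivariant map of total energy strictly less than $E_0$, contradicting global minimality.
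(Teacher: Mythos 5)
Your overall architecture matches the paper's (minimizing sequence, harmonic replacement, exclusion of drift to $\partial X$ via Lemma~\ref{equlaconv} and the no-fixed-point hypothesis, then lower semicontinuity of energy), but there is a genuine gap at the step you yourself flag as the heart of the argument. You claim that by ``iteratively applying the local Dirichlet existence result'' on every ball of a $\pi_1(M)$-invariant cover you obtain maps that locally minimize energy on \emph{every} ball of the cover, and hence, via Theorem~\ref{lipschitz}, a uniform Lipschitz bound $L(K)$ on every compact $K\subset\tilde M$. This cannot be arranged by iteration: replacing on a later (overlapping) family of balls destroys minimality on the earlier ones, so after finitely many replacements the map is harmonic only on the balls of the last family, and Theorem~\ref{lipschitz} gives Lipschitz control only there. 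A map that is simultaneously energy-minimizing on all balls of an overlapping cover is already (locally) harmonic, so ``arranging'' this is essentially the existence statement you are trying to prove; making an alternating-replacement scheme converge would require precisely the compactness (no escape to infinity) that is at issue. Consequently the inequality $d(f_i(x_0),f_i(\gamma x_0))\le L\, d_{\tilde M}(x_0,\gamma x_0)$, which you need uniformly in $i$ along a path leaving the ball $B$, is not justified.

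The paper closes exactly this hole differently: it performs harmonic replacement only on a single ball $B$ (and its disjoint translates $\gamma(B)$), so uniform Lipschitz bounds are available only on compact subsets of $B$, and then bounds $d(\bar v_i(x_0),\rho(\gamma)\bar v_i(x_0))$ by a Fubini-type argument, choosing a curve $C$ from $x_0$ to $\gamma(x_0)$ along which the restricted one-dimensional energies of the $\bar v_i$ are uniformly bounded (possible because $E(\bar v_i)\to E_0$), whence the lengths of $\bar v_i(C)$ are uniformly bounded. If you substitute that curve-selection argument for your global Lipschitz claim, the rest of your proof (uniform convergence on compact subsets of $B$ via local compactness and Arzel\`a--Ascoli, lower semicontinuity of energy, and the cut-and-paste argument for local minimality, where the competitor on $\gamma B$ should be $\rho(\gamma)\circ g\circ\gamma^{-1}$) goes through and is essentially the paper's proof.
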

\begin{proof}
Let $E_0$ denote the infimum of the energy taken over all Lipschitz equivariant maps. Let $v_i$ be a sequence of Lipschitz equivariant maps with $E(v_i) \rightarrow E_0$. Let $B$ be a ball in $\tilde M$ such that $\gamma(B) \cap B= \emptyset$ for all $\gamma \in \pi_1(M)$. We may then construct a new minimizing sequence $\bar v_i$, by replacing $v_i$ with the solution to the Dirichlet problem on each $\gamma(B)$. Clearly $\bar v_i$ is also a minimizing sequence.

 On a compact subset of $B$, the sequence $\bar v_i$ is uniformly Lipschitz by Theorem~\ref{lipschitz}. It follows that a subsequence of $\bar v_i$ converges uniformly on compact subsets of $B$ to a map into $\bar X$ which either maps into $X$ or maps to a single point $P^* \in \partial X$. We  exclude the second possibility as follows. Let $x_0 \in \tilde M$ be the center of the chosen ball $B$. Let $C$ be any smooth embedded curve from $x_0$ to $\gamma(x_0)$. An elementary argument using Fubini's theorem shows that $C$ may be chosen so that the energy of the restriction of each map $\bar v_i$ to $C$ is uniformly bounded. Therefore the length of the curve $\bar v_i(C)$ is uniformly bounded, and in particular 
 $d(v_i(x_0), \rho(\gamma)v_i((x_0)) \leq C$.
Lemma~\ref{equlaconv} implies $\lim \rho(\gamma)v_i(x_0)=P^*$, and hence $\rho(\gamma)P^*=P^*$ for all $\gamma$.
 This is a contradiction.
Therefore we may assume that $\bar v_i$ converges uniformly on compact subsets of $B$. 

From~(\ref{ineqconvuoi}) as before, we have
\[
\int_K|\nabla d(\bar v_i,\bar v_j)|^2\star1 \rightarrow 0
\]
for any compact set $K \subset \tilde M$. Since $\bar v_i$ converges uniformly on compact subsets of $B$, the function 
$d(\bar v_i,v)$ is uniformly bounded there. It then follows from Poincare type inequalities that
\[
\int_K d(\bar v_i,\bar v_j)^2\star1 \rightarrow 0.
\]
In particular, the sequence $\bar v_i \rightarrow f$ which is  a minimizer by lower semicontinuity of energy. The local minimizing property of $f$ follows. This completes the proof.
\end{proof}

\newpage

\section{Pluriharmonic maps and the Siu-Sampson Formula}
\label{sec:siu}

\subsection{Introduction: Bochner methods for harmonic maps} In this section, we discuss the Bochner formulas of Siu \cite{siu} and Sampson \cite{sampson}.  Our exposition closely follows  the approach  of \cite{liu-yang}.   We also present a variation of these formulas inspired by the  work of Mochizuki \cite{mochizuki-memoirs}.  Lastly, we sketch the existence of  pluriharmonic maps into Euclidean buildings. 

\subsection{Pluriharmonic maps from  K\"ahler manifolds to Riemannian manifolds}  \label{KtoR}
Let $(M,\omega, J)$ be a K\"ahler manifold along with its K\"ahler form and complex structure.  
Let 
\[
TM \otimes \mathbb C = T^{(1,0)}M \oplus T^{(0,1)}M
\]
be its complexified tangent bundle decomposed into the $\pm \sqrt{-1}$-eigenspaces of $J$.
We can decompose $v \in TM \otimes \mathbb C$ into
\[
v=v^{1,0}+v^{0,1} \ \mbox{where} \ v^{1,0}=\frac{1}{2}(v-\sqrt{-1}Jv), \  v^{0,1}=\frac{1}{2}(v+\sqrt{-1}Jv).
\]
The cotangent space $T^*M$ has a complex structure still denoted $J$ defined by $J \alpha=\alpha \circ J$.  Accordingly, we have an analogous decomposition
\[
T^*M \otimes \CC=T^{*(1,0)}M \oplus T^{*(0,1)}M.
\]

Let $(N,h)$ be a Riemannian manifold and $TN \otimes \mathbb C$ its complexified tangent bundle.  For a smooth map $f:M \rightarrow N$, let
\begin{equation} \label{E}
E:=f^*(TN \otimes \CC).
\end{equation}
Extending complex linearly,   $df:TM \rightarrow TN$ gives rise to a map
$df:  TM \otimes \CC \rightarrow TN \otimes \CC.$
Denote by $\Omega^{p,q}(E)$,  the space of   $E$-valued $(p,q)$-forms.  Define
\[
d' f:=\frac{1}{2}(df- \sqrt{-1}\,df \circ J) \in \Omega^{1,0}(E),\ \  \ \ d'' f:=\frac{1}{2}(df+\sqrt{-1}\,df \circ J) \in \Omega^{0,1}(E).
\]
We have that
\[
df=d' f+d'' f \ \ \ \ \ \ J df= df \circ J=-\sqrt{-1}\,(d' f-d'' f).
\]
For local coordinates $(y^i)$ of $N$, let $\frac{\partial}{\partial f^i} = \frac{\partial}{\partial y^i} \circ f$.  Then
\begin{eqnarray*}
d' f=d' f^i \frac{\partial}{\partial f^i} 
& & 
d'' f=d'' f^i\frac{\partial}{\partial f^i} 
\\
\overline{d' f} = d'' f \  & &  \overline{d'' f}=d' f.
\end{eqnarray*}
Similarly, 
we can decompose the pullback of the Levi-Civita connection (cf.~Section~\ref{sec:smooth harmonic maps}) as
\[
\nabla=\nabla'+\nabla''
\]
where 
\[
\nabla':C^{\infty}(E) \rightarrow \Omega^{1,0}(E), \ \ \ \nabla'':C^{\infty}(E) \rightarrow \Omega^{0,1}(E).
\]
In turn, $\nabla'$ and $\nabla''$ induce differential operators
\[
d'_E: \Omega^{p,q}(E) \rightarrow \Omega^{p+1,q}(E), \ \ \
d''_E: \Omega^{p,q}(E) \rightarrow \Omega^{p,q+1}(E)
\]
where
\begin{eqnarray*}
d'_E(\phi \otimes s) & = &  d'\phi \otimes s+(-1)^{p+q}\phi \otimes \nabla'_Es
\\
d''_E(\phi \otimes s) & = &  d''\phi \otimes s+(-1)^{p+q}\phi \otimes \nabla''_Es.
\end{eqnarray*}
A straightforward calculation implies that
\begin{equation} \label{easycommute'}
d'_{E} d'' f = - d''_{E} d' f,\ \ \
d'_{E} d' f = 0,  \ \ \  d''_{E} d'' f = 0.
\end{equation}

\begin{lemma}\label{trace}
\[
\tau (f)=2i  \star \left(\frac{ \omega^{n-1}}{(n-1)!} \wedge d'_{E} d'' f \right).
\]
\end{lemma}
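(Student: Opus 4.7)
The strategy is to rewrite $\star df$ using the Kähler form so that $d'_{E} d''f$ emerges naturally after applying $d_\nabla$, and then use the Kähler condition $d\omega = 0$ together with the commutation rules in (\ref{easycommute'}).

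First I would establish the pointwise Kähler Hodge star identities
\[
\star(d'f) = -i\, d'f \wedge \frac{\omega^{n-1}}{(n-1)!}, \qquad \star(d''f) = +i\, d''f \wedge \frac{\omega^{n-1}}{(n-1)!}.
\]
These are the Weil formulas $\star\alpha = -i^{\,p-q}\,\alpha\wedge \omega^{n-1}/(n-1)!$ applied to primitive forms of type $(p,q) = (1,0)$ and $(0,1)$ respectively; any $1$-form is automatically primitive for dimension reasons, and the identity is $\CC$-linear in the form component so it carries over to $E$-valued forms componentwise. It can be verified directly in complex coordinates at a point where $g_{\alpha\bar\beta} = \delta_{\alpha\beta}$, using the real orthonormal coframe $\{\sqrt 2\,dx^\alpha, \sqrt 2\,dy^\alpha\}$.

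Adding the two identities,
\[
\star df \;=\; \star(d'f) + \star(d''f) \;=\; i\,(d''f - d'f) \wedge \frac{\omega^{n-1}}{(n-1)!}.
\]

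Next I would apply $d_\nabla = d'_{E} + d''_{E}$. Because $M$ is Kähler, $\omega^{n-1}/(n-1)!$ is closed, and the graded Leibniz rule gives
\[
d_\nabla(\star df) \;=\; i\, d_\nabla(d''f - d'f) \wedge \frac{\omega^{n-1}}{(n-1)!}.
\]
Now invoke (\ref{easycommute'}): since $d'_{E} d'f = 0 = d''_{E} d''f$ and $d'_{E} d''f + d''_{E} d'f = 0$, we have
\[
d_\nabla(d'f) = d''_{E} d'f = -d'_{E} d''f, \qquad d_\nabla(d''f) = d'_{E} d''f,
\]
so $d_\nabla(d''f - d'f) = 2\, d'_{E} d''f$. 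Therefore
\[
d_\nabla(\star df) \;=\; 2i\, d'_{E} d''f \wedge \frac{\omega^{n-1}}{(n-1)!}.
\]

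Finally I would apply $\star$ once more and use $\tau(f) = \star d_\nabla \star df$ from (\ref{eqn:firstvar3}). The form $\omega^{n-1}/(n-1)!$ has even degree $2n-2$ and $d'_{E} d''f$ has even degree $2$, so they commute under wedge product, yielding the claimed identity. The main obstacle is the first step: pinning down the signs in the Kähler Hodge star identity for primitive $(1,0)$- and $(0,1)$-forms. Once that pointwise identity is in hand with the correct sign $-i^{p-q}$, the rest is a formal manipulation driven by $d\omega = 0$ and the ``easy'' commutation rules (\ref{easycommute'}).
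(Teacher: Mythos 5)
Your proof is correct and takes essentially the same route as the paper: rewrite $\star df$ as $\frac{\omega^{n-1}}{(n-1)!}$ wedged with $\pm i$ times the $(1,0)$/$(0,1)$ parts of $df$ (the paper phrases this pointwise identity as $\star\alpha=\frac{\omega^{n-1}}{(n-1)!}\wedge J\alpha$, which is the same statement since $J$ acts by $\pm\sqrt{-1}$ on $d'f$ and $d''f$), then apply $d_\nabla$, use $d\omega=0$ together with (\ref{easycommute'}), and apply $\star$ once more via $\tau(f)=\star d_\nabla\star df$. The only divergence is a sign convention in the star identity: with your (standard Weil-formula) signs you land exactly on the stated $2i$, whereas the paper's own displayed computation ends with $-2\sqrt{-1}$, an internal convention discrepancy in the paper rather than a gap in your argument.
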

\begin{proof}
We claim
\[
\star \alpha = \frac{\omega^{n-1}}{(n-1)!}\wedge J\alpha, \ \ \ \forall \alpha \in \Omega^1(M, \RR).
\]
To check the claim, use  normal coordinates  $(z^i=x^i+ \sqrt{-1} y^i)$  at a point $x \in M$.  For $\alpha=dx^i$ or  $\alpha=dy^i$, we have \[
dx^i \wedge \frac{\omega^{n-1}}{(n-1)!} \wedge Jdx^i  
= dx^i \wedge dy^i  \wedge \frac{\omega^{n-1}}{(n-1)!} 
= \frac{\sqrt{-1}}{2} dz^i \wedge d\bar z^i \wedge \frac{\omega^{n-1}}{(n-1)!} = \frac{\omega^n}{n!}
\]
and
\[
dy^i \wedge \frac{\omega^{n-1}}{(n-1)!} \wedge Jdy^i  
= -dy^i \wedge dx^i  \wedge \frac{\omega^{n-1}}{(n-1)!} 
= \frac{\sqrt{-1}}{2} dz^i \wedge d\bar z^i \wedge \frac{\omega^{n-1}}{(n-1)!} = \frac{\omega^n}{n!}.
\]
The claim follows by linearity.

Next, note that  
\begin{eqnarray*}
J d'f & = &\frac{1}{2} (df \circ J + \sqrt{-1} df)= \frac{\sqrt{-1}}{2}(df-\sqrt{-1} df \circ J) =\sqrt{-1}d'f
\\
Jd''f & = &  \frac{1}{2} (df \circ J-\sqrt{-1}df) = \frac{\sqrt{-1}}{2} (df + \sqrt{-1} df \circ J) = - \sqrt{-1} d''f,
\end{eqnarray*}
which implies $Jdf=J d'f+Jd''f=\sqrt{-1}(d'f-d''f)$.
Applying the claim for $\alpha=df$, we use the fact that  $d\omega=0$ to obtain
\begin{eqnarray*}
\tau (f)&=&-d_\nabla^{\star} df \\
&=&\star d_\nabla( \star df)\\
&=&\star d_\nabla\left (\frac{\omega^{n-1}}{(n-1)!}\wedge Jdf \right)\\
&=&\star d_\nabla \left( \frac{\omega^{n-1}}{(n-1)!} \wedge (\sqrt{-1}(d' f -d'' f)) \right)\\
&=&-\sqrt{-1} \star \left(\frac{ \omega^{n-1}}{(n-1)!} \wedge (d'_{E} d'' f -d''_{E} d' f) \right)\\
&=&-2\sqrt{-1}  \star \left(\frac{ \omega^{n-1}}{(n-1)!} \wedge d'_{E} d'' f \right).
\end{eqnarray*}
\end{proof}

\begin{definition}$f$ is called {\it pluriharmonic} $d'_{E}d'' f =0.$
\end{definition}

\begin{remark}  \label{holimpleishar'}
Lemma~\ref{trace} implies
\[
\mbox{pluriharmonic} \Longrightarrow \mbox{harmonic}.
\]
Note that holomorphic maps between K\"ahler manifolds are pluriharmonic, and thus harmonic.
\end{remark}

\subsection{Sampson's Bochner formula}

 \begin{theorem}[Sampson's Bochner formula, \cite{sampson}]
\label{sampsonbochner}
For a harmonic map $f: M \rightarrow N$ from a K\"{a}hler manifold $(M,g)$ to a Riemannian manifold $(N,h)$,
\begin{eqnarray*}
d'  d''\{d'' f,d'' f \}\wedge \frac{ \omega^{n-2}}{(n-2)!} \nonumber
& = &    4\left(\left|d'_E d'' f \right|^2 +Q_0\right) \frac{ \omega^n}{n!} 
\end{eqnarray*}
where $\{ \cdot, \cdot\}$ is given in Definition~\ref{bracket} below and 
\[
Q_0=-2g^{\alpha \bar \delta} g^{\gamma \bar \beta} R_{ijkl}
\frac{\partial f^i}{\partial z^\alpha}  \frac{\partial f^k}{\partial \bar z^\beta}  \frac{\partial f^j}{\partial z^\gamma}\frac{\partial f^l}{\partial \bar z^\delta}\]
in local coordinates $(z^\alpha)$ of $M$ and $(y^i)$ of $N$. 
\end{theorem}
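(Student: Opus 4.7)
The plan is to establish the identity by a pointwise local computation. Since both sides are tensorial, fix a point $p \in M$ and work in K\"ahler normal coordinates $(z^\alpha)$ at $p$ (so $g_{\alpha\bar\beta}(p) = \delta_{\alpha\bar\beta}$ and the first derivatives of $g_{\alpha\bar\beta}$ vanish at $p$), and in Riemannian normal coordinates $(y^i)$ on $N$ centered at $f(p)$ (so $h_{ij}(f(p)) = \delta_{ij}$, the Christoffel symbols ${}^N\Gamma^k_{ij}$ vanish at $f(p)$, and the second derivatives of $h_{ij}$ at $f(p)$ are a standard symmetric combination of Riemann tensor components).

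Interpreting $\{d''f, d''f\} = h_{ij}(f)\, d''f^i \wedge d'f^j$ as the natural Hermitian pairing of $E$-valued $(0,1)$-forms (a global real $(1,1)$-form), I would first compute
$$d''\{d''f, d''f\} = \partial_{\bar\nu} h_{ij}(f)\, d\bar z^\nu \wedge d''f^i \wedge d'f^j - h_{ij}(f)\, d''f^i \wedge d''d'f^j,$$
using that $d'' d''f^i = 0$ componentwise. Then applying $d'$ and using $d'd'' = -d''d'$ on functions, I would expand fully. Evaluating at $p$, all terms containing first derivatives of $h_{ij}$ drop out by the normal-coordinate choice on $N$, so $d'd''\{d''f, d''f\}|_p$ reduces to a curvature-bearing term $\partial_\alpha \partial_{\bar\beta}(h_{ij}\circ f)\, dz^\alpha \wedge d\bar z^\beta \wedge d''f^i \wedge d'f^j$ plus the quadratic square $h_{ij}(f)\, d'd''f^i \wedge d'd''f^j$; the latter equals $\delta_{ij}\, d'_E d''f^i \wedge d'_E d''f^j$ at $p$ since the connection 1-form on $E$ vanishes at $f(p)$.

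To extract the coefficient of $\omega^n/n!$ after wedging with $\omega^{n-2}/(n-2)!$, I would invoke the standard combinatorial identity that wedging a $(2,2)$-form with coefficients $T_{\alpha\bar\beta\gamma\bar\delta}$ against $\omega^{n-2}/(n-2)!$ yields $(g^{\alpha\bar\beta} g^{\gamma\bar\delta} - g^{\alpha\bar\delta} g^{\gamma\bar\beta}) T_{\alpha\bar\beta\gamma\bar\delta}$ times $\omega^n/n!$ (up to the appropriate factor of $\sqrt{-1}$ and sign). The diagonal contraction $g^{\alpha\bar\beta} g^{\gamma\bar\delta}$ produces contributions proportional to $g^{\alpha\bar\beta} \nabla_\alpha \partial_{\bar\beta} f^i$, which by Lemma~\ref{trace} is a multiple of $\tau(f)$ and vanishes by harmonicity; this is the unique place where harmonicity is invoked. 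The off-diagonal contraction $g^{\alpha\bar\delta} g^{\gamma\bar\beta}$ produces exactly $|d'_E d''f|^2$ from the square term and, upon identifying $\partial_k \partial_l h_{ij}(f(p)) = -\tfrac{1}{3}(R^N_{ikjl} + R^N_{iljk})$ and applying the first Bianchi identity, yields the curvature quartic $Q_0$ with the stated coefficient $-2$.

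The main obstacle is the careful bookkeeping: tracking signs arising from the antisymmetry of wedge products versus the symmetry of $h_{ij}$, verifying the exact combinatorial identity that relates a $(2,2)$-form wedged with $\omega^{n-2}/(n-2)!$ to a multiple of $\omega^n/n!$, and using the Bianchi identity to combine the two symmetric contributions from $\partial_k \partial_l h_{ij}$ into the single antisymmetrized contraction $-2 g^{\alpha\bar\delta} g^{\gamma\bar\beta} R^N_{ijkl} \partial_\alpha f^i \partial_{\bar\beta} f^k \partial_\gamma f^j \partial_{\bar\delta} f^l$.
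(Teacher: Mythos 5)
Your outline is a coordinate version of a legitimate route (the paper instead argues invariantly: Lemma~\ref{brac'} rewrites $d'd''\{d''f,d''f\}$ as $-\{d'_Ed''f,d'_Ed''f\}+\{d''f,R_E^{(1,1)}(d''f)\}$, Lemma~\ref{stc} and Lemma~\ref{aux'} handle the square term with harmonicity entering only through $\operatorname{Tr}_\omega d'_Ed''f=0$, and Lemma~\ref{tree'} evaluates the curvature term), and your identification of the square term and of the place where $\operatorname{Tr}_\omega$ vanishes is fine. But there is a genuine error in your bookkeeping of the two contractions produced by wedging with $\omega^{n-2}/(n-2)!$. The claim that the diagonal contraction $g^{\alpha\bar\beta}g^{\gamma\bar\delta}$ only produces terms proportional to $g^{\alpha\bar\beta}\nabla_\alpha\partial_{\bar\beta}f^i$, hence is killed by harmonicity, and that $Q_0$ therefore comes solely from the off-diagonal contraction of the curvature-bearing term, is false. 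It is true for the square term $\delta_{ij}\,d'd''f^i\wedge d'd''f^j$, whose double trace is $|\operatorname{Tr}_\omega d'_Ed''f|^2$; but the diagonal contraction of $\partial_\alpha\partial_{\bar\beta}(h_{ij}\circ f)\,dz^\alpha\wedge d\bar z^\beta\wedge d''f^i\wedge d'f^j$ is the quartic $\partial_k\partial_l h_{ij}\,\bigl(g^{\alpha\bar\beta}\partial_\alpha f^l\,\partial_{\bar\beta}f^k\bigr)\bigl(g^{\gamma\bar\delta}\partial_\gamma f^j\,\partial_{\bar\delta}f^i\bigr)$, which contains no second derivatives of $f$ and is not constrained by harmonicity: for instance, if the Hermitian matrix $P^{ab}=g^{\alpha\bar\beta}\partial_\alpha f^a\,\partial_{\bar\beta}f^b$ is diagonal with entries $p_i$, this term is a nonzero multiple of $\sum_{i,k}R_{ikik}\,p_i p_k$.

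Consequently, if you discard all diagonal contributions and compute $Q_0$ from the off-diagonal contraction alone, you obtain the curvature quartic with the wrong constant (in the diagonal-$P$ test above only one third of the correct value survives), so the stated coefficient $-2$ would not come out of your procedure as described. The correct computation must retain both pairings of the curvature-bearing term and combine them using the curvature symmetries and the first Bianchi identity; this is precisely how the factor $2$ in $-2R_{jlki}$ is produced in the paper's Lemma~\ref{tree'}, where the $R_{jkli}$ piece cancels between the two pairings while the $-R_{jlki}$ piece contributes twice. With that repair (and the factor checks you already flag as the ``main obstacle''), your coordinate argument can be completed, but as written the step assigning $Q_0$ entirely to the off-diagonal contraction is a step that would fail.
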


\begin{proof}
Combine Lemma~\ref{brac'}, Lemma~\ref{aux'} and Lemma~\ref{tree'} below.
\end{proof}

\begin{definition} \label{bracket}
Let $\{s^i\}$ be a  local frame of $E$.  For 
\begin{eqnarray*}
\psi  =  \psi_i \otimes s^i \in \Omega^{p,q}(E) \ \mbox{and }  \
 \xi  =  \xi_i \otimes s^i \in \Omega^{p',q'}(E)
 \end{eqnarray*} 
we set
\[ 
\{\psi, \xi \}= \langle s^i, s^j \rangle
 \psi_i \wedge \bar \xi_j
 \in \Omega^{p+q',q+p'}
\] 
where $ \langle \cdot, \cdot \rangle$ is  the complex-linear extention of the Riemannian metric on $E$.
\end{definition}

\begin{lemma}\label{brac'}
For any smooth map $f : M \rightarrow N $ from a  K\"ahler manifold to a Riemannian manifold, we have

\[
d'  d''\{d'' f,d'' f \}\wedge \frac{ \omega^{n-2}}{(n-2)!}=\left(-\{d'_{E}d'' f,  d'_{E}d'' f \}+ \{d'' f, R_E^{(1,1)}(d'' f) \}\right)\wedge \frac{ \omega^{n-2}}{(n-2)!}
\]
where
\[R_E^{(1,1)}
=(d'_E d''_E + d''_E d'_E)
\]
is the $(1,1)$-part of the curvature $R_E=d_E^2$.
\end{lemma}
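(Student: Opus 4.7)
I will establish the stronger pointwise identity
\[
d'd''\{d''f, d''f\} = -\{d'_E d''f, d'_E d''f\} + \{d''f, R_E^{(1,1)}(d''f)\},
\]
from which wedging with $\omega^{n-2}/(n-2)!$ immediately gives the lemma. The strategy has two ingredients: graded Leibniz rules for $d'$ and $d''$ acting on the bracket $\{\cdot,\cdot\}$, arising from parallelism of the complex-linearly extended Riemannian metric on $E$; and the curvature identity $R_E^{(1,1)} = d'_E d''_E + d''_E d'_E$ combined with the type-decomposition identities \eqref{easycommute'}.

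For the Leibniz rules, I would expand $\{\psi,\xi\} = \langle s^i,s^j\rangle\, \psi_i \wedge \bar\xi_j$ in a local frame of $E$ and apply the ordinary graded Leibniz rule to the scalar form $\psi_i \wedge \bar\xi_j$, using $d''\bar\xi_j = \overline{d'\xi_j}$ together with metric compatibility $d\langle s^i,s^j\rangle = \langle\nabla s^i,s^j\rangle + \langle s^i,\nabla s^j\rangle$ extended complex-linearly. Splitting $\nabla = \nabla' + \nabla''$ and recombining then yields, for $\psi\in\Omega^{p,q}(E)$,
\[
d''\{\psi,\xi\} = \{d''_E\psi,\xi\} + (-1)^{p+q}\{\psi, d'_E\xi\},
\]
and the analogous formula with $d'$ and $d''$ swapped. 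The key subtlety here is that since the bracket involves the conjugate $\bar\xi_j$, a $d''$ on the outside produces $d'_E$ on the second slot (and vice versa).

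Setting $\eta := d''f$, which has total degree $1$, the Leibniz rule together with $d''_E\eta = 0$ from \eqref{easycommute'} gives $d''\{\eta,\eta\} = -\{\eta, d'_E\eta\}$. Applying $d'$ next, with $d'_E\eta$ of total degree $2$, yields
\[
d'd''\{\eta,\eta\} = -\{d'_E\eta, d'_E\eta\} + \{\eta, d''_E d'_E\eta\}.
\]
Since $d''_E\eta = 0$ implies $d'_E d''_E\eta = 0$, the curvature decomposition reduces $d''_E d'_E\eta$ to $R_E^{(1,1)}\eta$, giving the claim. The principal obstacle I anticipate is bookkeeping: keeping the signs in the graded Leibniz rule and the interchange of primes on the second slot straight; the analytic content is entirely encoded in metric parallelism and the type decomposition of the curvature.
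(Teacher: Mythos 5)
Your proposal is correct and follows essentially the same route as the paper: expand $d'd''\{d''f,d''f\}$ via the graded Leibniz rule for $\{\cdot,\cdot\}$, use $d''_E d''f=0$ from (\ref{easycommute'}), and identify the remaining term $\{d''f, d''_E d'_E d''f\}$ with $\{d''f, R_E^{(1,1)}(d''f)\}$. The only (harmless) difference is that by invoking the stated formula $R_E^{(1,1)}=d'_E d''_E+d''_E d'_E$ directly you obtain the identity pointwise, whereas the paper completes the square to the full curvature $(d'_E+d''_E)^2$ and uses the wedge with $\omega^{n-2}/(n-2)!$ to discard the extra term $\{d''f,d'_E d'_E d''f\}$ for type reasons.
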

\begin{proof}
Repeatedly using the fact that $d''_{E} d'' f=0$ (cf.~(\ref{easycommute'})), 
\begin{eqnarray*}
d'  d''\{d'' f,d'' f \}
& =&-\{d'_{E}d'' f,  d'_{E}d'' f \}+ \{d'' f, d''_{E} d'_{E}d'' f \}
\\
& =&-\{d'_{E}d'' f,  d'_{E}d'' f \}+ \{d'' f, (d''_{E} d'_{E} +   d'_{E} d''_{E}+ {d''_E}^2) d'' f  \}. 
\end{eqnarray*}
Since $\{d'' f, d'_E d'_{E} d'' f\} \wedge \frac{ \omega^{n-2}}{(n-2)!}$ is an 
$(n-1,n+1)$-form and hence zero  for dimensional reasons, we can complete the square to obtain
\begin{eqnarray}
d'  d''\{d'' f, d'' f \}\wedge \frac{ \omega^{n-2}}{(n-2)!}
& =&\left(-\{d'_{E}d'' f,  d'_{E}d'' f \}+ \{d'' f, (d'_{E} + d''_{E} )^2 d'' f\}\right) \wedge \frac{ \omega^{n-2}}{(n-2)!} \nonumber\\
& =&\left(-\{d'_{E}d'' f,  d'_{E}d'' f \}+ \{d'' f, R_E^{(1,1)}( d'' f) \} \right)\wedge \frac{ \omega^{n-2}}{(n-2)!} \nonumber
\end{eqnarray}
which proves the first equation.
\end{proof}

\begin{lemma} \label{stc}
For any $E$-valued $(1,1)$-form $\phi$ on $M$,  
\[
-\{\phi, \phi\}\wedge \frac{ \omega^{n-2}}{(n-2)!}=4(|\phi|^2-|\mathsf{Trace}_\omega\phi|^2)\frac{ \omega^n}{n!}.
\]
\end{lemma}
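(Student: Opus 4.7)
The identity is pointwise and sesquilinear in $\phi$, so the first step is to reduce to a scalar statement. Picking a (real) orthonormal frame $\{s_i\}$ of $E$ at a point $p\in M$, the definition of $\{\cdot,\cdot\}$ gives $\{\phi,\phi\}=\sum_i \phi_i\wedge\bar\phi_i$, and both $|\phi|^2$ and $|\mathsf{Trace}_\omega \phi|^2$ split as a sum over $i$. Thus it suffices to establish the identity for a single scalar $(1,1)$-form $\phi=\phi_{\alpha\bar\beta}\,\theta^\alpha\wedge\bar\theta^\beta$, where $\{\theta^\alpha\}$ is a unitary coframe of $T^{*(1,0)}M$ at $p$ chosen so that $\omega=\sqrt{-1}\sum_\alpha \theta^\alpha\wedge\bar\theta^\alpha$.

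The key algebraic observation is that, setting $\omega_\alpha:=\sqrt{-1}\,\theta^\alpha\wedge\bar\theta^\alpha$, one has $\omega=\sum_\alpha \omega_\alpha$, the $\omega_\alpha$'s commute as $2$-forms, and $\omega_\alpha^2=0$; hence the multinomial expansion yields $\omega^n/n!=\omega_1\wedge\cdots\wedge\omega_n$ and more generally $\omega^{n-2}/(n-2)!=\sum_{|S|=n-2}\prod_{\beta\in S}\omega_\beta$. Consequently, when one expands $\phi\wedge\bar\phi$ and wedges with $\omega^{n-2}/(n-2)!$, only those $(2,2)$-terms of $\phi\wedge\bar\phi$ whose holomorphic and antiholomorphic index sets coincide as unordered pairs contribute.

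Next I would carry out this expansion. Writing $\phi\wedge\bar\phi=\phi_{\alpha\bar\beta}\overline{\phi_{\gamma\bar\delta}}\,\theta^\alpha\wedge\bar\theta^\beta\wedge\bar\theta^\gamma\wedge\theta^\delta$ and reordering to $\theta^\alpha\wedge\theta^\delta\wedge\bar\theta^\beta\wedge\bar\theta^\gamma$ (two sign flips), the surviving index patterns are (i) $(\beta,\gamma)=(\alpha,\delta)$ with $\alpha\neq\delta$, contributing $-\phi_{\alpha\bar\alpha}\overline{\phi_{\delta\bar\delta}}$, and (ii) $(\beta,\gamma)=(\delta,\alpha)$, contributing $+|\phi_{\alpha\bar\delta}|^2$ (the signs coming from permuting $\bar\theta^\alpha$ and $\bar\theta^\delta$ into the order $\theta^\alpha\wedge\bar\theta^\alpha\wedge\theta^\delta\wedge\bar\theta^\delta=-\omega_\alpha\wedge\omega_\delta$). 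Summing and using
\[
\sum_{\alpha\neq\delta}\phi_{\alpha\bar\alpha}\overline{\phi_{\delta\bar\delta}}=\Bigl|\sum_\alpha\phi_{\alpha\bar\alpha}\Bigr|^2-\sum_\alpha|\phi_{\alpha\bar\alpha}|^2,\qquad \sum_{\alpha\neq\delta}|\phi_{\alpha\bar\delta}|^2=\sum_{\alpha,\delta}|\phi_{\alpha\bar\delta}|^2-\sum_\alpha|\phi_{\alpha\bar\alpha}|^2,
\]
the diagonal terms cancel, giving
\[
-\phi\wedge\bar\phi\wedge\frac{\omega^{n-2}}{(n-2)!}=\Bigl(\sum_{\alpha,\delta}|\phi_{\alpha\bar\delta}|^2-\Bigl|\sum_\alpha\phi_{\alpha\bar\alpha}\Bigr|^2\Bigr)\frac{\omega^n}{n!}.
\]

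Finally, one matches this with the right-hand side of the lemma by comparing normalizations. In the unitary coframe $\{\theta^\alpha\}$, the Riemannian pointwise norm of the complex $(1,1)$-form $\phi$ satisfies $|\phi|^2=\tfrac14\sum_{\alpha,\delta}|\phi_{\alpha\bar\delta}|^2$ and $|\mathsf{Trace}_\omega\phi|^2=\tfrac14|\sum_\alpha\phi_{\alpha\bar\alpha}|^2$ (the factor $\tfrac14$ stemming from the fact that $|\theta^\alpha|^2=2$ relative to the underlying real metric, so a Hermitian pairing like $\phi_{\alpha\bar\delta}\overline{\phi_{\alpha\bar\delta}}|\theta^\alpha|^2|\bar\theta^\delta|^2$ picks up a factor of $4$). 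This produces exactly the factor of $4$ appearing in the statement. The main obstacle in carrying out this plan is the careful bookkeeping of signs when commuting the $\theta$'s and $\bar\theta$'s and the consistent identification of the normalization conventions between the complex-analytic components $\phi_{\alpha\bar\beta}$ and the Riemannian norms $|\phi|^2$ and $|\mathsf{Trace}_\omega\phi|^2$.
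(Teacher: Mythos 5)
Your computation is correct and is essentially the paper's own proof: the paper also works pointwise in a normal/unitary frame, uses the multinomial structure of $\omega^{n-2}/(n-2)!$ to isolate the index patterns $(\beta,\gamma)=(\alpha,\delta)$ and $(\beta,\gamma)=(\delta,\alpha)$, tracks the same signs, and cancels the diagonal terms to reach exactly your intermediate identity (the paper phrases it with coordinate components $\phi_{p\bar q}\,dz^p\wedge d\bar z^q$ and an explicit $(\sqrt{-1}/2)^2$ prefactor rather than a unitary coframe, and it suppresses the reduction over an orthonormal frame of $E$, which you spell out). The one wobble is your final normalization paragraph: with your own convention $\omega=\sqrt{-1}\sum_\alpha\theta^\alpha\wedge\bar\theta^\alpha$, a unitary coframe has $|\theta^\alpha|^2=1$, not $2$, so the stated provenance of the factor $\tfrac14$ is inconsistent; the factor $4$ in the lemma really records the paper's convention of reading $|\phi|^2=\sum_{p,q}|\phi_{p\bar q}|^2$ and $|\mathsf{Trace}_\omega\phi|^2=|\sum_p\phi_{p\bar p}|^2$ off the holomorphic coordinate components, where $\omega=\tfrac{\sqrt{-1}}{2}\sum dz^p\wedge d\bar z^p$ and $dz^p=\sqrt{2}\,\theta^p$, so the coordinate components are half the unitary-frame ones. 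Your asserted normalizations thus agree with the paper's, but the justification should be by this frame comparison (or by simply adopting the paper's component convention), not by $|\theta^\alpha|^2=2$; this is a convention point only and does not affect the substance of the argument, nor its use in Lemmas~\ref{aux'} and~\ref{aux}, where the trace term vanishes anyway.
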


\begin{proof}
Let $(z^p)$ be normal coordinates at $x \in M$ and let  $\phi_{p\bar q} dz^p \wedge d\bar z^q$.
  At $x$,
\[
\frac{\omega^n}{n!} = \left( \frac{\sqrt{-1}}{2} \right)^2 \left( \sum_p dz^p \wedge d\bar z^p \right)^2 \wedge \frac{\omega^{n-2}}{(n-2)!}.
\]
For  $p, q$ such that $p \neq q$, 
\[
s \neq p \mbox{ or } t \neq q \ \Rightarrow \ dz^p \wedge d\bar z^q \wedge d\bar z^s \wedge dz^t \wedge \left( \sum_j dz^j \wedge d\bar z^j \right)^{n-2} =0.
\]
For  $p=q$, 
\[
s \neq t\ \Rightarrow \ dz^p \wedge d\bar z^q \wedge d\bar z^s \wedge dz^t \wedge \left( \sum_j dz^j \wedge d\bar z^j \right)^{n-2} =0.
\]
Furthermore,
\begin{eqnarray*}
\phi_{p\bar q} \overline{\phi_{p \bar q}}  dz^p \wedge d\bar z^q \wedge d\bar z^p \wedge dz^q & = & \phi_{p\bar q} \overline{\phi_{p \bar q}}  dz^p \wedge d\bar z^p \wedge dz^q \wedge d\bar z^q
\\
\phi_{p\bar p} \overline{\phi_{q \bar q}}  dz^p \wedge d\bar z^p \wedge d\bar z^q \wedge dz^q & = & -\phi_{p\bar p} \overline{\phi_{q \bar q}}   dz^p \wedge d\bar z^p \wedge dz^q \wedge d\bar z^q.
\end{eqnarray*}
Thus,
\begin{eqnarray*}
 \left( \frac{\sqrt{-1}}{2} \right)^2 \{ \phi, \phi\} \wedge \frac{\omega^{n-2}}{(n-2)!} 
 & = &
 \left( \frac{\sqrt{-1}}{2} \right)^2 \left( \sum_{p,q,s,t} \phi_{p\bar q} \overline{\phi_{s\bar t}}
 dz^p \wedge d\bar z^q \wedge d\bar z^s \wedge dz^t  \right) \wedge 
 \frac{\omega^{n-2}}{(n-2)!}
  \\
  &= & 
  \sum_{p \neq q} \left( |\phi_{p\bar q}|^2-\phi_{p\bar p} \phi_{q \bar q}  \right) \frac{\omega^n}{n!}
  \\
   &= & 
  \sum_{p,q} \left( |\phi_{p\bar q}|^2-\phi_{p\bar p} \phi_{q \bar q}  \right) \frac{\omega^n}{n!}
  \\
  & = & \left( |\phi|^2- |\mathsf{trace}_\omega \phi|^2 \right)
   \frac{\omega^n}{n!}. \ \ \ \ \ \ \ \   \ \ \ \ \ \ \ \ \qedhere
\end{eqnarray*}
\end{proof}
\begin{lemma}\label{aux'}
For any harmonic map $f : M \rightarrow N $ from a   K\"ahler manifold to a Riemannian manifold, we have
\begin{eqnarray*}
-\{d'_{E}d'' f,  d'_{E}d'' f \} \wedge \frac{ \omega^{n-2}}{(n-2)!} & = &  
4\left|d'_{E}d'' f \right|^2 \frac{ \omega^n}{n!} .
\end{eqnarray*}
\end{lemma}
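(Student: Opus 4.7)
The plan is to deduce this immediately from the purely algebraic Lemma~\ref{stc} together with the harmonicity of $f$. Indeed, applying Lemma~\ref{stc} to the $E$-valued $(1,1)$-form $\phi = d'_E d'' f$ yields
\[
-\{d'_E d''f,\, d'_E d''f\}\wedge \frac{\omega^{n-2}}{(n-2)!} = 4\bigl(|d'_E d''f|^2 - |\mathsf{Trace}_\omega d'_E d''f|^2\bigr)\frac{\omega^n}{n!},
\]
so it suffices to establish that $\mathsf{Trace}_\omega d'_E d''f = 0$ whenever $f$ is harmonic.

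The key observation is that the trace of a $(1,1)$-form $\phi = \phi^i_{p\bar q}\, dz^p\wedge d\bar z^q\otimes \tfrac{\partial}{\partial f^i}$ with respect to $\omega$ is, up to a universal constant, precisely the coefficient extracted by wedging with $\omega^{n-1}/(n-1)!$. Working in normal holomorphic coordinates at a point $x\in M$ as in the proof of Lemma~\ref{stc}, one checks via the identity
\[
dz^p\wedge d\bar z^q \wedge \frac{\omega^{n-1}}{(n-1)!} = -2\sqrt{-1}\,\delta^{pq}\,\frac{\omega^n}{n!}
\]
that $\tfrac{\omega^{n-1}}{(n-1)!}\wedge \phi$ is a constant nonzero multiple of $\mathsf{Trace}_\omega(\phi)\,\tfrac{\omega^n}{n!}\otimes \tfrac{\partial}{\partial f^i}$. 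Combined with Lemma~\ref{trace}, which identifies
\[
\tau(f) = 2\sqrt{-1}\,\star\!\left(\frac{\omega^{n-1}}{(n-1)!}\wedge d'_E d'' f\right),
\]
the vanishing $\tau(f) = 0$ forces $\mathsf{Trace}_\omega d'_E d''f = 0$.

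Substituting this into the identity produced by Lemma~\ref{stc} gives the desired equality. I expect no real obstacle: everything reduces to pointwise linear algebra in normal coordinates once Lemmas~\ref{trace} and~\ref{stc} are available. The only minor bookkeeping point to be careful about is tracking the numerical constant in the relation between $\star\bigl(\tfrac{\omega^{n-1}}{(n-1)!}\wedge \phi\bigr)$ and $\mathsf{Trace}_\omega \phi$, but this has no bearing on the conclusion, since we only need the implication $\tau(f)=0 \Rightarrow \mathsf{Trace}_\omega d'_E d''f = 0$.
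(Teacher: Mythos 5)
Your proposal is correct and follows essentially the same route as the paper: the paper's proof is exactly to apply Lemma~\ref{stc} with $\phi = d'_E d'' f$ and to note that harmonicity together with Lemma~\ref{trace} forces $\mathsf{Trace}_\omega\, d'_E d'' f = 0$. The extra normal-coordinate computation you include to justify that wedging with $\omega^{n-1}/(n-1)!$ extracts the trace is a correct (and harmless) elaboration of a step the paper leaves implicit.
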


\begin{proof}
We apply Lemma~\ref{stc} with $\phi=d'_{E}d'' f$.   Since $f$ is harmonic, $\mbox{Tr}_{\omega} d'_{E}d'' f=0$ by  Lemma~\ref{trace}.
\end{proof}

\begin{lemma}\label{tree'}
For a harmonic map $f : M \rightarrow N $ from a  K\"ahler manifold to a Hermitian-negative Riemannian manifold, we have
\[
\{d''  f, R_E^{(1,1)}  (d''  f) \} \wedge \frac{\omega^{n-2}}{(n-2)!}= -2\, R_{i j k l}  d' f^i  \wedge d'' f^k  \wedge d' f^j \wedge  d'' f^l \wedge \frac{\omega^n}{n!}
\]
where $R_E^{(1,1)} $ is defined in Lemma~\ref{brac'}.
\end{lemma}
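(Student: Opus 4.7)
\textbf{Proof proposal for Lemma~\ref{tree'}.} The identity is purely algebraic — neither the harmonicity of $f$ nor the Hermitian-negativity of $N$ enters its derivation (these hypotheses appear only because the lemma feeds into the Bochner formula of Theorem~\ref{sampsonbochner}). My plan is to expand both sides in local coordinates and match them term by term.

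First, since $E = f^*(TN \otimes \CC)$ and $\nabla$ is pulled back from the Levi-Civita connection on $N$, the curvature $R_E = d_E^2$ is the pullback of the Riemann curvature tensor $R^N$. In coordinates $(y^i)$ on $N$ this yields the $\mathrm{End}(E)$-valued $2$-form
\[
(R_E)^i_{\ j} \;=\; \tfrac{1}{2}(R^N)^i_{\ jkl}\circ f \;\cdot\; df^k \wedge df^l.
\]
Decomposing $df^k = d'f^k + d''f^k$ and invoking the antisymmetry $(R^N)^i_{\ jkl} = -(R^N)^i_{\ jlk}$, I read off the $(1,1)$-piece
\[
(R_E^{(1,1)})^i_{\ j} \;=\; (R^N)^i_{\ jkl}\circ f \;\cdot\; d'f^k \wedge d''f^l.
\]

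Next, I treat $R_E^{(1,1)}$ as an endomorphism-valued form and apply it to the $E$-valued $(0,1)$-form $d''f = d''f^a \otimes \partial/\partial f^a$, producing an element of $\Omega^{1,2}(E)$. I then invoke Definition~\ref{bracket}. The key points are that $\overline{d'f^m} = d''f^m$ (because each $f^m$ is real-valued), and that the Riemannian pairing on $E$ contracts to give $h_{bi}(R^N)^i_{\ akl} = R_{bakl}$, lowering the first index of the Riemann tensor. The result is a scalar-valued $(2,2)$-form — a fourfold wedge of $d'f$'s and $d''f$'s with the fully-lowered Riemann tensor as coefficient.

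Finally, I wedge with $\omega^{n-2}/(n-2)!$ and rearrange the four $1$-forms into the standard alternating pattern $d'f \wedge d''f \wedge d'f \wedge d''f$ appearing on the right-hand side. Here I use the exterior-algebra sign rules together with the Riemann-tensor symmetries $R_{ijkl} = -R_{jikl} = -R_{ijlk} = R_{klij}$ to relabel dummy indices and collect terms. The main (and really only) obstacle is the combinatorial bookkeeping: the signs from permuting $1$-forms have to be tracked in concert with the sign changes produced by each symmetry of $R^N$, so that the stated coefficient (in particular the factor $-2$) comes out correctly. To keep this manageable I would compute at a single point of $M$ in normal holomorphic coordinates (so $g^{\alpha\bar\beta}=\delta^{\alpha\beta}$), and read off the coefficient of $\omega^n/n!$ directly, extending to the global identity by $\CC$-linearity.
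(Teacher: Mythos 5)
Your plan follows the same route as the paper's proof: the explicit formula you derive for $R_E^{(1,1)}$ as the pullback of the Riemann curvature (the $(1,1)$-part being $R^i_{\ jkl}\,d'f^k\wedge d''f^l$) is exactly what the paper obtains by computing $d'_Ed''_Ed''f$ and $d''_Ed'_Ed''f$ in normal coordinates, and the subsequent application of Definition~\ref{bracket}, the index lowering, and the evaluation against $\omega^{n-2}/(n-2)!$ are all as in the paper. You are also right that neither harmonicity nor Hermitian-negativity is used in this lemma.

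The gap is in your final step, where you claim the rearrangement can be done ``using the exterior-algebra sign rules together with the Riemann-tensor symmetries $R_{ijkl}=-R_{jikl}=-R_{ijlk}=R_{klij}$.'' That toolkit is not enough. The bracket produces the contraction pattern $R_{ijkl}\,d''f^i\wedge d'f^j\wedge d''f^k\wedge d'f^l$, so after wedging with $\omega^{n-2}/(n-2)!$ you obtain sums of terms of the shape $R(u,\bar u,v,\bar v)$ and $R(u,\bar v,v,\bar u)$, where $R(a,b,c,d)=R_{ijkl}a^ib^jc^kd^l$, $u=\partial f/\partial z^\alpha$, $v=\partial f/\partial z^\beta$; the right-hand side, by contrast, consists of terms of the shape $R(u,v,\bar u,\bar v)$ (the $Q_0$-pattern of Theorem~\ref{sampsonbochner}). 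For a tensor possessing only the pair symmetries you list, these are genuinely independent invariants: a totally antisymmetric $4$-tensor has all of those symmetries yet satisfies $R(u,\bar u,v,\bar v)-R(u,\bar v,v,\bar u)=-2R(u,v,\bar u,\bar v)$, so no relabeling of dummy indices can convert one pattern into the other. What bridges them is the first Bianchi identity, which gives $R(u,\bar u,v,\bar v)-R(u,\bar v,v,\bar u)=R(u,v,\bar u,\bar v)$; this is precisely the step the paper performs when it writes $R_{jilk}=-R_{jlki}+R_{jkli}$ before wedging with $\omega^{n-2}/(n-2)!$ and then treats the two resulting pieces separately (as in Lemma~\ref{brac'}'s sequel). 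So you must add the first Bianchi identity to your list of symmetries, and only then carry out the coefficient count -- which is delicate here in any case (note the displayed statement's $\omega^n/n!$ on the right should be read as $\omega^{n-2}/(n-2)!$, as in the last line of the paper's computation).
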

\begin{proof}
Let $(z^\alpha)$ (resp.~$(y^i)$) be normal coordinates at a point $x \in M$ (resp.~$f(x) \in N$).  Then
\begin{eqnarray*}
\nabla_{\frac{\partial}{\partial \bar z^\gamma}}  \nabla_{\frac{\partial}{\partial  z^\beta}} \frac{\partial}{\partial f^j} & = & \nabla_{\frac{\partial}{\partial \bar z^\gamma}}  \left( \frac{\partial f^k}{\partial z^\beta} \nabla_{\frac{\partial}{\partial  f^k}} \frac{\partial}{\partial f^j} \right)
\\
& = & \frac{\partial f^k}{\partial z^\beta}  \frac{\partial f^l}{\partial \bar z^\gamma} \nabla_{\frac{\partial}{\partial f^l}}  \nabla_{\frac{\partial}{\partial  f^k}} \frac{\partial}{\partial f^j} 
\end{eqnarray*}
and
\begin{eqnarray*}
d''_E d'_E d''f & = & d''_E d'_E \left( \frac{\partial f^j}{\partial \bar z^\alpha} d\bar z^\alpha \otimes \frac{\partial}{\partial f^j} \right) \\
& = & d'' d' \left( \frac{\partial f^j}{\partial \bar z^\alpha} d\bar z^\alpha \right) \otimes \frac{\partial}{\partial f^j} - \frac{\partial f^j}{\partial \bar z^\alpha} d\bar z^\alpha \wedge d\bar z^\gamma \wedge dz^\beta \otimes \nabla_{\frac{\partial}{\partial \bar z^\gamma}}  \nabla_{\frac{\partial}{\partial  z^\beta}} \frac{\partial}{\partial f^j} 
\\
& = & d'' d' \left( \frac{\partial f^j}{\partial \bar z^\alpha} d\bar z^\alpha \right) \otimes \frac{\partial}{\partial f^j} +\frac{\partial f^j}{\partial \bar z^\alpha} \frac{\partial f^k}{\partial z^\beta}  \frac{\partial f^l}{\partial \bar z^\gamma}  d\bar z^\alpha \wedge d z^\beta \wedge d\bar z^\gamma  \otimes 
\nabla_{\frac{\partial}{\partial f^l}}  \nabla_{\frac{\partial}{\partial  f^k}} \frac{\partial}{\partial f^j}.
\end{eqnarray*}
Similarly,
\begin{eqnarray*}
d'_E d''_E d''f  & = & d' d'' \left( \frac{\partial f^j}{\partial \bar z^\alpha} d\bar z^\alpha \right) \otimes \frac{\partial}{\partial f^j} - \frac{\partial f^j}{\partial \bar z^\alpha} \frac{\partial f^k}{\partial z^\beta}  \frac{\partial f^l}{\partial \bar z^\gamma}  d\bar z^\alpha \wedge d z^\beta \wedge  d\bar z^\gamma \otimes 
\nabla_{\frac{\partial}{\partial f^k}}  \nabla_{\frac{\partial}{\partial  f^l}} \frac{\partial}{\partial f^j}.
\end{eqnarray*}
Combining the above two equalities,
\begin{eqnarray*}
R_E^{(1,1)} (d''f) & = &   \frac{\partial f^j}{\partial \bar z^\alpha} \frac{\partial f^k}{\partial z^\beta}  \frac{\partial f^l}{\partial \bar z^\gamma}  d\bar z^\alpha \wedge dz^\beta \wedge d \bar z^\gamma  \otimes R^s_{\, jkl} \frac{\partial}{\partial f^s}.
\end{eqnarray*}
We compute
 \begin{eqnarray*}\label{dminor}
\{d''  f, R_E^{(1,1)}  (d''  f) \} &=&  R_{ijkl} \frac{\partial f^i}{\partial \bar z^\delta}  \frac{\partial f^j}{\partial z^\alpha} \frac{\partial f^k}{\partial \bar z^\beta}  \frac{\partial f^l}{\partial z^\gamma}  d\bar z^\delta \wedge dz^\alpha \wedge d\bar z^\beta \wedge dz^\gamma 
\\
&=&  R_{jilk}  \frac{\partial f^j}{\partial z^\alpha}  \frac{\partial f^i}{\partial \bar z^\delta}  \frac{\partial f^l}{\partial z^\gamma}\frac{\partial f^k}{\partial \bar z^\beta}   dz^\alpha \wedge d\bar z^\delta \wedge  dz^\gamma  \wedge d\bar z^\beta 
\\
&=&  (-R_{jlki}+R_{jkli}) \frac{\partial f^j}{\partial z^\alpha}  \frac{\partial f^i}{\partial \bar z^\delta} \frac{\partial f^l}{\partial z^\gamma} \frac{\partial f^k}{\partial \bar z^\beta}   dz^\alpha \wedge d\bar z^\delta \wedge  dz^\gamma  \wedge d\bar z^\beta.
\end{eqnarray*}
Since
\begin{eqnarray*}
\lefteqn{R_{jkli} \frac{\partial f^j}{\partial z^\alpha}  \frac{\partial f^i}{\partial \bar z^\delta} \frac{\partial f^l}{\partial z^\gamma} \frac{\partial f^k}{\partial \bar z^\beta}   dz^\alpha \wedge d\bar z^\delta \wedge  dz^\gamma  \wedge d\bar z^\beta \wedge \frac{\omega^{n-2}}{(n-2)!}
}\\
& = & R_{jkli} \frac{\partial f^j}{\partial z^\alpha}  \frac{\partial f^i}{\partial \bar z^\delta} \frac{\partial f^l}{\partial z^\gamma} \frac{\partial f^k}{\partial \bar z^\beta}   dz^\alpha \wedge d\bar z^\alpha \wedge  dz^\beta \wedge d\bar z^\beta \wedge \frac{\omega^{n-2}}{(n-2)!}
\\
&  & \ + R_{jkli} \frac{\partial f^j}{\partial z^\alpha}  \frac{\partial f^i}{\partial \bar z^\delta} \frac{\partial f^l}{\partial z^\gamma} \frac{\partial f^k}{\partial \bar z^\beta}   dz^\alpha \wedge d\bar z^\beta \wedge  dz^\beta  \wedge d\bar z^\alpha \wedge \frac{\omega^{n-2}}{(n-2)!}
\\
& = & 0,
 \end{eqnarray*}
we obtain
\begin{eqnarray*}
\{d''  f, R_E^{(1,1)}  (d''  f) \} \wedge \frac{\omega^{n-2}}{(n-2)!} 
& = &  -R_{jlki}\frac{\partial f^j}{\partial z^\alpha}  \frac{\partial f^i}{\partial \bar z^\alpha} \frac{\partial f^l}{\partial z^\beta} \frac{\partial f^k}{\partial \bar z^\beta}   dz^\alpha \wedge d\bar z^\alpha \wedge  dz^\beta  \wedge d\bar z^\beta\wedge \frac{\omega^{n-2}}{(n-2)!} 
\\
&  & \  -R_{jlki} \frac{\partial f^j}{\partial z^\alpha}  \frac{\partial f^i}{\partial \bar z^\beta} \frac{\partial f^l}{\partial z^\beta} \frac{\partial f^k}{\partial \bar z^\alpha}   dz^\alpha \wedge d\bar z^\beta \wedge  dz^\beta \wedge d\bar z^\alpha \wedge \frac{\omega^{n-2}}{(n-2)!}
\\
& = &   -2R_{jlki}\frac{\partial f^j}{\partial z^\alpha}  \frac{\partial f^i}{\partial \bar z^\alpha} \frac{\partial f^l}{\partial z^\beta} \frac{\partial f^k}{\partial \bar z^\beta}   dz^\alpha \wedge d\bar z^\alpha \wedge  dz^\beta  \wedge d\bar z^\beta\wedge \frac{\omega^{n-2}}{(n-2)!} 
\\
& = &   -2R_{jlki} d'f^j \wedge d''f^i \wedge d'f^l \wedge d''f^k \wedge \frac{\omega^{n-2}}{(n-2)!}.   \nonumber \qedhere
\end{eqnarray*}

 \end{proof}

\begin{definition}
A Riemannian manifold $N$ is said to be {\it Hermitian-negative} (resp. {\it strongly} {\it Hermitian negative}) if
\[
R_{ijkl} A^{i\bar l}A^{j \bar k} \leq 0 \ \  (\mbox{resp.~$<0$})
\]
for any Hermitian semi-positive matrix $A=\big( A^{i\bar l} \big)$.
\end{definition}

\begin{remark}
Locally symmetric spaces whose irreducible local factors are all non-compact or Euclidean type are Hermitian negative (cf.~\cite[Theorem 2]{sampson}).
\end{remark}

\begin{theorem}[Sampson] \label{sampsonmainthm}
 If
$f: M \rightarrow N$ is a harmonic map from a K\"ahler manifold into a Hermitian negative Riemannian manifold, then $f$ is pluriharmonic. \end{theorem}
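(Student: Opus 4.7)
The plan is to feed Sampson's Bochner formula (Theorem~\ref{sampsonbochner}) into Stokes' theorem on the compact K\"ahler manifold $M$, and use the Hermitian-negativity of $N$ to show the integrand has a sign. Concretely, since the LHS of the Bochner formula is an exact $2n$-form (it equals $d(d''\{d''f,d''f\}\wedge \omega^{n-2}/(n-2)!)$ because $d\omega=0$), Stokes gives
\[
\int_M 4 \bigl(|d'_E d''f|^2 + Q_0\bigr)\,\frac{\omega^n}{n!} \;=\; \int_M d' d''\{d''f,d''f\} \wedge \frac{\omega^{n-2}}{(n-2)!} \;=\; 0.
\]
So it suffices to prove $Q_0\ge 0$ pointwise, because then both non-negative terms under the integral vanish, and the vanishing of $|d'_E d''f|^2$ is exactly pluriharmonicity.

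The non-negativity of $Q_0$ is the one genuine computation. Working in normal coordinates at $x\in M$ so that $g^{\alpha\bar\delta}=\delta^{\alpha\bar\delta}$, define
\[
A^{i\bar l} \;:=\; g^{\alpha\bar\delta}\,\frac{\partial f^i}{\partial z^\alpha}\,\frac{\partial f^l}{\partial \bar z^\delta} \;=\; \sum_\alpha \frac{\partial f^i}{\partial z^\alpha}\,\overline{\frac{\partial f^l}{\partial z^\alpha}},
\]
where the second equality uses that $f^l$ is real so $\partial f^l/\partial \bar z^\alpha=\overline{\partial f^l/\partial z^\alpha}$. Then $A=UU^*$ with $U^{i\alpha}=\partial f^i/\partial z^\alpha$, so $A$ is Hermitian positive semi-definite. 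Regrouping the expression for $Q_0$ exactly pairs the two $g^{-1}$ factors with the appropriate $\partial f$'s to give
\[
Q_0 \;=\; -2\, R_{ijkl}\, A^{i\bar l}\, A^{j\bar k},
\]
which is non-negative precisely by the Hermitian-negative hypothesis on $N$.

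Combining these two ingredients finishes the proof: $Q_0\ge 0$ pointwise, integration against $\omega^n/n!$ gives zero on the left, and so $\int_M|d'_E d''f|^2\,\omega^n/n!=0$, whence $d'_E d''f\equiv 0$.

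The step I expect to be the only subtle one is the sign bookkeeping that identifies $Q_0$ with a contraction of $R_{ijkl}$ against $A^{i\bar l}A^{j\bar k}$ in the form required by the definition of Hermitian-negativity (symmetries of the Riemann tensor are needed to match the index pattern). Beyond that, the argument is essentially a one-line application of Stokes, and implicitly uses that $M$ is compact so that Stokes applies; for non-compact $M$ one would need a decay or cut-off hypothesis (as in later refinements of Sampson's theorem), but under Sampson's original compactness assumption the argument above is complete.
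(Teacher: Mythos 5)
Your proposal is correct and is essentially the paper's own argument: integrate Sampson's Bochner formula over the (implicitly compact) $M$, kill the exact left-hand side by Stokes, and conclude that both non-negative terms on the right vanish, giving $d'_E d''f\equiv 0$. The only detail you add beyond the paper is the explicit regrouping $Q_0=-2R_{ijkl}A^{i\bar l}A^{j\bar k}$ with $A=UU^*$ Hermitian positive semi-definite, which the paper leaves implicit in the definition of Hermitian-negativity (and which in fact works directly, without needing any curvature symmetries).
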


\begin{proof}
Integrate Sampson's Bochner formula over $M$. Applying Stoke's theorem results in the left hand side being 0.  The two terms on the right hand side are non-negative pointwise,  hence they must be identically equal to 0.  In particular, $d'_{E} d'' f=0$; i.e.~$f$ is is pluriharmonic.  
\end{proof}

 \subsection{Maps between K\"ahler manifolds} 

Let $f:M \rightarrow N$ be a smooth map between K\"ahler manifolds.  By decomposing 
\[
TN \otimes \CC=T^{(1,0)}N \oplus T^{(0,1)}N
\]
we get the decomposition of $E:=f^{-1}(TN \otimes \CC)$
as
\[
E=E' \oplus E'' \ \mbox{ where } \ E':=f^{-1} (T^{(1,0)}N) , \ \ E'':= f^{-1} (T^{(0,1)}N).
\]
Denote by $\Omega^{p,q}(E)$, $\Omega^{p,q}(E')$ and $\Omega^{p,q}(E'')$  the space of   $E$-, $E'$- and $E''$-valued $(p,q)$-forms respectively.  
If $(w^i)$ are local holomorphic coordinates in $N$, then
$\{\frac{\partial}{\partial f^i}  :=  \frac{\partial}{\partial w^i} \circ f, \
\frac{\partial}{\partial \bar f^i} := \frac{\partial}{\partial \bar w^i} \circ f \}$ 
is a local  frame  of $E$.
If $d'f$, $d'f'$ are as in Section~\ref{KtoR}, then
\[
d'f=\partial f+\partial \bar f, \ \ \ \ \ d''f=\bar \partial f+\bar \partial  \bar f, \ \ \ \ \ df=d'f+d''f=\partial f+\partial \bar f+\bar \partial f+\bar \partial  \bar f.
\]
where
\begin{eqnarray*}
\partial f=\partial f^i \frac{\partial}{\partial f^i} 
& & 
\bar \partial f=\bar \partial f^i\frac{\partial}{\partial f^i} 
\\
\partial \bar f= \partial\bar f^i \frac{\partial}{\partial \bar f^i} 
& & 
\bar \partial \bar f=\bar \partial \bar f^i \frac{\partial}{\partial \bar f^i}\\
\overline{\partial f} = \bar \partial \bar f \  & &  \overline{\bar \partial f}=\partial \bar f.
\end{eqnarray*}
Analogously, $d_\nabla=d'_E+d''_E$ is decomposed into the induced operators $\partial_{E'}$, $\bar\partial_{E'}$, $\partial_{E''}$, $\bar\partial_{E''}$.


A straightforward calculation yields
\begin{eqnarray} \label{easycommute}
\partial_{E'} \bar \partial f = - \bar \partial_{E'} \partial f & &  \partial_{E''} \bar \partial \bar f = - \bar \partial_{E''} \partial \bar f
\\
\partial_{E'} \partial f = 0  & &   \bar \partial_{E'} \bar \partial f = 0 \nonumber \\
 \partial_{E''} \partial \bar f = 0  & &   \bar \partial_{E''} \bar \partial \bar f = 0.
\end{eqnarray}

For any map $f : M \rightarrow N $ between  K\"ahler manifolds, we have  
\begin{equation} \label{hessian}
\left|\partial_{E''}\bar \partial \bar f \right|^2=\left|\bar  \partial_{E''}\partial \bar f \right|^2=\left|\partial_{E'}\bar \partial f \right|^2.
\end{equation}
Indeed, 
the left equality follows from (\ref{easycommute}) and the right from the fact that conjugation is an isometry.



%

\subsection{Siu's curvature} \label{KtoK}
\begin{definition}
Let $N$  be a K\"ahler manifold and $R$ its complexified curvature tensor. We say $N$  has \emph{negative (resp.~non-positive)  complex sectional curvature}, if
\[
R(V, \bar W, W, \bar V) <  0\, (\mbox{resp.}\leq 0) \  \ \forall   V, W \in TN^{\CC}.
\]

In \cite{siu}, Siu introduced the following notion of negative curvature.
Recall that 
for local holomorphic coordinates $(w^i)$   of a  K\"ahler manifold $N$, the curvature tensor is  of type (1,1) and is given explicitly by
\[
R_{i\bar j k \bar l} =-\frac{\partial^2 h_{i \bar j}}{\partial w^k \partial \bar w^l}+h^{p\bar q} \frac{\partial h_{k\bar q}}{\partial w^i} \frac{\partial h_{p \bar l}}{\partial \bar w^j}
\]    
where $h$ is the K\"ahler metric on $N$.
We say $N$ has \emph{strongly negative (resp.~strongly semi-negative) curvature} if 
\[R_{i \bar j k \bar l} (A^i \overline{B^j} - C^i \overline{D^j}) \overline{ (A^l \overline{B^k} - C^l \overline{D^k})}<0 \ \mbox{(resp.~$\leq 0$)}.
\]
for arbitrary complex numbers $A^i, B^i, C^i, D^i$ when $A^i \overline{B^j} - C^i \overline{D^j} \neq 0$ for at least one pair of indices $(i,j)$.  \end{definition}

 \begin{remark}
A K\"ahler manifold $N$ is strongly semi-negative  if and only if it has non-positive complex
sectional curvature (cf.~\cite[Theorem 4.4]{liu-sun-yau}).
 \end{remark}

%
%
%

\begin{lemma}\label{branch}
Let $N$ be a K\"ahler manifold with K\"ahler form $\omega$ and of strongly semi-negative curvature.  Let $M$ be another K\"ahler manifold and  $f : M \rightarrow N $ be a smooth map.
If  $Q:M \rightarrow \RR$ is defined by setting
\begin{eqnarray*}
Q \frac{ \omega^n}{n!} =-R_{i \bar j k \bar l}  \bar \partial f^i \wedge \partial \bar{f}^j \wedge  \partial f^k \wedge \bar \partial \bar f^l \wedge \frac{\omega^{n-2}}{(n-2)!},
\end{eqnarray*}
then $Q \geq 0$.
\end{lemma}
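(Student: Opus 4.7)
The plan is to show pointwise non-negativity of $Q$ by rewriting the right-hand side as a sum of Pl\"ucker-type quadratic forms to which the strongly semi-negative curvature hypothesis applies term-by-term.

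First I would recast the $(2,2)$-form. Introduce the $(2,0)$-form
\[
\Omega^{ab} := \partial f^a \wedge \partial \bar f^b, \qquad \text{so that} \quad \overline{\Omega^{ab}} = \bar \partial \bar f^a \wedge \bar \partial f^b.
\]
Since all four factors are $1$-forms, tracking the sign of the $3$-cycle permutation (which is even) gives
\[
\bar \partial f^i \wedge \partial \bar f^j \wedge \partial f^k \wedge \bar \partial \bar f^l = \Omega^{kj} \wedge \overline{\Omega^{li}}.
\]
Renaming the dummy indices $i \leftrightarrow k$ in the contraction with $R_{i \bar j k \bar l}$ and invoking the K\"ahler curvature symmetry $R_{i \bar j k \bar l} = R_{k \bar j i \bar l}$ (a consequence of the first Bianchi identity and the vanishing of $(3,1)$-components of the K\"ahler curvature), the defining expression for $Q$ rewrites as
\[
Q\, \frac{\omega^n}{n!} = -R_{i \bar j k \bar l}\, \Omega^{ij} \wedge \overline{\Omega^{lk}} \wedge \frac{\omega^{n-2}}{(n-2)!}.
\]

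Next I would work in holomorphic normal coordinates $(z^\alpha)$ at a point $x \in M$ and set
\[
A^i_\alpha := \frac{\partial f^i}{\partial z^\alpha}, \qquad B^j_\beta := \frac{\partial f^j}{\partial \bar z^\beta}, \qquad X^{ij}_{\alpha \beta} := A^i_\alpha \overline{B^j_\beta} - A^i_\beta \overline{B^j_\alpha},
\]
so that $\Omega^{ij} = \sum_{\alpha < \beta} X^{ij}_{\alpha \beta}\, dz^\alpha \wedge dz^\beta$. A direct computation yields
\[
dz^\alpha \wedge dz^\beta \wedge d\bar z^\alpha \wedge d\bar z^\beta \wedge \frac{\omega^{n-2}}{(n-2)!} = 4\, \frac{\omega^n}{n!} \qquad (\alpha \neq \beta),
\]
and only the diagonal index patterns $\gamma = \alpha,\, \delta = \beta$ (with $\alpha < \beta$, $\gamma < \delta$) survive in $\Omega^{ij} \wedge \overline{\Omega^{lk}} \wedge \omega^{n-2}/(n-2)!$. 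The expression for $Q$ therefore collapses to
\[
Q = -4 \sum_{\alpha < \beta} R_{i \bar j k \bar l}\, X^{ij}_{\alpha \beta}\, \overline{X^{lk}_{\alpha \beta}}.
\]

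Finally, I would recognize each $X^{ij}_{\alpha \beta}$ as the tensor $A^i \overline{B^j} - C^i \overline{D^j}$ appearing in Siu's definition, by choosing $A = A^{\cdot}_\alpha$, $B = B^{\cdot}_\beta$, $C = A^{\cdot}_\beta$, $D = B^{\cdot}_\alpha$; the conjugate $\overline{X^{lk}_{\alpha \beta}}$ likewise matches $\overline{A^l \overline{B^k} - C^l \overline{D^k}}$ with the same data. The strongly semi-negative hypothesis on $N$ then gives $R_{i \bar j k \bar l}\, X^{ij}_{\alpha \beta}\, \overline{X^{lk}_{\alpha \beta}} \leq 0$ for every pair $\alpha < \beta$, and summing produces $Q \geq 0$. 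The main anticipated obstacle is purely combinatorial: getting the sign of the $1$-form permutation (to identify the 4-form with $\Omega^{kj} \wedge \overline{\Omega^{li}}$) and the factor $4$ in the wedge identity correct -- any mis-sign here would flip the final inequality -- after which the argument is a direct reduction to the Siu condition.
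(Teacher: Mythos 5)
Your proposal is correct and follows essentially the same route as the paper: a pointwise computation in normal coordinates that collapses the $(2,2)$-form wedged with $\omega^{n-2}/(n-2)!$ into a sum of Siu-type quadratic expressions $R_{i\bar j k\bar l}\,X^{ij}_{\alpha\beta}\overline{X^{lk}_{\alpha\beta}}$, each $\leq 0$ by the strongly semi-negative hypothesis. Your bookkeeping via $\Omega^{ab}$, the index relabeling with the K\"ahler symmetry $R_{i\bar jk\bar l}=R_{k\bar ji\bar l}$, and the factor-$4$ wedge identity all check out (the paper's version differs only cosmetically, summing over all ordered pairs $\alpha,\beta$ and regrouping with the barred-index symmetry), and since only the sign of $Q$ is claimed, such normalization differences are immaterial.
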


\begin{proof}
At a point with normal coordinates in the domain
\begin{eqnarray*}
\lefteqn{
R_{i \bar j k \bar l}  \bar \partial f^i \wedge \partial \overline{f^j} \wedge  \partial f^k \wedge \bar \partial \,\overline{f^l} \wedge \frac{\omega^{n-2}}{(n-2)!}}\\
& = & \sum_{\alpha, \beta}(\sqrt{-1})^{n-2} R_{i \bar j k \bar l} \left(
-\partial_{\bar \alpha} f^i \partial_\alpha \overline{f^j} \partial_\beta f^k \partial_{\bar \beta} \overline{f^l}
	+ \partial_{\bar \alpha} f^i \partial_\beta \overline{f^j} 		\partial_\alpha f^k \partial_{\bar \beta} \overline{f^l} 
	\right.
\\
& & \ \left. +\partial_{\bar \beta} f^i \partial_\alpha \overline{f^j} \partial_\beta f^k \partial_{\bar \alpha} \overline{f^l}
	- \partial_{\bar \beta} f^i \partial_\beta \overline{f^j} 		\partial_\alpha f^k \partial_{\bar \alpha} \overline{f^l} \right) 
	\wedge \left( \wedge_\gamma (dz^\gamma \wedge d\bar z^\gamma) \right)
\\
&=&4 \sum_{\alpha, \beta} R_{i\bar j k \bar l}\left((\partial_{\bar \alpha} f^i)(\overline{\partial_\beta f^l})-(\partial_{\bar \beta}  f^i)(\overline{\partial  _\alpha f^l})\right)
\overline{\left((\partial_{\bar \alpha} f^j)(\overline{\partial_\beta f^k})-(\partial_{\bar \beta}  f^j)(\overline{\partial  _\alpha f^k})\right)} \frac{\omega^n}{n!}
\\
& = & 4 \sum_{\alpha, \beta} R_{i\bar j k \bar l}\left((\partial_{\bar \alpha} f^i)(\overline{\partial_\beta f^j})-(\partial_{\bar \beta}  f^i)(\overline{\partial  _\alpha f^j})\right)
\overline{\left((\partial_{\bar \alpha} f^l)(\overline{\partial_\beta f^k})-(\partial_{\bar \beta}  f^l)(\overline{\partial  _\alpha f^k})\right)} \frac{\omega^n}{n!}
\\
& \leq & 0.
\end{eqnarray*}
The last equality is because  $R_{\i \bar j k\bar l}=R_{i \bar l k \bar l}$, and the last inequality is because 
of the assumption that $N$ has strong semi-negative curvature.
\end{proof}

\subsection{Siu's Bochner Formula}

 \begin{theorem}[Siu-Bochner formula, \cite{siu} Proposition 2]
\label{siubochner}
For a harmonic map $f: M \rightarrow N$ between K\"{a}hler manifolds, 
\begin{eqnarray*}
 \partial  \bar \partial\{\bar \partial f,\bar \partial f \}\wedge \frac{ \omega^{n-2}}{(n-2)!} \nonumber
& = &    \left(4\left|\partial_{E'}\bar \partial f \right|^2 +Q\right) \frac{ \omega^n}{n!} 
. 
\end{eqnarray*}
\end{theorem}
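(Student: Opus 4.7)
The plan is to mirror the three-step proof of Sampson's formula (Theorem~\ref{sampsonbochner}), with $\bar\partial f \in \Omega^{0,1}(E')$ playing the role of $d''f$ and $\partial_{E'}$ the role of $d'_E$, while exploiting the K\"ahler structure of the target so that the curvature of $E'$ is of pure type $(1,1)$.

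First I would establish the analogue of Lemma~\ref{brac'}: for any smooth map $f:M\to N$ between K\"ahler manifolds,
\[
\partial\bar\partial\{\bar\partial f,\bar\partial f\}\wedge\frac{\omega^{n-2}}{(n-2)!}=\left(-\{\partial_{E'}\bar\partial f,\partial_{E'}\bar\partial f\}+\{\bar\partial f,R_{E'}^{(1,1)}(\bar\partial f)\}\right)\wedge\frac{\omega^{n-2}}{(n-2)!},
\]
where $R_{E'}^{(1,1)}=\partial_{E'}\bar\partial_{E'}+\bar\partial_{E'}\partial_{E'}$. The derivation parallels Lemma~\ref{brac'}: apply the Leibniz rule for $\{\cdot,\cdot\}$ under $\partial$ and $\bar\partial$, use $\bar\partial_{E'}\bar\partial f=0$ from~(\ref{easycommute}) repeatedly, and complete the square, noting that terms of the form $\{\bar\partial f,\partial_{E'}\partial_{E'}\bar\partial f\}\wedge \omega^{n-2}/(n-2)!$ have bidegree $(n-1,n+1)$ and hence vanish for dimensional reasons.

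Next I would apply Lemma~\ref{stc} with $\phi=\partial_{E'}\bar\partial f$, exactly as in Lemma~\ref{aux'}. The trace $\mathrm{Tr}_\omega\,\partial_{E'}\bar\partial f$ vanishes because it is the $E'$-component of $\mathrm{Tr}_\omega d'_E d''f$, which itself vanishes by harmonicity of $f$ via Lemma~\ref{trace}. This yields
\[
-\{\partial_{E'}\bar\partial f,\partial_{E'}\bar\partial f\}\wedge\frac{\omega^{n-2}}{(n-2)!}=4\,|\partial_{E'}\bar\partial f|^2\,\frac{\omega^n}{n!}.
\]

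Finally I would compute the curvature term by imitating Lemma~\ref{tree'}. Since $N$ is K\"ahler, the Chern connection on $T^{(1,0)}N$ has curvature of pure type $(1,1)$, expressible via the components $R_{i\bar j k\bar l}$. Expanding $R_{E'}^{(1,1)}(\bar\partial f)$ in local normal coordinates on $M$ and local holomorphic coordinates on $N$, and then applying the anti-symmetrization of wedge products once paired with $\omega^{n-2}/(n-2)!$, I expect to obtain
\[
\{\bar\partial f,R_{E'}^{(1,1)}(\bar\partial f)\}\wedge\frac{\omega^{n-2}}{(n-2)!}=-R_{i\bar j k\bar l}\,\bar\partial f^i\wedge\partial\bar f^j\wedge\partial f^k\wedge\bar\partial\bar f^l\wedge\frac{\omega^{n-2}}{(n-2)!}=Q\,\frac{\omega^n}{n!}.
\]
Combining the three steps gives Siu's formula. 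The main obstacle is this third step. Sampson's version dealt with the real Riemannian curvature $R_{ijkl}$, which mixes types; here the K\"ahler hypothesis forces the connection form on $E'$ to be of type $(1,0)$, collapsing several terms, and careful bookkeeping of the holomorphic/antiholomorphic wedge positions together with the bracket convention (pairing $\bar\partial f^i$ with $\overline{\bar\partial f^j}=\partial\bar f^j$) is required to reproduce exactly the form of $Q$ from Lemma~\ref{branch}.
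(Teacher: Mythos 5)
Your proposal follows essentially the same route as the paper: the bracket identity obtained by completing the square using $\bar\partial_{E'}\bar\partial f=0$ and the dimensional vanishing of the $(n-1,n+1)$-term (the paper's Lemma~\ref{brac}), Lemma~\ref{stc} with $\mathrm{Tr}_\omega\,\partial_{E'}\bar\partial f=0$ from harmonicity via Lemma~\ref{trace} (Lemma~\ref{aux}), and the normal-coordinate curvature computation identifying the remaining term with $Q$ from Lemma~\ref{branch} (Lemma~\ref{tree} and Corollary~\ref{tree'}); your sign conventions for the curvature differ from the paper's $R_{E'}=-(\partial_{E'}+\bar\partial_{E'})^2$ but are consistent. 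The third step you flag as the main obstacle is carried out in the paper exactly as you anticipate, so the plan is sound.
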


\begin{proof}
Combine Lemma~\ref{branch}, Lemma~\ref{brac}, Lemma~\ref{aux} and Corollary~\ref{tree'} below.
\end{proof}

The curvature operators of $E'$ and $E''$ are $R_{E'}=-(\partial_{E'} + \bar \partial_{E'} )^2$ and $R_{E''}=-(\partial_{E''} + \bar \partial_{E''} )^2$ respectively.

\begin{lemma}\label{brac}For any smooth map $f : M \rightarrow N $ between  K\"ahler manifolds, we have
\[
\partial  \bar \partial\{\bar \partial f,\bar \partial f \}\wedge \frac{ \omega^{n-2}}{(n-2)!}=\left(-\{\partial_{E'}\bar \partial f,  \partial_{E'}\bar \partial f \}- \{\bar \partial f, R_{E'}( \bar \partial f) \}\right)\wedge \frac{ \omega^{n-2}}{(n-2)!}
\]
and
\[
\partial  \bar \partial\{\bar \partial \bar{f},\bar \partial \bar{f} \}\wedge \frac{ \omega^{n-2}}{(n-2)!}
= \left(-\{\partial_{E''}\bar \partial \bar{f},  \partial_{E''}\bar \partial \bar{f} \}- \{\bar \partial \bar{f}, R_{E''}( \bar \partial \bar{f}) \}\right)\wedge \frac{ \omega^{n-2}}{(n-2)!}.
\] 
\end{lemma}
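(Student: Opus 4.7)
The plan is to mirror the proof of Lemma~\ref{brac'} in the finer splitting $E=E'\oplus E''$. For the first identity I would replace the operators $d'_E,d''_E$ by $\partial_{E'},\bar\partial_{E'}$, which is natural because $\bar\partial f\in\Omega^{0,1}(E')$; for the second identity I would replace them by $\partial_{E''},\bar\partial_{E''}$, since $\bar\partial\bar f\in\Omega^{0,1}(E'')$. The two workhorses are the Leibniz rule for the bracket $\{\cdot,\cdot\}$, which on the second (conjugate-linear) slot turns $\bar\partial$ into $\partial_{E'}$ on the underlying section and vice versa, together with the vanishing identities in (\ref{easycommute}), in particular $\bar\partial_{E'}\bar\partial f=0$ and $\bar\partial_{E''}\bar\partial\bar f=0$.

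First I would apply $\bar\partial$ to $\{\bar\partial f,\bar\partial f\}$. Leibniz plus $\bar\partial_{E'}\bar\partial f=0$ collapses this to $-\{\bar\partial f,\partial_{E'}\bar\partial f\}$. Applying $\partial$ next, Leibniz produces two terms: $-\{\partial_{E'}\bar\partial f,\partial_{E'}\bar\partial f\}$, which is already one of the pieces the lemma asks for, and $\{\bar\partial f,\bar\partial_{E'}\partial_{E'}\bar\partial f\}$, which we still must recognize as a curvature term.

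Next I would complete the square as in Lemma~\ref{brac'}: inside the inner slot of the second bracket I would add the three ``free'' summands $\partial_{E'}\bar\partial_{E'}$, $\partial_{E'}^2$, and $\bar\partial_{E'}^2$, upgrading $\bar\partial_{E'}\partial_{E'}$ to $(\partial_{E'}+\bar\partial_{E'})^2=-R_{E'}$. The first and third added terms vanish trivially from $\bar\partial_{E'}\bar\partial f=0$, while the middle one produces an $E'$-valued $(2,1)$-form on $M$; the corresponding bracket has bidegree $(1,3)$ on $M$, which exceeds $n$ in the anti-holomorphic slot, so its wedge with $\omega^{n-2}/(n-2)!$ vanishes on the $n$-complex-dimensional manifold $M$. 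This is the only nontrivial dimensional check.

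The second identity is proved by the same argument with $f$ replaced by $\bar f$, $E'$ replaced by $E''$, and $\bar\partial_{E'}\bar\partial f=0$ replaced by $\bar\partial_{E''}\bar\partial\bar f=0$; everything else is formally identical. The main obstacle, as already in Lemma~\ref{brac'}, is the careful tracking of Koszul signs, since $\bar\partial f$ has odd degree and the bracket $\{\cdot,\cdot\}$ is sesquilinear, together with verifying that the bidegree shifts behave as expected under conjugation in the second slot (recalling that conjugation swaps $(p,q)$ with $(q,p)$); beyond that the argument is formal manipulation of $E'$- (respectively $E''$-) valued forms.
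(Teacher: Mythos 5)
Your proposal is correct and follows essentially the same route as the paper's proof: expand $\partial\bar\partial\{\bar\partial f,\bar\partial f\}$ by the Leibniz rule using $\bar\partial_{E'}\bar\partial f=0$, then complete the square to $(\partial_{E'}+\bar\partial_{E'})^2=-R_{E'}$, with the leftover $\{\bar\partial f,\partial_{E'}^2\bar\partial f\}$ term dying because its wedge with $\omega^{n-2}/(n-2)!$ is of type $(n-1,n+1)$, and the second identity obtained by the identical computation with $\bar\partial\bar f\in\Omega^{0,1}(E'')$. No gaps beyond the sign bookkeeping you already flag.
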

\begin{proof}
By setting $\psi=\xi= \bar \partial f \in \Omega^{0,1}(E')$ in (\ref{bracket}) and repeatedly using the fact that $\bar \partial_{E'} \bar \partial f=0$ (cf.~(\ref{easycommute})), 
\begin{eqnarray*}
\partial  \bar \partial\{\bar \partial f,\bar \partial f \}
& =&-\{\partial_{E'}\bar \partial f,  \partial_{E'}\bar \partial f \}+ \{\bar \partial f, \bar \partial_{E'} \partial_{E'}\bar \partial f \}
\\
& =&-\{\partial_{E'}\bar \partial f,  \partial_{E'}\bar \partial f \}+ \{\bar \partial f, (\bar \partial_{E'} \partial_{E'} +   \partial_{E'} \bar \partial_{E'}+ \bar \partial_{E'}^2) \bar \partial f  \}. 
\end{eqnarray*}
Since $\{\bar \partial f, \partial_{E'}^2 \bar \partial f\} \wedge \frac{ \omega^{n-2}}{(n-2)!}$ is an 
$(n-1,n+1)$-form and hence zero  for dimensional reasons, we can complete the square to obtain
\begin{eqnarray}
\partial  \bar \partial\{\bar \partial f,\bar \partial f \}\wedge \frac{ \omega^{n-2}}{(n-2)!}
& =&\left(-\{\partial_{E'}\bar \partial f,  \partial_{E'}\bar \partial f \}+ \{\bar \partial f, (\partial_{E'} + \bar \partial_{E'} )^2 \bar \partial f\}\right) \wedge \frac{ \omega^{n-2}}{(n-2)!} \nonumber\\
& =&\left(-\{\partial_{E'}\bar \partial f,  \partial_{E'}\bar \partial f \}- \{\bar \partial f, R_{E'}( \bar \partial f) \} \right)\wedge \frac{ \omega^{n-2}}{(n-2)!} \nonumber
\end{eqnarray}
which proves the first equation.
The second equation follows by setting $\psi=\xi= \bar \partial \bar{f} \in \Omega^{0,1}(E'')$ in (\ref{bracket}) and following exactly the same computation.
\end{proof}

\begin{lemma}\label{aux}
For any harmonic map $f : M \rightarrow N $ between  K\"ahler manifolds, we have
\begin{eqnarray*}
-\{\partial_{E'}\bar \partial f,  \partial_{E'}\bar \partial f \} \wedge \frac{ \omega^{n-2}}{(n-2)!} & = &  
4\left|\partial_{E'}\bar \partial f \right|^2 \frac{ \omega^n}{n!} \\
-\{\partial_{E''}\bar \partial \bar{f},  \partial_{E''}\bar \partial \bar{f} \} \wedge \frac{ \omega^{n-2}}{(n-2)!} & = & 
4\left|\partial_{E''}\bar \partial \bar f \right|^2 \frac{ \omega^n}{n!}.
\end{eqnarray*}
\end{lemma}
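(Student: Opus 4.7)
The plan is to reduce this to Lemma~\ref{stc} (the pointwise identity relating $\{\phi,\phi\}\wedge\omega^{n-2}$ to $|\phi|^2$ and $|\mathsf{Trace}_\omega\phi|^2$) applied to the $(1,1)$-forms $\phi=\partial_{E'}\bar\partial f$ and $\phi=\partial_{E''}\bar\partial\bar f$ — exactly in the same spirit as the proof of Lemma~\ref{aux'} (which was the Riemannian-target analogue). So the structure will mirror Lemma~\ref{aux'} precisely, with the added observation that the K\"ahler decomposition $E=E'\oplus E''$ splits $d'_E d''f$ into two pieces living in different subbundles.

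First, I note that $\bar\partial f\in\Omega^{0,1}(E')$ and $\bar\partial\bar f\in\Omega^{0,1}(E'')$, so $\partial_{E'}\bar\partial f\in\Omega^{1,1}(E')\subset\Omega^{1,1}(E)$ and $\partial_{E''}\bar\partial\bar f\in\Omega^{1,1}(E'')\subset\Omega^{1,1}(E)$. Applying Lemma~\ref{stc} to each gives
\[
-\{\partial_{E'}\bar\partial f,\partial_{E'}\bar\partial f\}\wedge\frac{\omega^{n-2}}{(n-2)!}=4\bigl(|\partial_{E'}\bar\partial f|^2-|\mathsf{Trace}_\omega\partial_{E'}\bar\partial f|^2\bigr)\frac{\omega^n}{n!},
\]
and the analogous identity with $\bar\partial f$ replaced by $\bar\partial\bar f$ and $E'$ by $E''$. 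So it only remains to show that the trace terms vanish when $f$ is harmonic.

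For this, observe that under the splitting $E=E'\oplus E''$ we have the orthogonal decomposition
\[
d'_E d''f \;=\; \partial_{E'}\bar\partial f \;+\; \partial_{E''}\bar\partial\bar f,
\]
since $d''f=\bar\partial f+\bar\partial\bar f$ with summands in $\Omega^{0,1}(E')$ and $\Omega^{0,1}(E'')$ respectively, and $d'_E$ respects the bundle decomposition (restricting to $\partial_{E'}$ on $E'$-valued forms and to $\partial_{E''}$ on $E''$-valued forms). By Lemma~\ref{trace}, harmonicity of $f$ is equivalent to $\frac{\omega^{n-1}}{(n-1)!}\wedge d'_E d''f=0$, i.e.~$\mathsf{Trace}_\omega d'_E d''f=0$; projecting onto the $E'$- and $E''$-components separately yields
\[
\mathsf{Trace}_\omega \partial_{E'}\bar\partial f=0,\qquad \mathsf{Trace}_\omega \partial_{E''}\bar\partial\bar f=0.
\]
Plugging these into the identities from Lemma~\ref{stc} kills the trace terms and produces exactly the two claimed equalities.

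There is really no serious obstacle here — the identity is an immediate specialization of Lemma~\ref{stc}, and the only substantive point is the bundle-decomposition argument that allows one to separate the single harmonicity condition $\mathsf{Trace}_\omega d'_E d''f=0$ into two independent tracelessness statements on $E'$ and $E''$. If anything, I would be careful to verify that $d'_E$ indeed preserves the $E'\oplus E''$ decomposition (this uses that the K\"ahler condition on $N$ makes the Levi-Civita connection on $TN\otimes\CC$ preserve the $T^{(1,0)}N\oplus T^{(0,1)}N$ splitting, so the pullback connection preserves $E'\oplus E''$). Once that is checked, the lemma drops out.
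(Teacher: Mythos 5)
Your proposal is correct and follows essentially the same route as the paper: apply Lemma~\ref{stc} with $\phi=\partial_{E'}\bar\partial f$ and $\phi=\partial_{E''}\bar\partial\bar f$, then kill the trace terms using harmonicity via Lemma~\ref{trace}. The only difference is that you spell out the step the paper leaves implicit, namely that $d'_Ed''f$ decomposes as $\partial_{E'}\bar\partial f+\partial_{E''}\bar\partial\bar f$ under $E=E'\oplus E''$ so that $\mathsf{Trace}_\omega d'_Ed''f=0$ projects to the two separate tracelessness statements.
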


\begin{proof}
Apply Lemma~\ref{stc} with $\phi=\partial_{E'}\bar \partial f$ (resp.~$\phi=\partial_{E''}\bar \partial \bar f$).   Since $f$ is harmonic, $\mbox{Tr}_{\omega} \partial_{E'}\bar \partial f=0$ and $\mbox{Tr}_{\omega} \partial_{E''}\bar \partial \bar f=0$ by  Lemma~\ref{trace}.
\end{proof}

\begin{lemma}\label{tree}
For any smooth map $f : M \rightarrow N $ between  K\"ahler manifolds, we have
\[
\{\bar \partial  f, R_{E'}  (\bar \partial  f) \}  = R_{i \bar j k \bar l} \bar \partial f^i \wedge \partial \bar{f}^j \wedge  \partial f^k \wedge \bar \partial \bar{f}^l=\{\bar \partial \bar f, R_{E''}  (\bar \partial \bar f) \}.
\]
\end{lemma}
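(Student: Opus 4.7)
The approach is to mirror the proof of Lemma~\ref{tree'}, adapted to the K\"ahler-to-K\"ahler setting. The key observation is that the pulled-back Chern connection on $E'=f^{-1}T^{(1,0)}N$ behaves differently from a generic Riemannian pullback: the Chern curvature of $T^{(1,0)}N$ is of pure type $(1,1)$ on $N$, with $R\bigl(\frac{\partial}{\partial w^k},\frac{\partial}{\partial\bar w^l}\bigr)\frac{\partial}{\partial w^j}=R^i_{\ j k\bar l}\,\frac{\partial}{\partial w^i}$, but its pullback along the smooth map $f$ acquires $(2,0)$ and $(0,2)$ components through $f^*dw^k=\partial f^k+\bar\partial f^k$.

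I would first compute the action of the pullback curvature on the local frame of $E'$ and then apply the standard identity $\nabla^2(\omega\otimes s)=\omega\wedge\nabla^2 s$ to $\bar\partial f=\bar\partial f^j\otimes\frac{\partial}{\partial f^j}$. With the sign convention $R_{E'}=-(\partial_{E'}+\bar\partial_{E'})^2$, this yields
\[
R_{E'}(\bar\partial f)=-R^i_{\ j k\bar l}(f)\,\bar\partial f^j\wedge(\partial f^k+\bar\partial f^k)\wedge(\partial\bar f^l+\bar\partial\bar f^l)\otimes\frac{\partial}{\partial f^i}.
\]
Next, I evaluate the bracket using the Hermitian pairing $\langle\frac{\partial}{\partial f^i},\frac{\partial}{\partial f^j}\rangle=h_{i\bar j}\circ f$, recalling that conjugating the second bracket argument swaps $\partial\leftrightarrow\bar\partial$ and $f\leftrightarrow\bar f$ in each form factor. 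Using the identity $h_{p\bar i}\,\overline{R^i_{\ j k\bar l}}=R_{p\bar j l\bar k}$ -- which follows from $R^i_{\ j k\bar l}=h^{i\bar m}R_{j\bar m k\bar l}$ together with the Hermitian conjugation symmetry $\overline{R_{a\bar b c\bar d}}=R_{b\bar a d\bar c}$ -- and relabeling indices, one obtains
\[
\{\bar\partial f,R_{E'}(\bar\partial f)\}=-R_{i\bar j k\bar l}\,\bar\partial f^i\wedge\partial\bar f^j\wedge(\partial\bar f^l+\bar\partial\bar f^l)\wedge(\partial f^k+\bar\partial f^k).
\]

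Expanding the two factors in parentheses gives pieces of bidegrees $(3,1)$, $(2,2)$, $(2,2)$, and $(1,3)$. The $(3,1)$ piece carries the factor $\partial\bar f^j\wedge\partial\bar f^l$, antisymmetric in $(j,l)$, while the K\"ahler symmetry $R_{i\bar j k\bar l}=R_{i\bar l k\bar j}$ makes the coefficient symmetric there, so this piece vanishes; the $(1,3)$ piece vanishes analogously via $R_{i\bar j k\bar l}=R_{k\bar j i\bar l}$ paired against $\bar\partial f^i\wedge\bar\partial f^k$. One of the two $(2,2)$ pieces also vanishes by the same mechanism, and the surviving one, after reordering wedge factors (each swap of one-forms contributing a sign), equals the claimed $R_{i\bar j k\bar l}\,\bar\partial f^i\wedge\partial\bar f^j\wedge\partial f^k\wedge\bar\partial\bar f^l$.

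For the second equality, I would run the parallel computation using the Chern connection on $T^{(0,1)}N=\overline{T^{(1,0)}N}$ (whose curvature is the complex conjugate of that on $T^{(1,0)}N$) and the Hermitian pairing on $E''$. Alternatively, one may simply note that the middle expression is real -- it equals its own complex conjugate after relabeling via $\overline{R_{i\bar j k\bar l}}=R_{j\bar i l\bar k}$ -- so the conjugate computation necessarily produces the same form. The main obstacle throughout is the careful sign and index bookkeeping: the Hermitian conjugation in the bracket, the nonzero $(2,0)$ and $(0,2)$ components of the pullback curvature (which make the intermediate expression much larger than the final four-term answer), and the K\"ahler symmetries that cause most of those extra terms to collapse.
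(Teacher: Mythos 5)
Your proposal is correct and follows the same basic route as the paper---express the curvature of the pulled-back connection on $E'$ in local coordinates, apply it to the frame $\tfrac{\partial}{\partial f^j}$, and evaluate the bracket in normal coordinates---but you are more careful at the one step where care is genuinely needed. The paper's proof simply writes $R_{E'}(\tfrac{\partial}{\partial f^j})=R^s_{\ jk\bar l}\,\partial f^k\wedge\bar\partial\bar f^l\otimes\tfrac{\partial}{\partial f^s}$, which coincides with the full pullback curvature only when $f$ is holomorphic; for a general smooth map the curvature is $R^s_{\ jk\bar l}\,df^k\wedge d\bar f^l$ and carries the $(2,0)$, $(0,2)$ and second $(1,1)$ components you keep track of. Your symmetry argument---eliminating the $(3,1)$, $(1,3)$ and the spurious $(2,2)$ contributions via $R_{i\bar jk\bar l}=R_{i\bar lk\bar j}=R_{k\bar ji\bar l}$---is exactly what justifies the paper's shortcut, and it moreover shows the statement holds as an identity of forms, not merely after wedging with $\omega^{n-2}/(n-2)!$ (which is all that is used downstream). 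Two minor remarks: the overall sign depends on the convention relating $R^i_{\ jk\bar l}$ to the curvature form, and your bookkeeping is consistent with the stated formula, so no issue there; but your ``alternative'' justification of the second equality is shaky as phrased, since $\bar\partial\bar f=\overline{\partial f}$, so $\{\bar\partial\bar f,R_{E''}(\bar\partial\bar f)\}$ is naturally the conjugate of $\{\partial f,R_{E'}(\partial f)\}$ rather than of $\{\bar\partial f,R_{E'}(\bar\partial f)\}$---your primary suggestion, the parallel direct computation on $E''$ (which is what the paper does), is the safe route and goes through.
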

\begin{proof}
Using normal coordinates, we compute
 \begin{eqnarray*}\label{dminor}
\{\bar \partial  f, R_{E'}  (\bar \partial  f) \}  
&=&
\{\bar \partial f^i\frac{\partial}{\partial f^i}, R_{E'}(\bar \partial f^j\frac{\partial}{\partial f^j})\} \nonumber \\
&=&
\{\bar \partial f^i\frac{\partial}{\partial f^i}, \bar \partial f^j \wedge R_{E'}(\frac{\partial}{\partial f^j})\} \nonumber \\
&=&
\{\bar \partial f^i\frac{\partial}{\partial f^i}, \bar \partial f^j \wedge R^s_{  j k \bar l}\partial f^k \wedge \overline{\partial f^l} \frac{\partial}{\partial f^s}\} \nonumber\\
 &=&R_{i \bar j \bar k l} \bar \partial f^i \wedge \partial \bar{f}^j \wedge  \bar \partial \bar f^k \wedge \partial f^l \\
 &=& R_{i \bar j k \bar l}  \bar \partial f^i \wedge \partial \bar{f}^j \wedge  \partial f^k \wedge \bar \partial \bar f^l 
  \nonumber
 \end{eqnarray*}
 which proves the first equality.  The second equality is proved similarly:
\begin{eqnarray*}\label{emajor}
\{\bar \partial  \bar{f}, R_{E''}  (\bar \partial  \bar{f}) \}  &=&
\{\bar \partial \bar{f}^i\frac{\partial}{\partial \bar{f}^i}, R_{E''}(\bar \partial \bar{f}^j\frac{\partial}{\partial \bar{f}^j})\} \nonumber \\ &=&
\{\bar \partial \bar{f}^i\frac{\partial}{\partial \bar{f}^i}, \bar \partial \bar{f}^j \wedge R_{E''}(\frac{\partial}{\partial \bar{f}^j})\} \nonumber \\
&=&\{\bar \partial \bar{f}^i\frac{\partial}{\partial \bar{f}^i}, \bar \partial \bar{f}^j \wedge R^{\bar s}_{\bar j \bar k l}\partial \bar{f}^k \wedge \bar \partial f^l  \frac{\partial}{\partial \bar{f}^s} \} \\
 &=&R_{\bar i  j k \bar l}  \bar \partial \bar{f}^i \wedge \partial f^j \wedge \bar \partial f^k \wedge \partial \bar{f}^l  \nonumber \\ 
  &=&R_{j  \bar i k \bar l}   \partial f^j \wedge \bar \partial \bar{f}^i \wedge \bar \partial f^k \wedge \partial \bar{f}^l  \nonumber \\
    &=&R_{i  \bar j k \bar l}   \partial f^i \wedge \bar \partial \bar{f}^j \wedge \bar \partial f^k \wedge \partial \bar{f}^l  \nonumber \\ 
  &=&R_{i \bar j k \bar l}  \bar \partial f^i \wedge \partial \bar{f}^j \wedge  \partial f^k \wedge \bar \partial \bar f^l. 
  \nonumber
 \ \ \ \ \ \ \ \  \qedhere
\end{eqnarray*}
 \end{proof}
 \begin{corollary}\label{tree'}
For any smooth map $f : M \rightarrow N $ between  K\"ahler manifolds, we have
\[
-\{\bar \partial  f, R_{E'}  (\bar \partial  f) \}  \wedge \frac{\omega^{n-2}}{(n-2)!}=Q \frac{\omega^n}{n!}.
\]
\end{corollary}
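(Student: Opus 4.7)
The plan is to derive the corollary directly by combining Lemma~\ref{tree} with the defining relation for $Q$ introduced in Lemma~\ref{branch}. No new geometric input is needed; the work is purely bookkeeping.

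First I would quote the pointwise identity from Lemma~\ref{tree}, namely
\[
\{\bar \partial f, R_{E'}(\bar \partial f)\} \;=\; R_{i \bar j k \bar l}\, \bar \partial f^i \wedge \partial \bar f^j \wedge \partial f^k \wedge \bar \partial \bar f^l,
\]
which realises the left-hand bracket as a $(2,2)$-form with explicit coefficients in the components of $df$. Next I would wedge both sides with $-\frac{\omega^{n-2}}{(n-2)!}$, producing a top-degree form on $M$ on each side. The right-hand side is then, by the very definition of $Q$ in Lemma~\ref{branch},
\[
-R_{i \bar j k \bar l}\, \bar \partial f^i \wedge \partial \bar f^j \wedge \partial f^k \wedge \bar \partial \bar f^l \wedge \frac{\omega^{n-2}}{(n-2)!} \;=\; Q\,\frac{\omega^n}{n!}.
\]
Equating the two expressions yields the claimed identity
\[
-\{\bar \partial f, R_{E'}(\bar \partial f)\} \wedge \frac{\omega^{n-2}}{(n-2)!} \;=\; Q\,\frac{\omega^n}{n!}.
\]

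There is no substantive obstacle: the corollary is essentially a renaming step whose only content is matching the wedge expression in Lemma~\ref{tree} to the normalisation convention used to define $Q$ in Lemma~\ref{branch}. The one thing worth a line of commentary is that, because the form $R_{i \bar j k \bar l}\, \bar \partial f^i \wedge \partial \bar f^j \wedge \partial f^k \wedge \bar \partial \bar f^l$ has type $(2,2)$, its wedge with $\omega^{n-2}/(n-2)!$ is indeed a top form, so both sides live in $\Omega^{n,n}(M)$ and the identification with a multiple of $\omega^n/n!$ is well defined.
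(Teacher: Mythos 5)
Your proposal is correct and matches the paper's own argument, which likewise just combines Lemma~\ref{tree} with the definition of $Q$ in Lemma~\ref{branch}; your extra remark about the $(2,2)$-type making the wedge a top form is harmless bookkeeping. Nothing further is needed.
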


\begin{proof}
Combine Lemma~\ref{tree} with the definition of $Q$ given in Lemma~\ref{branch}.
\end{proof}

%

\begin{theorem} \label{siumainthm}
Suppose $M$ and $N$ are compact K\"ahler manifolds and the curvature of $N$ is strongly semi-negative. If
$f: M \rightarrow N$ is a harmonic map, then $f$ is pluriharmonic.  If, in addition, the curvature of $N$ is strongly negative and  the rank$_\RR df \geq 3$ at some point of $M$, then f is either  holomorphic or conjugate holomorphic.
\end{theorem}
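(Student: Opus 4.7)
The plan is to integrate Siu's Bochner formula (Theorem~\ref{siubochner}) over the compact K\"ahler manifold $M$. Since $d\omega = 0$, the left-hand side equals $d\bigl(\bar\partial\{\bar\partial f,\bar\partial f\}\wedge\tfrac{\omega^{n-2}}{(n-2)!}\bigr)$, which is $d$-exact, so by Stokes' theorem it integrates to $0$. On the right, $|\partial_{E'}\bar\partial f|^2 \geq 0$ trivially, and $Q \geq 0$ pointwise by Lemma~\ref{branch} under the strongly semi-negative hypothesis. Hence both terms vanish identically on $M$. In particular $\partial_{E'}\bar\partial f = 0$, and by complex conjugation (using that $\overline{\partial_{E'}\bar\partial f} = -\partial_{E''}\bar\partial \bar f$ via~(\ref{easycommute})) also $\partial_{E''}\bar\partial \bar f = 0$. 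Under the decomposition $E = E' \oplus E''$ the operator $d'_E$ splits as $\partial_{E'} \oplus \partial_{E''}$, so
\[
d'_E d''f = \partial_{E'}\bar\partial f + \partial_{E''}\bar\partial \bar f = 0,
\]
which is the pluriharmonicity of $f$.

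For the strongly negative case, I upgrade the pointwise analysis. Re-examining the computation of $Q$ in the proof of Lemma~\ref{branch}, strong negativity of $N$ forces the tensor
\[
A^{ij}_{\alpha\beta} := \partial_{\bar\alpha}f^i\,\overline{\partial_\beta f^j} - \partial_{\bar\beta}f^i\,\overline{\partial_\alpha f^j}
\]
to vanish pointwise for all $i, j, \alpha, \beta$; equivalently, for each fixed pair $(i,j)$ the complex vectors $(\partial_{\bar\alpha}f^i)_\alpha$ and $(\overline{\partial_{\bar\alpha}f^j})_\alpha$ in $\CC^m$ are linearly dependent at every point of $M$.

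The key algebraic step, due to Siu, is that at a point $p$ where $\rank_\RR df \geq 3$ this family of linear dependences can only be consistent if either $\bar\partial f(p) = 0$ (hence $\partial\bar f(p) = 0$ by conjugation, so $f$ is holomorphic at $p$) or $\partial f(p) = 0$ (hence $\bar\partial\bar f(p) = 0$, so $f$ is conjugate-holomorphic at $p$). The rank hypothesis is essential: in rank $\leq 2$ the dependences can be satisfied in a genuinely ``mixed'' way with neither alternative forced. Once the dichotomy is established on the open set $U = \{\rank_\RR df \geq 3\}$, real-analyticity of the pluriharmonic map $f$ (the elliptic system has analytic coefficients when the target metric is analytic) together with the unique continuation principle propagate the vanishing of $\bar\partial f$ or of $\partial f$ to all of $M$; connectedness of $M$ and the fact that both alternatives can hold simultaneously only where $df = 0$ then guarantee that exactly one of the two global alternatives occurs.

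The principal obstacle is the pointwise algebraic step in the third paragraph: extracting the sharp holomorphic/anti-holomorphic dichotomy at a point from the vanishing of $A^{ij}_{\alpha\beta}$ together with the rank-at-least-three hypothesis is Siu's central insight and genuinely requires linear-algebraic work beyond the integration machinery that settles pluriharmonicity. The propagation from a single point to all of $M$ is routine once the analyticity is invoked, but the underlying algebra is delicate and is precisely where the hypothesis $\rank_\RR df \geq 3$ is consumed.
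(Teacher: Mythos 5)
Your treatment of the first assertion coincides with the paper's: integrate Theorem~\ref{siubochner} over the compact manifold, kill the left-hand side by Stokes' theorem (it is exact since $d\omega=0$), and use $Q\geq 0$ from Lemma~\ref{branch} to conclude $\partial_{E'}\bar\partial f=0$. Your extra step --- $\overline{\partial_{E'}\bar\partial f}=\bar\partial_{E''}\partial\bar f=-\partial_{E''}\bar\partial\bar f$, so that both components of $d'_Ed''f=\partial_{E'}\bar\partial f+\partial_{E''}\bar\partial\bar f$ vanish --- is a correct elaboration of what the paper compresses into ``i.e.~$f$ is pluriharmonic'' (compare (\ref{hessian})), and the splitting of $d'_E$ you use is legitimate because the pulled-back Levi--Civita connection of a K\"ahler target preserves $E'\oplus E''$.

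For the second assertion you follow the paper's outline: strong negativity plus $Q\equiv 0$ forces the vanishing of the tensor $A^{ij}_{\alpha\beta}$ from the proof of Lemma~\ref{branch}, and the hypothesis $\rank_\RR df\geq 3$ then gives the local dichotomy $\bar\partial f=0$ or $\partial f=0$. You leave that pointwise linear-algebra step unproved, but so does the paper (``by the definition of $Q$''), so at the level of these notes that is not a defect, though it is the real content of Siu's argument. The genuine gap is your propagation step: you invoke real-analyticity of $f$ and unique continuation ``when the target metric is analytic'', but no analyticity is assumed --- a K\"ahler metric is merely smooth, so elliptic regularity gives only $C^\infty$ and this route is unavailable as stated. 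The standard repair (and what the paper's terse ``hence $\bar\partial f=0$ in all of $M$'' tacitly invokes, following Siu) uses pluriharmonicity: restricted to any local complex curve in $M$ the map is harmonic, and on a curve there are no $(0,2)$-forms, so $\partial\bar f$ (equivalently $\bar\partial f$) becomes a holomorphic section of the Koszul--Malgrange holomorphic bundle along that curve; hence its zeros on each curve are isolated unless it vanishes identically there, and an open--closed connectedness argument spreads the vanishing from the open set where the rank hypothesis holds to all of $M$. Replacing your analyticity appeal by this (or by a direct citation of Siu's unique-continuation argument) closes the gap; a minor further point is an index slip in your ``equivalently'' sentence, where the dependent vectors should be $(\partial_{\bar\alpha}f^i)_\alpha$ and $(\overline{\partial_{\alpha}f^j})_\alpha$, although the displayed tensor $A^{ij}_{\alpha\beta}$ is the correct one.
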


\begin{proof}
Integrate Siu's Bochner formula over $M$. Applying Stoke's theorem results in the left hand side being 0.  The two terms on the right hand side are non-negative pointwise,  hence they must be identically equal to 0.  In particular, $\partial_{E'} \bar \partial f=0$; i.e.~$f$ is is pluriharmonic.  If the rank is $\geq 3$ at some point $x$, $\bar \partial f=0$ in some neighborhood of $x$ by the definition of $Q$. Hence $\bar \partial f=0$ in all of $M$.
\end{proof}

\subsection{Variations of the Siu and Sampson Formulas}

The following is a variation of the Sampson's Bochner Formula. For harmonic metrics, this is due to  Mochizuki (cf. \cite[Proposition 21.42]{mochizuki-memoirs}).  

 \begin{theorem} \label{mochizukibochnerformula'}
For a harmonic map $f: M \rightarrow N$ from a K\"{a}hler manifold to a Riemannian manifold, 
\begin{eqnarray*}
  d \{d'_E  d'f,  d''f -  d'f\} \wedge \frac{ \omega^{n-2}}{(n-2)!} 
 &= &  8\left(\left|d'_E d''f\right|^2+Q_0   \right)  \wedge \frac{\omega^n}{n!}.
  \end{eqnarray*}
\end{theorem}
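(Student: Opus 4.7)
The plan is to mimic the strategy of the proof of Sampson's Bochner formula (Theorem~\ref{sampsonbochner}), which rests on three ingredients: a derivation rule for $d$ acting on the bracket $\{\cdot,\cdot\}$, the commutation relations in~(\ref{easycommute'}), and dimension-counting against $\omega^{n-2}/(n-2)!$. The new feature compared with Sampson's formula is that the form $d''f-d'f$ is not of pure bi-degree, forcing one to separately track its $(1,0)$ and $(0,1)$ components when applying the Leibniz rule.

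First, I would split the LHS using the linearity of the bracket in its first argument and its conjugate-linearity in the second:
\[
\{d'_E d'f,\,d''f-d'f\}=\{d'_E d'f,\,d''f\}-\{d'_E d'f,\,d'f\}.
\]
Each piece now has pure bi-degree, and on applying $d=d'+d''$ only those components that land in bi-degree $(2,2)$ can survive the wedge with $\omega^{n-2}/(n-2)!$. Thus at most one of $d'$ or $d''$ contributes on each term, and most of the four potential pieces vanish automatically for purely type-theoretic reasons.

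Next, on the surviving pieces, I would apply the Leibniz identity for the bracket (the one implicit in the proof of Lemma~\ref{brac'}): acting by $d'$ on $\{\psi,\xi\}$ produces $d'_E$ on the first slot and $d''_E$ on the second, while $d''$ acts with these roles reversed — a swap forced by the complex conjugation built into the second argument of $\{\cdot,\cdot\}$. I would then simplify using~(\ref{easycommute'}), namely $d'_E d'f=0$, $d''_E d''f=0$, $d''_E d'f=-d'_E d''f$, together with $d''_E d'_E+d'_E d''_E=R_E^{(1,1)}$, so that every mixed second derivative is rewritten as a curvature term.

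After the cancellations, the expression reduces to a fixed linear combination of $\{d'_E d''f,\,d'_E d''f\}$ and $\{d''f,\,R_E^{(1,1)} d''f\}$, both of which are $(2,2)$-forms. The first is evaluated by Lemma~\ref{stc} with $\phi=d'_E d''f$, invoking $\mathsf{Trace}_\omega(d'_E d''f)=0$ (a consequence of harmonicity via Lemma~\ref{trace}); this identifies it as a multiple of $|d'_E d''f|^2\,\omega^n/n!$. The second is evaluated exactly as in the proof of Lemma~\ref{tree'}, converting $R_{ijkl}\,d'f^i\wedge d''f^k\wedge d'f^j\wedge d''f^l\wedge\omega^{n-2}/(n-2)!$ into $Q_0\,\omega^n/n!$ via the algebraic symmetries of $R^N$. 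The key subtlety I expect is in the relative sign between the contributions of $d''f$ and $-d'f$ inside the bracket, which must combine constructively on the norm-squared term and additively on the curvature term to produce the overall coefficient $8$ on the right-hand side — twice what appears in Sampson's formula.
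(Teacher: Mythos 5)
There is a genuine gap, and it sits at the very first step. You split the bracket $\{d'_E d'f,\,d''f-d'f\}$ taking the statement at face value, while at the same time listing $d'_E d'f=0$ among the relations you intend to use. These cannot coexist: by (\ref{easycommute'}) the form $d'_E d'f$ vanishes identically, so the expression you start from is identically zero and your scheme would end up asserting $0=8\left(|d'_E d''f|^2+Q_0\right)\omega^n/n!$, which is false in general. The first argument of the bracket should be read as $d''_E d'f$ (this is what the actual proof computes with; the displayed statement has a slip), and the crucial opening move --- absent from your proposal --- is precisely to use (\ref{easycommute'}) in the form $d''_E d'f=-d'_E d''f$ to rewrite the left-hand side as $d\{d'_E d''f,\,d'f-d''f\}\wedge \omega^{n-2}/(n-2)!$. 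Only after this identification is the first slot the $(1,1)$-form to which your type-counting and Leibniz steps actually apply; with $d'_E d'f$ in that slot your ``surviving pieces'' are brackets of a vanishing $(2,0)$-form and the whole computation collapses.

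Once that correction is made, your outline essentially retraces the intended argument, but with one soft spot: you leave the coefficient $8$ and the relative signs as something you ``expect'' to combine constructively. In the paper's proof this is not left to hope: after the two type-theoretic vanishings, the surviving contributions are $d'\{d'_E d''f, d'f\}$ and $-d''\{d'_E d''f, d''f\}$ (wedged with $\omega^{n-2}/(n-2)!$), and each is shown, via the bracket Leibniz rule together with $d'_E d'f=0$ and the pointwise identity $\{d''f,d''f\}=-\{d'f,d'f\}$, to equal $d'd''\{d''f,d''f\}\wedge\omega^{n-2}/(n-2)!$; the left side is therefore $2\,d'd''\{d''f,d''f\}\wedge\omega^{n-2}/(n-2)!$, and $8=2\cdot4$ comes from quoting Theorem~\ref{sampsonbochner}. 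Your alternative endgame --- re-expanding into $\{d'_E d''f,d'_E d''f\}$ and $\{d''f,R_E^{(1,1)}(d''f)\}$ and evaluating by Lemma~\ref{stc} (with tracelessness from harmonicity via Lemma~\ref{trace}) and Lemma~\ref{tree'} --- is a legitimate substitute for citing Theorem~\ref{sampsonbochner}, but the sign and factor bookkeeping must actually be carried out rather than anticipated.
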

\begin{proof} 
The key observation is that, since $d' \{ d'_E  d''f, d''f\} \wedge\frac{ \omega^{n-2}}{(n-2)!}$ is an $(n+1,n-1)$-form and $d'' \{d'_E d''f, d'f\}\wedge \frac{ \omega^{n-2}}{(n-2)!}$ is an $(n-1,n+1)$-form, these two forms are both identically equal  to zero.  Thus,
 \begin{eqnarray} 
d' \{  d'_E  d''f, d'f -d''f\} \wedge\frac{ \omega^{n-2}}{(n-2)!}
& = &  d' \{d'_E d''f,  d'f\} \wedge\frac{ \omega^{n-2}}{(n-2)!}
\nonumber \\
& = &-  d' \{d''_E d'f,  d'f \} \wedge\frac{ \omega^{n-2}}{(n-2)!}\ \ \ \mbox{(by (\ref{easycommute'}))}.\nonumber \\
  & = &
-  d' d'' \{ d'f, d'f \} \wedge \frac{ \omega^{n-2}}{(n-2)!}
  \nonumber
  \nonumber \\
  & = & d' d''\{d''f, d''f\} \wedge \frac{ \omega^{n-2}}{(n-2)!}
\label{sb1'}
 \\
 d'' \{d'_Ed''f, d'f - d''f\} \wedge \frac{ \omega^{n-2}}{(n-2)!} & = & - d'' \{d'_Ed''f, d''f\} \wedge \frac{ \omega^{n-2}}{(n-2)!} 
  \nonumber \\  & = &  -  d'' d'\{d''f, d''f\} \wedge \frac{ \omega^{n-2}}{(n-2)!} \nonumber \\
& = & d' d'' \{d''f, d''f\} \wedge \frac{ \omega^{n-2}}{(n-2)!}.
 \label{sb2'}
 \end{eqnarray}
Thus,
\begin{eqnarray*} \label{mochizukitrick}
d \{d''_E  d'f,  d''f -  d'f\} \wedge \frac{ \omega^{n-2}}{(n-2)!}   & = &  d \{d'_E d''f, d'f - d''f\} \wedge \frac{ \omega^{n-2}}{(n-2)!}
 \ \ \ \mbox{(by (\ref{easycommute'}))}
 \\
 & = &  (d'+d'') \{d'_E d''f, d'f - d''f\} \wedge \frac{ \omega^{n-2}}{(n-2)!}
 \\
& = & 2d' d''\{d''f, d''f\} \wedge \frac{ \omega^{n-2}}{(n-2)!}  \ \ \mbox{(by (\ref{sb1'}) and (\ref{sb2'}))}.
 \end{eqnarray*} 
 Thus, the asserted identity follows from
Theorem~\ref{sampsonbochner}. 
\end{proof}

By applying a similar proof as Theorem~\ref{mochizukibochnerformula}, we obtain a variation of the Siu's Bochner formula.
\begin{theorem} \label{mochizukibochnerformula}
For a harmonic map $f: M \rightarrow X$ between K\"{a}hler manifolds, 
\begin{eqnarray*}
  d \{\bar \partial_{E'}  \partial f,  \bar \partial f -  \partial f\} \wedge \frac{ \omega^{n-2}}{(n-2)!} 
 &= &  \left(8\left|{\partial}_{E'} \bar \partial f\right|^2+2Q   \right)  \wedge \frac{\omega^n}{n!}.
  \end{eqnarray*}
\end{theorem}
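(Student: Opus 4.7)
The plan is to mirror the proof of Theorem~\ref{mochizukibochnerformula'} verbatim, with the operators $d', d'', d'_E, d''_E$ replaced by their K\"ahler-to-K\"ahler counterparts $\partial, \bar\partial, \partial_{E'}, \bar\partial_{E'}$, and invoking Siu's Bochner formula (Theorem~\ref{siubochner}) in place of Sampson's. Since $\bar\partial_{E'}\partial f = -\partial_{E'}\bar\partial f$ by~(\ref{easycommute}), the form $\{\bar\partial_{E'}\partial f, \bar\partial f - \partial f\}$ coincides with $\{\partial_{E'}\bar\partial f, \partial f - \bar\partial f\}$, matching exactly the structure of the Sampson--Mochizuki identities (sb1')--(sb2').

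The first step is to record two bidegree cancellations on the $n$-complex-dimensional K\"ahler manifold $M$: the form $\partial\{\bar\partial_{E'}\partial f, \bar\partial f\}\wedge\frac{\omega^{n-2}}{(n-2)!}$ is of type $(n+1, n-1)$, and $\bar\partial\{\bar\partial_{E'}\partial f, \partial f\}\wedge\frac{\omega^{n-2}}{(n-2)!}$ is of type $(n-1, n+1)$, so both vanish identically. Writing $d = \partial + \bar\partial$ and using these vanishings, together with~(\ref{easycommute}) (in particular $\partial_{E'}\partial f = 0 = \bar\partial_{E'}\bar\partial f$) and the Leibniz rule for $\{\cdot,\cdot\}$, I would derive the pair of identities
\[
\partial\{\bar\partial_{E'}\partial f, \bar\partial f - \partial f\}\wedge\frac{\omega^{n-2}}{(n-2)!} = \partial\bar\partial\{\bar\partial f, \bar\partial f\}\wedge\frac{\omega^{n-2}}{(n-2)!}
\]
\[
\bar\partial\{\bar\partial_{E'}\partial f, \bar\partial f - \partial f\}\wedge\frac{\omega^{n-2}}{(n-2)!} = \partial\bar\partial\{\bar\partial f, \bar\partial f\}\wedge\frac{\omega^{n-2}}{(n-2)!},
\]
in perfect analogy with (sb1')--(sb2'). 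Summing yields $d\{\bar\partial_{E'}\partial f, \bar\partial f - \partial f\}\wedge\frac{\omega^{n-2}}{(n-2)!} = 2\,\partial\bar\partial\{\bar\partial f, \bar\partial f\}\wedge\frac{\omega^{n-2}}{(n-2)!}$, and substituting Siu's Bochner formula produces $2(4|\partial_{E'}\bar\partial f|^2 + Q)\frac{\omega^n}{n!}$, the desired right-hand side.

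The main technical hurdle is the intermediate reduction corresponding to the last step of (sb1'), which for Sampson--Mochizuki rested on the identity $\{d'f, d'f\} + \{d''f, d''f\} = 0$ (coming from the symmetry of the complex-linear extension of the target Riemannian metric together with wedge antisymmetry). In the K\"ahler-to-K\"ahler setting the conjugation swaps $E' \leftrightarrow E''$ via $\overline{\partial f} = \bar\partial\bar f$, so the analogous bracket identity is no longer immediate from symmetry alone. To overcome this, I would observe that $\{\partial f, \partial f\} + \{\bar\partial f, \bar\partial f\}$ equals, up to an overall factor, the $(1,1)$-component of the pullback $f^*\omega_N$ of the K\"ahler form on the target. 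Since $f^*\omega_N$ is closed, decomposing $d = \partial + \bar\partial$ across its three bidegrees forces $\partial\bar\partial$ applied to its $(1,1)$-component to vanish. This gives $\partial\bar\partial\{\partial f, \partial f\}\wedge\frac{\omega^{n-2}}{(n-2)!} = -\partial\bar\partial\{\bar\partial f, \bar\partial f\}\wedge\frac{\omega^{n-2}}{(n-2)!}$, which is the K\"ahler-to-K\"ahler substitute for the Sampson--Mochizuki cancellation. Once this is in hand, the remainder of the argument is the mechanical transcription already outlined.
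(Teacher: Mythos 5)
Your argument is correct, and its skeleton --- the two bidegree vanishings, the Leibniz-rule reductions using (\ref{easycommute}), writing $d=\partial+\bar\partial$, and feeding the sum into Siu's formula --- is exactly the paper's. The one genuine divergence is how the $\partial$-piece is evaluated. You correctly flag that the last step of (\ref{sb1'}) does not transcribe naively, since $\{\partial f,\partial f\}+\{\bar\partial f,\bar\partial f\}\neq 0$ for a K\"ahler target; but note that there \emph{is} an immediate algebraic substitute, namely $\{\partial f,\partial f\}=-\{\bar\partial\bar f,\bar\partial\bar f\}$ (conjugation now lands in $E''$), and this is what the paper uses: its $\partial$-piece becomes $\partial\bar\partial\{\bar\partial\bar f,\bar\partial\bar f\}\wedge\frac{\omega^{n-2}}{(n-2)!}$, so the concluding appeal to Theorem~\ref{siubochner} tacitly also uses the $E''$-half of the machinery (the second identities of Lemma~\ref{brac} and Lemma~\ref{aux}, Lemma~\ref{tree}, and (\ref{hessian})) to evaluate that term as $\left(4|\partial_{E'}\bar\partial f|^2+Q\right)\frac{\omega^n}{n!}$. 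You instead convert the $\partial$-piece into $\partial\bar\partial\{\bar\partial f,\bar\partial f\}\wedge\frac{\omega^{n-2}}{(n-2)!}$ via the observation that $\{\partial f,\partial f\}+\{\bar\partial f,\bar\partial f\}$ is, up to a constant, the $(1,1)$-component $\alpha^{1,1}$ of $f^*\omega_N$; since $d(f^*\omega_N)=0$, the $(2,1)$-component of that equation gives $\partial\alpha^{1,1}=-\bar\partial\alpha^{2,0}$, hence $\partial\bar\partial\alpha^{1,1}=0$, which is exactly the cancellation you need --- this checks out, and it is consistent with the paper's route (the two evaluations agree precisely because of (\ref{hessian})). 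The trade-off: the paper's step is purely pointwise algebra, with no use of $d\omega_N=0$ at that moment, but it requires invoking the conjugate $E''$-chain of lemmas; your step uses the K\"ahler condition on the target there, but stays entirely in $E'$ and quotes Theorem~\ref{siubochner} verbatim, once, applied to $2\,\partial\bar\partial\{\bar\partial f,\bar\partial f\}$, giving $\left(8|\partial_{E'}\bar\partial f|^2+2Q\right)\frac{\omega^n}{n!}$ as required. Both routes are sound.
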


\begin{proof}
As in the proof of Theorem~\ref{mochizukibochnerformula'},   $\partial \{ \partial_{E'}  \bar\partial f, \bar \partial f\} \wedge\frac{ \omega^{n-2}}{(n-2)!}
=0=\bar \partial \{\partial_{E'} \bar \partial f, \partial f\}\wedge \frac{ \omega^{n-2}}{(n-2)!}$ and hence
\begin{eqnarray*} 
\partial \{  \partial_{E'}  \bar \partial f, \partial f -\bar  \partial f\} \wedge\frac{ \omega^{n-2}}{(n-2)!}
  & = & \partial \bar \partial\{\bar \partial \bar f, \bar \partial \bar f\} \wedge \frac{ \omega^{n-2}}{(n-2)!}
  \label{sb1},
 \\
 \bar\partial \{\partial_{E'}\bar \partial f, \partial f - \bar \partial f\} \wedge \frac{ \omega^{n-2}}{(n-2)!} 
& = & \partial \bar\partial\{\bar \partial f, \bar \partial f\} \wedge \frac{ \omega^{n-2}}{(n-2)!}.
 \label{sb2}
 \end{eqnarray*}
Consequently,
\begin{eqnarray*} \label{mochizukitrick}
d \{\bar \partial_{E'}  \partial f,  \bar \partial f -  \partial f\} \wedge \frac{ \omega^{n-2}}{(n-2)!} 
  & = &  d \{\partial_{E'} \bar\partial f, \partial f - \bar \partial f\} \wedge \frac{ \omega^{n-2}}{(n-2)!}
 \\
 & = &  (\partial+\bar \partial) \{\partial_{E'} \bar\partial f, \partial f - \bar \partial f\} \wedge \frac{ \omega^{n-2}}{(n-2)!}
 \\
& = &  \left( \partial \bar\partial\{\bar \partial f, \bar \partial f\}+ \partial \bar \partial\{\bar \partial \bar f, \bar \partial \bar f\}  \right)\wedge \frac{ \omega^{n-2}}{(n-2)!}.
 \end{eqnarray*} 
The asserted identity follows from
Theorem~\ref{siubochner}. 
\end{proof}

\subsection{Pluriharmonic maps into Euclidean buildings}

\begin{theorem} \label{thm:GS}
    		Let $M$ be  a compact K\"ahler manifold and $\Delta(G)$ be the Bruhat-Tits building associated to a  semisimple algebraic group $G$ defined over a  non-Archimedean local field $K$.   For any Zariski dense representation of $\rho:\pi_1(X)\to G(K)$, there exists a $\rho$-equivariant, locally Lipschitz  pluriharmonic map $f:\tilde M \rightarrow \Delta(G)$ from the universal cover $\tilde{M}$. 
\end{theorem}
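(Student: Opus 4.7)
The plan is to combine the equivariant existence theorem (Theorem~\ref{existence}) with the Gromov--Schoen regularity of harmonic maps into Euclidean buildings, and then apply Sampson's Bochner formula (Theorem~\ref{sampsonbochner}) on the regular set.

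First, I would establish existence of a locally Lipschitz equivariant harmonic map. To invoke Theorem~\ref{existence} one checks its three hypotheses. The Bruhat--Tits building $\Delta(G)$ is a locally compact NPC space by construction. Zariski density of $\rho$ forbids a fixed point at infinity, since the stabilizer of any $P^* \in \partial \Delta(G)$ is contained in a proper parabolic subgroup of $G(K)$ (the cited lemma). Finally, a finite-energy Lipschitz equivariant map into $\Delta(G)$ exists by the center-of-mass construction of the preceding subsection: choose any $\rho$-equivariant $f_0 : \tilde M \to \Delta(G)$, then mollify via $\tilde f(x) = Q_{f_0,\mu_x}$ where $\mu_x$ is the normalized volume on the unit ball around $x$. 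Theorem~\ref{existence} then yields a locally Lipschitz, $\rho$-equivariant, energy-minimizing map $f : \tilde M \to \Delta(G)$.

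The main obstacle is pluriharmonicity, since the Bochner calculus of Section~\ref{sec:siu} is set up for smooth targets. The strategy is to reduce to the smooth case via the Gromov--Schoen regularity theory: $\tilde M$ decomposes into a regular set $\mathcal R(f)$ and a singular set $\mathcal S(f)$ of Hausdorff codimension at least two, and on $\mathcal R(f)$ the map $f$ is smooth and factors locally through a single apartment, i.e.\ a flat affine Euclidean space. On each such apartment the Riemann tensor vanishes, so the Weitzenböck term $Q_0$ in Sampson's formula is identically zero. Sampson's Bochner formula therefore reads, pointwise on $\mathcal R(f)$,
\[
d'\, d'' \{d''f,\, d''f\} \wedge \frac{\omega^{n-2}}{(n-2)!} \;=\; 4\, |d'_E d''f|^2 \,\frac{\omega^n}{n!}.
\]
Since $f$ is locally Lipschitz by Theorem~\ref{lipschitz}, both sides are locally integrable on $\tilde M$.

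Finally, I would integrate over a fundamental domain $M_0 \subset \tilde M$. The form $\{d''f, d''f\}\wedge \omega^{n-2}/(n-2)!$ is $\pi_1(M)$-invariant (the pairing $\{\cdot,\cdot\}$ is defined by the isometry-invariant Riemannian tensor pulled back along $f$), so $\rho$-equivariance kills boundary terms on $\partial M_0$ in pairs. To handle $\mathcal S(f)$, introduce a family of cut-off functions $\chi_\epsilon$ vanishing on an $\epsilon$-neighborhood of $\mathcal S(f)$ and equal to $1$ outside a $2\epsilon$-neighborhood. Stokes' theorem gives
\[
\int_{M_0} \chi_\epsilon \cdot 4 |d'_E d''f|^2 \,\frac{\omega^n}{n!} \;=\; -\int_{M_0} d\chi_\epsilon \wedge d''\{d''f,d''f\} \wedge \frac{\omega^{n-2}}{(n-2)!}.
\]
The codimension-two bound on $\mathcal S(f)$ together with the Lipschitz estimate shows that the right-hand side tends to $0$ as $\epsilon \to 0$; so $d'_E d'' f = 0$ on $\mathcal R(f)$, which is the pluriharmonic equation. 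The hard step is the codimension-two regularity of $\mathcal S(f)$ and the local approximation by apartment-valued maps; this is precisely the heart of the Gromov--Schoen analysis, and it is also where Zariski density (via a monodromy argument preventing $f$ from landing entirely in a proper subbuilding) is used to rule out degenerate images.
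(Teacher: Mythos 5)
Your overall architecture is exactly the paper's: existence of a locally Lipschitz $\rho$-equivariant energy minimizer from Theorem~\ref{existence} (local compactness of $\Delta(G)$, the parabolic-stabilizer lemma plus Zariski density to exclude a fixed point at infinity, and the center-of-mass mollification to produce a finite-energy Lipschitz equivariant competitor), followed by pluriharmonicity via the Bochner formula of Theorem~\ref{sampsonbochner} applied on the regular set, where the curvature term $Q_0$ vanishes because apartments are flat, and an integration by parts with cut-offs near the codimension-two singular set of Theorem~\ref{regbuildings}. This is the route the paper takes in following the proof of Theorem~\ref{siumainthm}.

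The one genuine gap is your justification that the boundary term vanishes. You assert that the codimension-two bound on $\mathcal S(f)$ together with the Lipschitz estimate forces $\int_{M_0} d\chi_\epsilon \wedge d''\{d''f,d''f\}\wedge \frac{\omega^{n-2}}{(n-2)!} \to 0$. But that integrand involves the Hessian $d'_E d''f$, which the Lipschitz bound of Theorem~\ref{lipschitz} does not control: near $\mathcal S(f)$ the second derivatives need not be bounded (already in the paper's $3$-pod example the map behaves like $\mathrm{Re}\, z^{3/2}$, with Hessian blowing up like $|z|^{-1/2}$ at the singular point), so codimension two alone, which only gives $\int |d\chi_\epsilon| \lesssim \epsilon$, does not close the argument without a quantitative estimate on $|\nabla df|$ near the singular set. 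This is precisely what the paper imports as Theorem~\ref{goto0} (Gromov--Schoen): there are cut-offs $\psi_i$ with $\int_M |\nabla\nabla f|\,|\nabla \psi_i| \to 0$, and it is this estimate, not the bare codimension-two statement, that legitimizes Stokes' theorem across $\mathcal S(f)$. A smaller point: Zariski density enters only in the existence step (no fixed point at infinity); the Gromov--Schoen regularity and apartment-approximation theory does not use it, so your closing remark about ruling out degenerate images via Zariski density is a misattribution rather than a needed ingredient.
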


\begin{definition}	A {\it Euclidean   building} of dimension $n$ is a piecewise Euclidean  simplicial complex $\Delta$ such that:
	\begin{itemize}  
		\item  $\Delta$ is the union of a collection $\mathcal{A}$ of subcomplexes $A$, called apartments, such that the intrinsic metric $d_{A}$ on $A$ makes $\left(A, d_{A}\right)$ isometric to the Euclidean space $\mathbb{R}^{n}$ and induces the given Euclidean metric on each simplex.
		\item  Given two apartments $A$ and $A^{\prime}$ containing both simplices $S$ and $S^{\prime}$, there is a simplicial isometry from $\left(A, d_{A}\right)$ to $\left(A^{\prime}, d_{A^{\prime}}\right)$ which leaves both $S$ and $S^{\prime}$ pointwise fixed.\item $\Delta$ is locally finite.
	\end{itemize} 
\end{definition}

\begin{definition}
A point $x_0$ is said to be a {\it regular point} of a harmonic map $f$, if there exists $r>0$ such that $f(B_r(x_0))$ of $x$ is contained in an apartment of $\Delta$.  A {\it singular point} of $f$ is a point of $\Omega$ that is not a regular point.   
The regular (resp.~singular) set $\mathcal R(f)$ (resp.~$\mathcal S(u)$)  of $f$ is the set of all regular (resp.~singular) points of $f$.
\end{definition}

\begin{example}
Consider a measured foliation defined by the quadratic differential 
$zdz^2$ on $\CC$.  The leaves of the horizontal foliation define a  3-pod $T$ and the  transverse measure gives $T$ a distance function $d$ making $(T,d)$ into a NPC space.  The projection along the vertical foliation $u:\CC \rightarrow T$ is a harmonic map. The leaf containing $0$ is a non-manifold point of $T$.  Let  $K=u^{-1}(0)$. Then $K$ is also a 3-pod.  On the other hand, every point of $K$ besides $0$ has a neighborhood mapping into an isometric copy of $\RR$ and $\mathcal S(0)=\{0\}$.  In particular, the singular set is of Hausdorff codimension 2. Similarly one can construct harmonic maps to other homogeneous trees  by taking quadratic differentials of higher order.
\end{example}

The next two theorems are proved in \cite{gromov-schoen}.

\begin{theorem} \label{regbuildings}
The singular set  ${\mathcal S}(f)$ of a harmonic map $f:M \rightarrow \Delta$ is a closed set of Hausdorff codimension $\geq 2$.
\end{theorem}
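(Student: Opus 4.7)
The plan is to adapt Federer-style dimension reduction to this non-smooth target setting, as in the approach of Gromov--Schoen. The first ingredient is an Almgren-type order function
\[
\mathrm{Ord}(f, x_0) := \lim_{r \to 0^+} \frac{r \int_{B_r(x_0)} |df|^2 \star 1}{\int_{\partial B_r(x_0)} d^2(f, f(x_0)) \, d\sigma},
\]
which I would show is well defined and monotone non-decreasing in $r$. Monotonicity comes from testing the weak harmonicity of $f$ against inner variations of the domain (pullbacks along radial dilations), combined with the NPC quadrilateral comparisons (\ref{menelaus})--(\ref{agamemnon}) in place of the classical integration by parts on the target. In particular $\mathrm{Ord}(f, \cdot)$ is upper semicontinuous and satisfies $\mathrm{Ord}(f, x_0) \geq 1$ everywhere.

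Next I would construct tangent maps. Using the local Lipschitz bound of Theorem~\ref{lipschitz}, the rescalings $f_\lambda(x) := \mu_\lambda^{-1}\, f(x_0 + \lambda x)$, with $\mu_\lambda$ chosen to match the $L^2$-growth of $d(f, f(x_0))$ on $B_\lambda(x_0)$, subconverge as $\lambda \to 0$ to a harmonic map $f_* \colon T_{x_0} M \to T_{f(x_0)} \Delta$ into the tangent cone, which is itself a Euclidean building by the structure theory of affine buildings. The monotonicity formula forces $f_*$ to be homogeneous of degree $\alpha = \mathrm{Ord}(f, x_0)$. I would then verify that $x_0 \in \mathcal{R}(f)$ precisely when $\alpha = 1$ and $f_*$ takes values in a single apartment: for the nontrivial direction one uses that a $1$-homogeneous harmonic map into an apartment $\RR^n$ is linear, and then propagates this picture to a neighborhood of $x_0$ by an openness argument.

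With these tools in place I would carry out the dimension reduction. Suppose for contradiction that the Hausdorff dimension of $\mathcal{S}(f)$ exceeds $n-2$. A Federer-style density point argument extracts $x_0 \in \mathcal{S}(f)$ and, after iterated blow-ups, a homogeneous tangent map $f_*$ whose singular set is translation invariant along an affine subspace $V \subset T_{x_0} M$ of dimension $>n-2$. Quotienting by $V$ reduces the analysis to a homogeneous harmonic map of one or two real variables into a Euclidean building whose singular set has codimension $<2$: in dimension one the map is a geodesic, hence regular; in dimension two one analyzes the link of $0$, a $1$-homogeneous map on a circle, which again has geodesic image and therefore no singularities. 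Either way $0 \notin \mathcal{S}(f_*)$, a contradiction.

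The principal technical obstacle is the monotonicity of the order function and the compactness needed to extract tangent maps, since the target is only NPC: the classical domain-variation identity must be replaced by careful use of (\ref{menelaus})--(\ref{agamemnon}), and one must verify that iterated tangent cones of $\Delta$ retain enough building structure for the blow-up iteration to close. Once these foundations are secured, the Federer-style reduction itself is essentially formal.
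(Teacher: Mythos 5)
The paper itself does not prove Theorem~\ref{regbuildings}; it cites Gromov--Schoen, and your outline is a reconstruction of exactly that strategy (order function, monotonicity via domain variations, tangent maps into tangent cones of the building, characterization of regular points through degree-one homogeneous tangent maps into flats, Federer dimension reduction). So the architecture is the right one. However, two points in your plan are more than bookkeeping. First, the step you compress into ``propagates this picture to a neighborhood of $x_0$ by an openness argument'' is the actual heart of the Gromov--Schoen proof: knowing that the tangent map at $x_0$ is $1$-homogeneous with image in a single flat does not formally yield that $f$ maps a neighborhood of $x_0$ into one apartment. One needs the classification of degree-one homogeneous minimizing maps into buildings (they are linear maps into a flat), quantitative almost-monotonicity of the order to show the order is close to $1$ on a neighborhood, and then the ``effective containment'' argument that the image stays in an apartment at all scales; none of this follows from soft upper semicontinuity of the order. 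As written, your equivalence ``$x_0\in\mathcal R(f)$ iff $\alpha=1$ and $f_*$ maps into an apartment'' is an assertion of the theorem's hardest ingredient, not a step of the proof.

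Second, your dimension-reduction endgame is incorrect as stated. An isolated singular point of a homogeneous map in two real variables has codimension exactly $2$, which contradicts nothing --- indeed the paper's own example of the projection $u:\CC\rightarrow T$ onto the $3$-pod defined by $z\,dz^2$ is a harmonic map on a two-dimensional domain whose singular set is precisely $\{0\}$, so the claim that in dimension two ``the link has geodesic image and therefore no singularities'' is false in general. The correct reduction runs: if $\dim_{\mathcal H}\mathcal S(f)>n-2$, iterated blow-ups split off translation-invariant directions until one obtains a homogeneous harmonic map of $k\leq 1$ variables that is singular at the origin; in one variable the map is a geodesic, which lies in an apartment, hence is regular --- that is the contradiction. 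Your version stops the reduction one dimension too early, and the two-variable case you invoke cannot be disposed of by a link argument without the degree-one classification mentioned above. With these two repairs (carrying out the flat-containment/regularity propagation, and running the reduction down to one variable) your outline matches the cited proof; without them it is a plan that omits the two steps where the actual work lies.
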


\begin{theorem}\label{goto0}
Let   $f:M \rightarrow \Delta$ be as in Theorem~\ref{regbuildings}.  There exists a sequence of smooth functions $\psi_i$  with $\psi_i \equiv 0$ in a neighborhood of ${\mathcal S}(u)$, $0 \leq \psi_i \leq 1$ and $\psi_i(x) \rightarrow 1$ for all $x \in {\mathcal S}(u)$ such that
\[
\lim_{i \rightarrow \infty} \int_M |\nabla \nabla u| |\nabla \psi_i| \ d\mu =0.
\]
\end{theorem}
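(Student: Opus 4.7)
The plan is to take $\psi_i$ to be a smooth cutoff built from a regularized distance function to $\mathcal{S}(u)$, vanishing on a shrinking tubular neighborhood of $\mathcal{S}(u)$ and equal to $1$ outside a slightly larger one. The whole argument hinges on the codimension $\geq 2$ bound from Theorem~\ref{regbuildings}, which forces the volume of an $\epsilon$-neighborhood of $\mathcal{S}(u)$ to decay like $\epsilon^2$. (The stated conclusion ``$\psi_i(x) \to 1$ for all $x \in \mathcal{S}(u)$'' is evidently a typo for $x \notin \mathcal{S}(u)$, since $\psi_i$ is required to vanish near $\mathcal{S}(u)$.)

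Concretely, fix $\epsilon_i \to 0$ and set $N_\epsilon := \{x \in M : \dist(x,\mathcal{S}(u)) < \epsilon\}$. Choose a smooth monotone $\phi_\epsilon : [0,\infty) \to [0,1]$ with $\phi_\epsilon \equiv 0$ on $[0,\epsilon]$, $\phi_\epsilon \equiv 1$ on $[2\epsilon,\infty)$, and $|\phi'_\epsilon| \leq C/\epsilon$. After smoothing the Lipschitz function $\dist(\cdot,\mathcal{S}(u))$ slightly (by e.g. mollification away from the set itself), set
\[
\psi_i(x) := \phi_{\epsilon_i}(\dist(x, \mathcal{S}(u))).
\]
Then $\psi_i$ is smooth, identically zero in a neighborhood of $\mathcal{S}(u)$, satisfies $0 \leq \psi_i \leq 1$, and $\psi_i(x) \to 1$ for every $x \notin \mathcal{S}(u)$. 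Moreover $|\nabla \psi_i|$ is supported in the shell $N_{2\epsilon_i} \setminus N_{\epsilon_i}$ and satisfies $|\nabla \psi_i| \leq C/\epsilon_i$ there.

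The key estimate is then Cauchy-Schwarz:
\[
\int_M |\nabla \nabla u|\,|\nabla \psi_i|\, d\mu \;\leq\; \frac{C}{\epsilon_i}\,\vol(N_{2\epsilon_i})^{1/2} \left( \int_{N_{2\epsilon_i}} |\nabla \nabla u|^2 \, d\mu \right)^{1/2}.
\]
Covering $\mathcal{S}(u) \cap K$ (for any compact $K \subset M$) by $O(\epsilon_i^{-(n-2)})$ balls of radius $\epsilon_i$ via Theorem~\ref{regbuildings} and summing the volumes of their doubles yields $\vol(N_{2\epsilon_i}) \leq C' \epsilon_i^2$. Hence the prefactor $\epsilon_i^{-1}\vol(N_{2\epsilon_i})^{1/2}$ is uniformly bounded, and the problem reduces to showing
\[
\int_{N_{2\epsilon_i}} |\nabla \nabla u|^2 \, d\mu \;\longrightarrow\; 0 \quad \text{as } i \to \infty.
\]

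The hard part will be establishing that $|\nabla \nabla u|^2$ is locally integrable on all of $M$, since a priori $\nabla \nabla u$ is only smooth on the regular set $\mathcal{R}(u)$ and may blow up near $\mathcal{S}(u)$. This $W^{2,2}_{\loc}$-type estimate for harmonic maps to Euclidean buildings is the central analytic input from \cite{gromov-schoen}: it is proved by combining monotonicity of the frequency function with a scaling analysis of tangent maps at singular points (which are necessarily homogeneous harmonic maps), yielding quantitative control of the Hessian in tubes around $\mathcal{S}(u)$. Once global integrability is in hand, $\vol(\mathcal{S}(u))=0$ gives $\vol(N_{2\epsilon_i}) \to 0$, so $\int_{N_{2\epsilon_i}} |\nabla\nabla u|^2\,d\mu \to 0$ by dominated convergence, completing the proof.
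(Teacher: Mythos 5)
The paper does not actually prove this statement---it is quoted from \cite{gromov-schoen} with no argument given---so there is no internal proof to compare against; I can only assess your sketch on its own terms. Your overall architecture (cutoff supported on a shrinking tubular neighborhood of $\mathcal S(u)$, Cauchy--Schwarz against an $L^2$ bound on the Hessian, and the observation that the stated condition ``$\psi_i(x)\to 1$ for $x\in\mathcal S(u)$'' must mean $x\notin\mathcal S(u)$) is the right shape, and is essentially how such cutoff lemmas are used in \cite{gromov-schoen}.

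There is, however, a genuine gap at the step where you pass from Theorem~\ref{regbuildings} to the volume bound $\vol(N_{2\epsilon_i})\leq C'\epsilon_i^2$. Hausdorff codimension $\geq 2$ is a statement about Hausdorff dimension, and it does \emph{not} imply that $\mathcal S(u)\cap K$ can be covered by $O(\epsilon^{-(n-2)})$ balls of radius $\epsilon$; that is a bound on upper Minkowski content, which is strictly stronger. (A countable dense subset of $M$ has Hausdorff dimension $0$ but its $\epsilon$-neighborhood is all of $M$.) So with equal-radius tubular cutoffs your prefactor $\epsilon_i^{-1}\vol(N_{2\epsilon_i})^{1/2}$ need not be bounded. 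The standard repair is to build $\psi_i$ from a covering of $\mathcal S(u)\cap K$ by balls $B_{r_j}(x_j)$ of \emph{varying} radii with $\sum_j r_j^{n-2}$ small (which is what the Hausdorff dimension/measure bound actually provides), take a cutoff adapted to each ball, and combine them; this yields $\int_M|\nabla\psi_i|^2\,d\mu\to 0$ (the zero $2$-capacity of sets of codimension $\geq 2$), after which Cauchy--Schwarz against $\bigl(\int_K|\nabla\nabla u|^2\,d\mu\bigr)^{1/2}$ closes the argument without any Minkowski content. Even this requires local finiteness of $\mathcal H^{n-2}(\mathcal S(u))$ in the borderline case of codimension exactly $2$, which is part of what the dimension-reduction analysis of \cite{gromov-schoen} supplies. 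A second, smaller concern: the local $W^{2,2}$ bound on $u$ across $\mathcal S(u)$, which you defer to \cite{gromov-schoen}, is in that paper obtained by running the Bochner inequality against cutoffs of exactly this kind (using only boundedness of $\int|\nabla\psi_i|^2$ and the Lipschitz bound on $u$), so one must order the two steps carefully to avoid circularity: first the uniform $W^{2,2}$ estimate from cutoffs with bounded Dirichlet energy, then the vanishing statement from cutoffs with Dirichlet energy tending to zero.
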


%

By Theorem~\ref{regbuildings}, Siu's or Sampson's Bochner formula holds at a.e.~$x \in \tilde M$.   We now follow the proof of Theorem~\ref{siumainthm} where  integration by parts can be  justified using Theorem~\ref{regbuildings} and Theorem~\ref{goto0}.

\newpage 
\section{Donaldson Corlette theorem}
\label{sec:Donaldson}

\subsection{Introduction:  Higgs bundles via harmonic maps}
In this lecture, we prove the theorem of Donaldson and Corlette relating  harmonic maps to symmetric spaces of non-compact type and flat connections. We do it explicitly for $\mathsf{SL}(n,\CC)$. This correspondence is very well known and there are many excellent references to consult.  Given the interest of the audience in this subject, we decided to give all the details of the proof explicitly.
 See also \cite{donaldson}, \cite{corlette2} and the expositional paper \cite{li}.  
 
 \subsection{The flat vector bundle associated to a representation}
Let $\rho:\pi_1(M) \rightarrow G=\mathsf{SL}(n,\CC)$ be a homomorphism and 
$$
E=\tilde M \times_\rho \CC^n \rightarrow M
$$ be the associated flat vector bundle.
 Let  
 $\mathcal H $
  denote the space of positive definite self-adjoint matrices of determinant one.   For $g \in \mathsf{SL}(n,\CC)$, define an action $\mathcal A_g$ on the space of $(n\times n)$-matrices $M_{n\times n}(\CC)$ by
 \begin{equation} \label{Ag}
\mathcal A_g(h)=g^{-1*}hg^{-1}.
 \end{equation}
Note that  $\mathcal H$ is invariant  under $\mathcal A_g$ and hence it defines an action on $\mathcal H$.

   A  $\rho$-equivariant map $h:\tilde M \rightarrow \mathcal H$ defines a hermitian metric  on $E$ by first defining 
\begin{equation}\label{hermnmet}
H(s,t)={\bar s}^tht
\end{equation}
on the universal cover  $\tilde M \times \CC^n$ and 
 descending to a metric on $E$ by equivariance. 

Given the flat vector bundle $(E,d)$ defined by $\rho$ and the Hermitian metric $H$ defined by a $\rho$-equivariant map, we define $\theta \in \Omega^1(M, \mathsf{End}(E))$ by the formula
\begin{equation}\label{higgs}
H(\theta s,t)=1/2 \left(H(ds,t)+H(s,dt)-dH(s,t)\right)
\end{equation}
and $D$ by the formula
\begin{equation}\label{unicon}
d=D+\theta.
\end{equation}
Formulas~(\ref{higgs}) and~(\ref{unicon}) immediately imply that
\begin{eqnarray}\label{sfadj}
H(\theta s,t)=H( s, \theta t)
\end{eqnarray} 
and 
\begin{eqnarray}\label{compa}
H(Ds,t)+H(s,Dt)&=&
H(ds,t)-H(\theta s,t)+H(s,dt)-H( s, \theta t) \nonumber\\
&=&dH(s,t).
\end{eqnarray}
In other words, $D$ is a Hermitian connection on $(E,H)$. 

We claim
\begin{eqnarray}\label{compa}
\theta=-\frac{1}{2}h^{-1}dh.
\end{eqnarray}
To see (\ref{compa}), compute
\begin{eqnarray*}\label{Fgtysm}
dH(s,t)&=&{ d\bar s}^th \, t+{\bar s}^tdh \, t+{\bar s}^th \, dt\\
&=&H( d s, t)+{\bar s}^tdh \, t+H( s, dt)\\
&=&H( D s, t)+H(\theta s,t)+{\bar s}^tdh \, t +H(s, D t)+H(s,\theta t)\\
&=&dH(s,t)+ H( \theta s, t)+ H(s, h^{-1}dh \,t) + H(s, \theta t).
\end{eqnarray*}
Thus,
\begin{eqnarray*}
H( \theta s, t)=H( s, \theta t)=-1/2H(s, h^{-1}dh \, t)
\end{eqnarray*}
and (\ref{compa}) follows.

Let $\mathsf{End}_0(E)$ denote the space of trace-less endomorphisms of $E$.   We claim that $D$ is a 
$\mathsf{SL}(n,\CC)$-connection and 
$\theta \in \mathsf{End}_0(E).$ By (\ref{unicon}) and since $d$ is traceless, it suffices to show that $\theta$ is traceless. Indeed, since $G/K$ is a Cartan-Hadamard space, we can write $h=e^u$ over a simply connected region $U$ in $M$ (or passing to the universal cover) where $u(x) \in \mathfrak p$ for all $x \in U$. Thus,
\[
\theta=h^{-1}dh=du
\]
is traceless since $u$ is traceless.

As connections on $\mathsf{End}_0(E)$,
\begin{equation} \label{DonEnd}
D=d+\frac{1}{2}\left[h^{-1}dh, \  \cdot \ \ \right].
\end{equation}
We  apply harmonic map theory to prove:  


\begin{theorem} \label{thm:donaldson}
Given  an irreducible representation  $\rho:\pi_1(M) \rightarrow \mathsf{SL}(n,\CC)$, there exists a $\rho$-equivariant map $h:\tilde M \rightarrow \mathcal H$ such that for the  Hermitian metric $H$, Hermitian connection $D$ on $\mathsf{End}_0(E)$ and $\theta \in \Omega^1(M, \mathsf{End}_0(E))$ defined by (\ref{hermnmet}), (\ref{higgs}) and (\ref{unicon}) respectively, 
\begin{equation}\label{harm*}
d_D^{\star} \theta=0.
\end{equation}
\end{theorem}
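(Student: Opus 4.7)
\emph{Step 1 (existence).} The strategy is to apply the equivariant existence theorem (Theorem~\ref{existence}) with target $\mathcal H=\mathsf{SL}(n,\CC)/\mathsf{SU}(n)$ and then translate the harmonic map equation into (\ref{harm*}). Endowed with its $\mathsf{SL}(n,\CC)$-invariant metric of non-positive sectional curvature, $\mathcal H$ is a locally compact NPC space. Since $M$ is compact, the center-of-mass construction of Lemma~\ref{com} produces a finite-energy locally Lipschitz $\rho$-equivariant map $\tilde M\to\mathcal H$. Moreover, $\rho(\pi_1(M))$ cannot fix a point of $\partial\mathcal H$: the stabilizer of such a point in $\mathsf{SL}(n,\CC)$ is contained in a proper parabolic subgroup (Section~2.8), which stabilizes a proper flag in $\CC^n$, contradicting the assumed irreducibility of $\rho$. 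Theorem~\ref{existence} therefore yields a locally Lipschitz $\rho$-equivariant harmonic map $h:\tilde M\to\mathcal H$, and the regularity bootstrap of Section~2.4 upgrades $h$ to a smooth map, since $\mathcal H$ is a smooth Riemannian manifold of non-positive curvature.

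\emph{Step 2 (matrix form of the harmonic map equation).} Set $\alpha:=h^{-1}dh$, so that $\theta=-\tfrac{1}{2}\alpha$ by (\ref{compa}). Every smooth $\rho$-equivariant variation of $h$ has the infinitesimal form $\dot h=hX$, where $X:\tilde M\to\mathfrak p$ is an $\mathrm{Ad}\circ\rho$-equivariant map into the traceless Hermitian matrices; differentiating yields $\dot\alpha_i=\partial_iX+[\alpha_i,X]$. With the invariant metric $g_h(A,B)=\mathrm{tr}(h^{-1}Ah^{-1}B)$ on $\mathcal H$, the energy density is $\tfrac{1}{2}g^{ij}\,\mathrm{tr}(\alpha_i\alpha_j)$, and the commutator contribution to $\delta E(h)$ is
\begin{equation*}
g^{ij}\,\mathrm{tr}\bigl([\alpha_i,X]\alpha_j\bigr)=g^{ij}\,\mathrm{tr}\bigl(X[\alpha_j,\alpha_i]\bigr)=0
\end{equation*}
by symmetry of $g^{ij}$ paired against antisymmetry of the bracket. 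Integration by parts over a fundamental domain, legitimate by $\rho$-equivariance, then leaves $\delta E(h)=-\int_M \mathrm{tr}(X\cdot d^{\star}\alpha)\star 1$, with $d^{\star}$ the ordinary codifferential on $\mathfrak{sl}(n,\CC)$-valued one-forms. Since $\mathrm{tr}(X\,\cdot\,)$ is non-degenerate on $\mathfrak p$ and $X$ is arbitrary, harmonicity of $h$ is equivalent to $d^{\star}\alpha=0$, i.e.\ $d^{\star}\theta=0$.

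\emph{Step 3 (passage to $d_D^{\star}$).} By (\ref{DonEnd}), the induced connection on $\mathsf{End}_0(E)$ is $D=d-[\theta,\,\cdot\,]$, so for any $\eta\in\Omega^1(M,\mathsf{End}_0(E))$,
\begin{equation*}
d_D^{\star}\eta=-\star d_D\star\eta=d^{\star}\eta+\star[\theta\wedge\star\eta].
\end{equation*}
In an orthonormal frame $\theta=\theta_\gamma\,dx^\gamma$, the correction term $\star[\theta\wedge\star\theta]=\sum_\gamma[\theta_\gamma,\theta_\gamma]=0$ vanishes, so $d_D^{\star}\theta=d^{\star}\theta=0$, which is (\ref{harm*}).

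\emph{Main obstacle.} The technical heart of the argument is the variational calculation of Step~2: one must correctly parametrise $\rho$-equivariant deformations by $\mathrm{Ad}\circ\rho$-equivariant $\mathfrak p$-valued test functions, verify the cancellation of the commutator term in $\delta E$, and identify the formal adjoint purely in matrix terms. Once this is in hand, Step~1 reduces to the parabolic/flag stabilizer argument extracting ``no fixed point at infinity'' from irreducibility, and Step~3 is a one-line commutator identity.
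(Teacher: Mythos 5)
Your proposal is correct in substance but reaches (\ref{harm*}) by a genuinely different route than the paper. Your Step 1 is the paper's Step 1 (Theorem~\ref{existence} together with the parabolic--stabilizer lemma; you in fact spell out the point, only implicit in the paper, that an irreducible $\rho$ preserves no flag in $\CC^n$ and hence fixes no point of $\partial\mathcal H$). The divergence is in the translation step: the paper (Sections~\ref{hlc} and~\ref{completion31}) proves via the isomorphism $P^{\CC}:T\mathcal H^{\CC}\simeq \mathcal H\times\mathfrak{sl}(n)$ that $D$ is precisely the connection induced by the Levi-Civita connection $\nabla$ of $(\mathcal H,g_{\mathcal H})$ (formulas (\ref{difference})--(\ref{DonEnd})), and then transports the tension-field equation $d_\nabla^{\star}dh=0$ through $P^{\CC}$ using ${P^{\CC}}^{-1}\theta=-\tfrac12\,dh$; you instead derive the ``harmonic metric'' form of the Euler--Lagrange equation, $d^{\star}(h^{-1}dh)=0$ with the plain flat codifferential, by a first variation in matrix coordinates, and then observe that $d_D^{\star}\theta=d^{\star}\theta$ because the correction term $[\theta\wedge\star\theta]=\sum_\gamma[\theta_\gamma,\theta_\gamma]\star 1$ vanishes identically. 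Your route is more elementary and bypasses the connection comparison entirely; the paper's route buys the conceptual fact that $D$ is the Levi-Civita connection in disguise, which is what the identification $\hat\nabla=D$ is for. Two points in your Step 2 should be tightened: a tangent vector to $\mathcal H$ at $h$ is a traceless \emph{self-adjoint} matrix $\dot h$, so in $\dot h=hX$ the field $X=h^{-1}\dot h$ is self-adjoint with respect to $h$ (the paper's $P_h(T_h\mathcal H)$), not an element of $\mathfrak p$; and to pass from $\int_M\operatorname{tr}\bigl(X\,d^{\star}\alpha\bigr)\star 1=0$ to $d^{\star}\alpha=0$ you should note that $d^{\star}\alpha=-h^{-1}\tau(h)$ is itself pointwise traceless and $h$-self-adjoint, so the trace pairing is positive definite on the relevant subspace -- this identity is also the quickest confirmation that $d^{\star}(h^{-1}dh)=0$ really is the harmonic map equation for the metric $g_{\mathcal H}$.
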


The proof of Theorem~\ref{thm:donaldson} is given several steps:  (1) Choose $h$ to be a harmonic map (cf.~Section~\ref{sec:ehm}). (2) Show that the Hermitian connection $D$ is related to the Levi-Civita connection on $\mathcal H$  (cf.~Section~\ref{hlc}). (3)
Show that the harmonic map equation for $h$ is equivalent to (\ref{harm*})  (cf.~Section~\ref{completion31}).

\subsection{The equivariant map $h$ is harmonic}
\label{sec:ehm}

The first step in the proof of Theorem~\ref{thm:donaldson} is to choose the map $h$ of Theorem~\ref{thm:donaldson}  as a harmonic map into $(\mathcal H, g_\mathcal H)$ where the metric is given by
\[
g_{\mathcal H}(X, Y)=\frac{n}{2}\mathsf{trace}(h^{-1}X h^{-1}Y) \ \mbox{for} \ X, Y \in T_h\mathcal H.
\]
\begin{definition}
We call $h$ or $H$ a \emph{harmonic metric}.
\end{definition}
For $G=\mathsf{SL}(n,\CC)$, $K=\mathsf{SU}(n)$, let $\mathfrak{sl}(n) =\mathfrak k \oplus \mathfrak p$ be the Cartan decomposition
and
\[
B(X,Y)=2n\, \mathsf{trace}(XY)
\]
be the Killing form on $\mathfrak{sl}(n)$. The inner product is positive definite on $\mathfrak p$.

Let $L_g:G/K \rightarrow G/K$ be  left multiplication and define a metric $g_{G/K}$ on $G/K$ by  metrically identifying
\begin{equation} \label{metricallyid}
 dL_{g^{-1}}: T_{gK}G/K \rightarrow T_{eK}G/K=\mathfrak p.
\end{equation}
This defines $(G/K,g_{G/K})$ as a symmetric space of non-compact type. 

\begin{lemma}\label{2models}
The map 
\begin{eqnarray*}
\Psi: G/K & \mapsto &  \mathcal H
\\
gK & \mapsto & g^{-1*}g^{-1}=h 
\end{eqnarray*}
identifies $(G/K,g_{G/K})$ isometrically with $(\mathcal H, g_{\mathcal H})$ as $G$-spaces.  
\end{lemma}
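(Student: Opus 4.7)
The plan is to verify in turn that $\Psi$ is well-defined, bijective, $G$-equivariant, and an isometry at the basepoint; the latter claim then propagates everywhere by equivariance. I expect the isometry check to be the main computational step, but it reduces to a single identification at the identity.

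First I would check \emph{well-definedness}. If $g' = gk$ with $k \in \mathsf{SU}(n)$, then $(g')^{-1*}(g')^{-1} = g^{-1*}(k^{-1})^* k^{-1} g^{-1}$, and $k \in \mathsf{SU}(n)$ gives $(k^{-1})^* k^{-1} = (k^*)^{-1}k^{-1} = kk^{-1}=I$. For \emph{bijectivity}, surjectivity follows from the fact that every $h \in \mathcal{H}$ admits a factorization $h = g^{-1*}g^{-1}$ (take $g$ to be the inverse of a positive square root of $h$, rescaled to lie in $\mathsf{SL}(n,\CC)$), and injectivity follows because $g^{-1*}g^{-1} = (g')^{-1*}(g')^{-1}$ forces $(g'g^{-1})(g'g^{-1})^* = I$, hence $g'g^{-1} \in \mathsf{SU}(n)$. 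For \emph{$G$-equivariance}, using $(g_0 g)^{-1} = g^{-1}g_0^{-1}$, I compute
\[
\Psi(g_0 gK) = (g_0 g)^{-1*}(g_0 g)^{-1} = g_0^{-1*}\, g^{-1*} g^{-1}\, g_0^{-1} = \mathcal{A}_{g_0}\bigl(\Psi(gK)\bigr),
\]
using the definition of $\mathcal{A}_{g_0}$ in (\ref{Ag}).

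For the \emph{isometry} claim, by $G$-equivariance and transitivity it suffices to check the differential at $eK \in G/K$, which maps to $I \in \mathcal{H}$. A tangent vector at $eK$ is represented by $X \in \mathfrak{p}$, and the curve $t \mapsto \exp(tX)K$ has image $t \mapsto \exp(-tX^*)\exp(-tX)$ under $\Psi$. Since $X \in \mathfrak{p}$ satisfies $X^* = X$, this simplifies to $\exp(-2tX)$, so that
\[
d\Psi_{eK}(X) = -2X \in T_I\mathcal{H}.
\]
Then by the definitions of the two metrics,
\[
g_{\mathcal{H}}\bigl(d\Psi(X), d\Psi(Y)\bigr) = \tfrac{n}{2}\mathsf{trace}\bigl((-2X)(-2Y)\bigr) = 2n\,\mathsf{trace}(XY) = B(X,Y) = g_{G/K}(X,Y).
\]
This verifies the isometry at the basepoint. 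For a general point $gK$, the maps $L_g$ on $G/K$ and $\mathcal{A}_g$ on $\mathcal{H}$ are isometries of their respective spaces (the former by construction of $g_{G/K}$ via left-translation, the latter because $\mathcal{A}_g$ preserves $g_{\mathcal{H}}$, as can be checked from its explicit formula), so the equivariance $\Psi \circ L_g = \mathcal{A}_g \circ \Psi$ transports the isometry at $eK$ to isometry at $gK$. This completes the proof.
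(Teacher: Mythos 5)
Your proof is correct. It rests on the same two pillars as the paper's proof---the equivariance identity $\Psi \circ L_g = \mathcal A_g \circ \Psi$ and the basepoint computation $d\Psi_{eK}(X) = -2X$ for $X \in \mathfrak p$, which is exactly the paper's (\ref{helpder})---but you package the passage to general points differently: you verify the metric identity only at $eK$ and transport it by homogeneity, using that $L_g$ is an isometry of $g_{G/K}$ by construction and that $\mathcal A_g$ preserves $g_{\mathcal H}$. The paper instead computes the composite differential $d\bigl((\Psi\circ L_g)^{-1}\bigr)_h(X) = -\tfrac12 \Ad_{g^{-1}}(h^{-1}X)$ at an arbitrary $h \in \mathcal H$ and concludes from the $\Ad$-invariance of the Killing form; the invariance of $g_{\mathcal H}$ under $\mathcal A_g$ that you invoke is precisely the role $\Ad$-invariance plays there, so the two arguments carry the same content, phrased as ``check at one point and move it around'' versus ``check at every point at once.'' Your version is the more standard homogeneous-space formulation and has the small bonus of recording well-definedness and bijectivity of $\Psi$, which the paper leaves implicit. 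The one step you assert rather than compute---that $\mathcal A_g$ is an isometry of $(\mathcal H, g_{\mathcal H})$---is a one-line verification ($d(\mathcal A_g)_h(X) = g^{-1*}Xg^{-1}$ together with $(\mathcal A_g(h))^{-1} = g h^{-1} g^{*}$ and cyclic invariance of the trace), so there is no gap, but you should write it out for completeness.
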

\begin{proof}
First, $\Psi$ is equivariant with respect to the action $L_g$ on $G/K$ and the action $\mathcal A_g$ on $\mathcal H$.  Indeed,
\begin{align*}
\Psi \circ L_g (g_1K) & = \Psi(gg_1K) 
= (gg_1)^{-1*}(gg_1)^{-1} 
 = g^{-1*}(g_1^{-1*}g_1^{-1})g^{-1} 
 \\
 & = g^{-1*}\Psi(g_1K)g^{-1}=\mathcal A_g \circ \Psi (g_1K).
\end{align*}
Second,  $\Psi$ is an isometry. Since the metric $g_{G/K}$ is defined by (\ref{metricallyid}), 
we need to show that with $h=g^{-1*}g^{-1} \in \mfld$, 
$$
d(L_{g^{-1}})_{gK} \circ d(\Psi^{-1})_h  = d\big( (\Psi \circ L_g)^{-1} \big)_h: T_h \mathcal H \rightarrow T_{eK}G/K=\mathfrak p
$$
is an isometry.  This is a straightforward calculation:  
Let $t \mapsto g_t$ be a path in $G/K$ with $g_0=eK$ and $\dot g_0 \in T_{eK}G/K$ (where dot indicates the $t$-derivative). For $\dot g \in T_{eK} G/K$, since  $\dot g_0$ is self-adjoint,
\begin{eqnarray}\label{helpder}
(d\Psi)_e (\dot g_0)=\frac{d}{dt}\Big|_{t=0} (g_t^{-1*}g_t^{-1})=-\dot g_0^*-\dot g_0=-2\dot g_0.
\end{eqnarray}
For $X \in T_h\mathcal H$,
\begin{eqnarray*}
d\big( (\Psi \circ L_g)^{-1} \big)_h (X) & = & d \big( (\mathcal{A}_g \circ \Psi)^{-1} \big)_h (X)
 =  d(\Psi^{-1} \circ \mathcal{A}_{g^{-1}})_h(X)\\
& = & d(\Psi^{-1} \circ \mathcal{A}_{g^{-1}})_{g^{-1*}g^{-1}}(X)
 =  (d\Psi_e)^{-1} \circ (d\mathcal{A}_{g^{-1}})_{g^{-1*}g^{-1}}(X)\\
& = & (d\Psi^{-1})_e (g^*Xg)=-\frac{1}{2}g^*Xg
\\
& = & -\frac{1}{2}Ad_{g^{-1}}(gg^*X)=-\frac{1}{2}Ad_{g^{-1}}(h^{-1}X).
\end{eqnarray*}
Here we used (\ref{helpder}) in the third to last equality.
Using this formula and the $\mathsf{Ad}$-invariance of the Killing form, we have for   $X, Y \in T_h\mfld$ 
\begin{align*}
B\left( d\big( (\Psi \circ L_g)^{-1} \big)_h (X), d\big( (\Psi \circ L_g)^{-1} \big)_h (Y) \right)& = \frac{1}{4} B(h^{-1}X, h^{-1}Y)
\\
& =  \frac{n}{2} \mathsf{trace}(h^{-1}Xh^{-1}Y)
\\
& =  g_{\mfld}(X,Y). \qedhere
\end{align*}
\end{proof}

By Theorem~\ref{existence}, there  exists a $\rho$-equivariant  harmonic map $f:  \tilde M \rightarrow G/K$.  In view of the Lemma~\ref{2models}, we identify $G/K$ with $\mathcal H$ and obtain   
 a $\rho$-equivariant  harmonic map  
 \[
 h=f^{-1*}f^{-1}: \tilde M \rightarrow \mathcal H, \ \ \ 
d_\nabla^{\star} dh =0
\]
where $\nabla$ is the pullback to $h^*T\mathcal H$ of the  Levi-Civita connection of $(\mathcal H, g_{\mathcal H})$.

\subsection{The hermitian connection $D$ and  the Levi-Civita connection on $(\mathcal H,g_\mathcal H)$}
\label{hlc}

Recall that $\mathcal H \subset \mathsf{SL}(n,\CC)$ is the space of positive definite, self-adjoint matrices of determinant one and consider the map
\[
P_h: T_h \mathcal H \rightarrow P_h(T_h \mathcal H)  \subset \mathfrak{sl}(n),  \ \ \ X \mapsto h^{-1}X
\]
whose  image $P_h(T_h \mathcal H)$  consists of matrices self-adjoint with respect to $h$.
Indeed,
\begin{eqnarray*}
(h^{-1}X)^{*_h}=h^{-1}(h^{-1}X)^*h=h^{-1}X.
\end{eqnarray*}
Extending this map complex linearly  induces an isomorphism 
\[
P_h^{\CC}: T_h \mathcal H^{\CC} \xrightarrow{\simeq}\mathfrak{sl}(n)
\]
 which defines a global isomorphism
\begin{equation} \label{pc}
P^{\CC}: T \mathcal H^{\CC} \xrightarrow{\simeq} \mathcal H \times \mathfrak{sl}(n).
\end{equation}
%
%

The trivial connection $d$ on $\mathcal H \times \mathfrak{sl}(n)$ pulls back by the isomorphism $P^{\CC}$ to a flat connection $\bar \nabla$ on $T\mathcal H^{\mathbb C}$; i.e.
\[
\bar \nabla_XY \Big|_h={P^{\CC}}^{-1}\circ d_X\circ P^{\CC} (Y)\Big|_h.
\]
We next  compute the formula for $\bar \nabla$ with respect to  the coordinates that  identify the space of  $(n\times n)$-matrices $M_{n\times n}(\CC)$  with $\RR^{n^2}$.  Let  $t \mapsto h_t$ be a curve in $\mathcal H$ with $h_0=h$ and $\dot h_0=X(h)$.  We have
\[
d_X(P^{\mathbb C}(Y))=\frac{d}{dt}\Big|_{t=0} \left(h^{-1}_t\, Y(h_t) \right) =h^{-1} \frac{d}{dt}\Big|_{t=0} Y(h_t) - h^{-1} \dot h_0 h^{-1} \, Y(h_0).
\]
Using the embedding $\mathcal H \hookrightarrow M_{n\times n}(\CC)$, we express $h_t=(h^{ij}_t)$.  Furthermore, we can express   $X=X^{ij} \partial_{ij}$ and $Y=Y^{kl} \partial_{kl}$ with respect to the coordinate basis $(\partial_{ij})$. 
Extending  $Y=Y^{kl} \partial_{kl}$ as a vector field on $M_{n\times n}(\CC)$, we apply the chain rule to obtain
\begin{equation} \label{dYdX}
\frac{d}{dt}\Big|_{t=0} Y(h_t)= \frac{d}{dt}\Big|_{t=0} Y^{kl}(h_t) \partial_{kl} =\left( \partial_{ij} Y^{kl}\Big|_h \dot h^{ij}_0 \right) \partial_{kl} = \left(X^{ij} \partial_{ij} Y^{kl} \right)\Big|_h \, \partial_{kl}= \frac{\partial Y}{\partial X}\Big|_h.
\end{equation}
Thus, the formula for the flat connection at the point $h \in \mathcal H$ is
\[
\bar \nabla_X Y = \frac{\partial Y}{\partial X} -Xh^{-1}Y.
\]

The Levi-Civita connection on $T \mathcal H$, denoted by $\nabla$ and  extended complex linearly to $T \mathcal H^{\CC}$, is given at $h \in \mathcal H$ by the formula
\[
\nabla_XY  =\frac{\partial Y}{\partial X} -\frac{1}{2}\left( X h^{-1}Y+ Yh^{-1}X\right).
\]
Indeed:
\\
(i)  $\nabla$ is torsion free:  First, for a function $f$ defined near $h$, 
\begin{eqnarray*}
\left( \frac{\partial Y}{\partial X} - \frac{\partial X}{\partial Y} \right) f & = & 
(X^{ij} \partial_{ij} Y^{kl}) \partial_{kl} f - (Y^{kl}\partial_{kl} X^{ij}) \partial_{ij} f
\\
& = & 
(X^{ij} \partial_{ij} Y^{kl}) \partial_{kl} f + X^{ij}Y^{kl} \partial_{ij} \partial_{kl} f - (Y^{kl} \partial_{kl} X^{ij}) \partial_{ij} f - Y^{kl} X^{ij} \partial_{kl} \partial_{ij} f
\\
& = & X(Y f)-Y(X f) =[X,Y]f.
\end{eqnarray*}
Thus, 
\begin{eqnarray*}
\nabla_X Y- \nabla_Y X & = & \left( \frac{\partial Y}{\partial X}-\frac{1}{2}(Xh^{-1}Y+ Yh^{-1}X) \right)- \left( \frac{\partial X}{\partial Y}-\frac{1}{2}(Yh^{-1}X+ Xh^{-1}Y) \right)
\\
& = & \frac{\partial Y}{\partial X} - \frac{\partial X}{\partial Y} =[X,Y].
\end{eqnarray*} 
(ii) $\nabla$ is metric compatible:  Using the path $t \mapsto h_t$  given above and using (\ref{dYdX}), 
\begin{eqnarray*}
\lefteqn{X g_{\mathcal H}(Y,Z) }
\\
& = & \frac{n}{2} \mathsf{trace}\left(\frac{\partial}{\partial t}\Big|_{t=0} \left(h^{-1}_tY(h_t) h^{-1}_t Z(h_t) \right) \right)
\\
& = &  \frac{n}{2} \mathsf{trace}\left[ \left( 
h^{-1} \frac{\partial Y}{\partial X} -h^{-1}Xh^{-1} Y  \right) h^{-1} Z+ h^{-1}Y \left( h^{-1} \frac{\partial Z}{\partial X} -h^{-1}X h^{-1}Z  \right)\right]
\\
& = &  \frac{n}{2} \mathsf{trace}\left[  
h^{-1} \left(\frac{\partial Y}{\partial X} -\frac{1}{2} \left( Xh^{-1} Y  +Yh^{-1}X \right) \right) h^{-1} Z \right]
\\
& & \ \ +  \frac{n}{2} \mathsf{trace}\left[ h^{-1}Y  h^{-1} \left(\frac{\partial Z}{\partial X} -\frac{1}{2} (X h^{-1}Z+Zh^{-1}X)  \right)\right]
\\
& = & g_{\mathcal H}(\nabla_X Y,Z)+ g_{\mathcal H}(Y, \nabla_X Z).
\end{eqnarray*}
The difference of the flat connection $\bar \nabla$ and the Levi-Civita connection $\nabla$  on $T\mathcal H^{\mathbb C}$ is
\begin{equation} \label{difference}
\left(\bar \nabla_XY- \nabla_XY \right) =\frac{1}{2}\left(Yh^{-1}X-Xh^{-1}Y \right)= -\frac{1}{2}h \left[h^{-1}X, h^{-1}Y\right].
\end{equation}

Let $\hat \nabla=P^{ \mathbb C} \circ \nabla \circ {P^{ \mathbb C}}^{-1}$  denote the pullback to $\mathcal H \times \mathfrak{sl}(n)$  of the Levi-Civita connection $\nabla$ on $T\mathcal H^\mathbb C$ via (\ref{pc}).  Then  the corresponding formula to (\ref{difference}) for the difference between the flat connection $d$ and  $ \hat \nabla$  on $\mathcal H \times \mathfrak{sl}(n) \rightarrow \mathcal H$ is 
\begin{equation} \label{conn}
d_X- \hat \nabla_X  =-\frac{1}{2}\left[h^{-1}X, \  \cdot \ \ \right].
\end{equation}
The bundle $\mathcal H \times \mathfrak{sl}(n)$ pulls back by  $h: \tilde M \rightarrow \mathcal H$ to the trivial $\mathsf{SL}(n,\CC)$-bundle $h^*(\mathcal H \times \mathfrak{sl}(n))$ on the universal cover $\tilde M$.  
\[ \begin{tikzcd}
h^*(\mathcal H \times \mathfrak{sl}(n)) \arrow{r}{} \arrow[swap]{d}{} & \mathcal H \times \mathfrak{sl}(n) \arrow{d}{} \\%
\tilde M \arrow{r}{h}&  \mathcal H
\end{tikzcd}
\]
From (\ref{conn}), the difference of the flat connection $d$ and the pullback $h^*\hat \nabla$    is given by the formula
\begin{eqnarray}\label{ias}
d_V-(h^*\hat \nabla)_V=-\frac{1}{2}\left[h^{-1}dh(V), \  \cdot \ \ \right].
\end{eqnarray}

Next, the pullback to $\tilde M$ of the endormorphism bundle $\mathsf{End}_0(E)$ is isomorphic to the  trivial bundle. 
 Taking the quotient by the induced action from $\rho$,  
$\mathsf{End}_0(E) \simeq   h^*(\mathcal H \times_\rho  \mathfrak{sl}(n)) \rightarrow M$
and the connection $h^*\hat \nabla$ induces a connection  on  $\mathsf{End}_0(E)$ (which we also call $\hat \nabla$).  

From (\ref{ias}), we have
\begin{equation*}\label{conn**}
\hat \nabla=d+\frac{1}{2}\left[h^{-1}dh, \  \cdot \ \ \right].
\end{equation*}
Hence, 
\begin{equation*}\label{conn**}
\hat \nabla=D
\end{equation*}
 by (\ref{DonEnd}).  In other words,   $D$ is the connection on $\mathsf{End}_0(E)$ induced by the Levi-Civita connection on $T^\mathbb C\mathcal H$.

\subsection{Completion of the proof of Theorem~\ref{thm:donaldson}}
\label{completion31}

The bundle isomorphism  ${P^\mathbb C}^{-1}$
 of (\ref{pc}) induces a bundle isomorphism (still denoted by ${P^\mathbb C}^{-1}$)
\begin{eqnarray*}
h^*(\mathcal H \times_\rho \mathfrak{sl}(n))  & \simeq &h^*(T\mathcal H^\mathbb C)  \rightarrow M
\\
\phi & \mapsto & h\phi. \nonumber
\end{eqnarray*}
Also,
\begin{eqnarray} \label{overM}
\hat \nabla={P^\mathbb C}\circ \nabla \circ {P^\mathbb C}^{-1}.
\end{eqnarray}
In particular, since
\[
\theta=-\frac{1}{2} h^{-1} dh \in \Omega^1(M, \mathsf{End}_0(E)) \simeq \Omega^1(M, h^*(\mathcal H \times_\rho \mathfrak{sl}(n))),
\]
we have
\begin{equation}\label{theta***}
{P^\mathbb C}^{-1}\theta=h\theta = -\frac{1}{2} dh \in \Omega^1(M, h^*(T\mathcal H^\mathbb C)).
\end{equation}
Theorem~\ref{thm:donaldson} follows from the  the following implications:
\begin{eqnarray*}
\mbox{$h$ is harmonic}  &  \Rightarrow &  0=-\frac{1}{2}d_\nabla^*dh
=d_\nabla^*h \theta=d_\nabla^*{P^\mathbb C}^{-1}\theta \ \ \mbox{ by  (\ref{theta***})}
\\
& \Rightarrow &
0 = {P^\mathbb C}d_\nabla^*{P^\mathbb C}^{-1}\theta=d_{\hat \nabla}^* \theta = d_D^* \theta  \ \ \mbox{ by  (\ref{conn**}) and (\ref{overM})}.
\end{eqnarray*}

\end{document}